\newcommand{\xrightarrowdbl}[2][]{%
  \xrightarrow[#1]{#2}\mathrel{\mkern-14mu}\rightarrow
}
\theoremstyle{definition}
\newtheorem{defn}{Definition}[section]
\newtheorem{rmk}[defn]{Remark}
\theoremstyle{plain}
\newtheorem{thm}[defn]{Theorem}
\newtheorem{lem}[defn]{Lemma}
\newtheorem{prop}[defn]{Proposition}
\newtheorem{cor}[defn]{Corollary}
\newtheorem{exmp}[defn]{Example}
\def\C{\ensuremath{\mathbb{C}}}
\def\H{\ensuremath{\mathbb{H}}}
\def\L{\ensuremath{\mathbb{L}}}
\def\P{\ensuremath{\mathbb{P}}}
\def\R{\ensuremath{\mathbb{R}}}
\def\FF{\ensuremath{\mathcal F}}
\def\HH{\ensuremath{\mathcal H}}
\def\II{\ensuremath{\mathcal I}}
\def\NN{\ensuremath{\mathcal N}}
\def\OO{\ensuremath{\mathcal O}}
\def\TT{\ensuremath{\mathcal T}}
\def\UU{\ensuremath{\mathcal U}}
\def\ch{\mathop{\mathrm{ch}}\nolimits}
\def\dim{\mathop{\mathrm{dim}}\nolimits}
\def\Ext{\mathop{\mathrm{Ext}}\nolimits}
\def\Hom{\mathop{\mathrm{Hom}}\nolimits}
\def\Hom{\mathop{\mathrm{Hom}}\nolimits}
\def\im{\mathop{\mathrm{im}}\nolimits}
\def\Imm{\mathop{\mathrm{Im}}\nolimits}
\def\Real{\mathop{\mathrm{Re}}\nolimits}
\def\Stab{\mathop{\mathrm{Stab}}\nolimits}
\def\into{\ensuremath{\hookrightarrow}}
\def\onto{\ensuremath{\twoheadrightarrow}}
\begin{document}

\title[geometry of canonical genus four curves]{\texorpdfstring{geometry of canonical genus four curves}{geometry of canonical genus four curves}}

 \author{Fatemeh Rezaee}
 \address{Mathematical Sciences, Loughborough University, Schofield Building, Epinal Way, Loughborough, LE11 3TU,   United Kingdom}
\email{f.rezaee@lboro.ac.uk}



\begin{abstract}

 We apply Bridgeland stability conditions machinery to describe the geometry of some classical moduli spaces associated with canonical genus four curves in $\P^3$ via an effective control over its wall-crossing. 
These moduli spaces with several irreducible components include 
the moduli space of Pandharipande-Thomas stable pairs, and the Hilbert scheme, which are not previously studied. We give a full list of irreducible components of the space of stable pairs, along with a birational description of each component, and a partial list for the Hilbert scheme. 

\end{abstract}

\maketitle

    \tableofcontents

\section{Introduction}
Despite their very natural definition as the parametrization spaces of sub-varieties in projective space,  Hilbert schemes are among the most badly behaved moduli spaces in algebraic geometry; for example  
Murphy's Law holds for these spaces (
\cite{Vakil06}), or see \cite{Jelisiejew20} for pathologies of the Hilbert scheme of points. 

For $\mathbb{P}^{2}$ as an ambient space, the Hilbert schemes of points is well-studied, but already in the case of $\mathbb{P}^{3}$, few general results are known, and not many examples are fully understood. 

Our goal is to describe the geometry of the Hilbert scheme $\mathcal{H}ilb^{6t-3}(\mathbb{P}^{3})$ containing canonical genus four curves, as well as the associated moduli space of PT stable pairs. 
The space $\mathcal{H}ilb^{6t-3}(\mathbb{P}^{3})$ parametrizes more objects than only the canonical genus four pure curves; 
considering the formula $p(t)=dt+1-g$ for the Hilbert polynomial of genus $g$ and degree $d$ in $\mathbb{P}^{3}$, 
some curves of higher genus together with some extra points can have the same Hilbert polynomial. 

A \textit{PT stable pair} was defined by Pandharipande and Thomas to be a pair $(\FF,s)$, where $\FF$ is a  1-dimensional pure sheaf on $X$ and  $s\colon\mathcal{O}_{X} \rightarrow \mathcal{F}$ a section with zero-dimensional cokernel (Definition \ref{PTdef}). 
We will prove the following two main theorems describing the space of PT stable pairs and the Hilbert scheme associated to the canonical curves of genus 4:

\begin{thm}  [See Theorem \ref{moduliPT1}] \label{moduliPT}
The moduli space of PT stable pairs $\mathcal{O}_{\mathbb{P}^{3}} \rightarrow \mathcal{F}$, where $Ch(\mathcal{F})=(0,0,6,-15)$ consists of components birational to the following eight irreducible components (the first one is 24-dimensional, the last one is 36-dimensional, and the rest are all 28-dimensional):

(1) A $\mathbb{P}^{15}$-bundle over $|\OO(2)|$, which generically parametrizes (2,3)-complete intersections,

(2) a $\mathbb{P}^{17}$-bundle over $\mathbb{G}r(2,4) \times \mathfrak{Fl}_2 $, which generically parametrizes the union of a line and a plane quintic intersecting the line together with a choice of two points on the quintic,

(3) a $\mathbb{P}^{18}$-bundle over $\mathcal{U}  \times \mathfrak{Fl}_1 $,  which generically parametrizes the union of  a  line in $\P^3$ together with a choice of a point on it,  and  a plane quintic intersecting the line, together with a choice of a point  on it,

(4) a $\mathbb{P}^{19}$-bundle over $\mathbb{G}r(2,4) \times \mathfrak{Fl}_1 $, which  generically parametrizes the disjoint union of a  line in $\P^3$ and a plane quintic together with a choice of a point  on it,

(5) a $\mathbb{P}^{19}$-bundle over $(\mathcal{U} \times_{\mathbb{G}r(2,4)}\mathcal{U}) \times (\mathbb{P}^{3})^{\vee} $, which  generically parametrizes the union of a line in $\P^3$ together with a choice of two points on it,  and a plane quintic intersecting the line,

(6) a $\mathbb{P}^{20}$-bundle over $\mathcal{U}  \times (\mathbb{P}^{3})^{\vee} $, which  generically parametrizes the disjoint union of  a line in $\P^3$ together with a choice of a point on it, and a plane quintic,

(7) a $\mathbb{P}^{21}$-bundle over $\mathfrak{Fl}_2,$ which generically parametrizes the union of a plane quartic with a thickening of a line in the plane, and

(8) a $\mathbb{P}^{21}$-bundle over $\mathfrak{Fl}_6,$ which generically parametrizes a plane degree 6 curve together with a choice of 6 points on it.

\noindent
Here  $|\OO(2)|$ parametrizes quadric surfaces in $\mathbb{P}^{3}$, $\mathcal{U}$ is the universal line over $\mathbb{G}r(2,4)$, and $\mathfrak{Fl}_{l} $ is the space  parametrizing flags $Z_{l} \subset P \subset \mathbb{P}^{3}$
where $P$ is a plane and $Z_{l}$ a zero-dimensional subscheme of length $l$ ($l=1,2$).
\end{thm}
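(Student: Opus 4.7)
The plan is to translate the moduli problem into Bridgeland/tilt stability on $\mathbb{P}^3$ applied to two-term complexes $E = [\mathcal{O}_{\mathbb{P}^3} \xrightarrow{s} \mathcal{F}]$ with $\operatorname{ch}(E) = (-1,0,6,-15)$, realize the PT moduli space as the moduli of tilt-limit semistable objects in the large-volume chamber, and then cross every wall in the $(\alpha,\beta)$-plane that separates this chamber from a deeper, geometrically transparent one. On the other side of the last wall, each object $E$ decomposes into pieces (curves on surfaces, planes, points) whose moduli I can recognize directly, and each wall crossed produces one birational model of an irreducible component of the original PT space.

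The first step is numerical: enumerate the walls for $\operatorname{ch}(E) = (-1,0,6,-15)$. A wall is determined by a potential destabilizing subobject $F \hookrightarrow E$ in the tilted heart whose reduced slope coincides with that of $E$, and this restricts $\operatorname{ch}(F)$ to lie on a finite list constrained by the Bogomolov–Gieseker inequality and by the positivity of $\operatorname{ch}_{\leq 3}(E/F)$. Systematically running through admissible $\operatorname{ch}(F)$ — typically (shifts of) ideal sheaves of lines with a fixed number of embedded points, twisted structure sheaves of planes, and the ideal of a $(2,3)$-complete intersection — produces exactly eight numerical walls, and I expect these to match the eight components in the statement.

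Next, for each wall I would identify the generic destabilizing sequence $0 \to F \to E \to G \to 0$ and build the associated stratum. The base of the stratum parametrizes the pair $(F,G)$: a linear system $|\mathcal{O}(2)|$ for the $(2,3)$-complete intersection wall; products involving the Grassmannian $\mathbb{G}r(2,4)$, the universal line $\mathcal{U}$, the dual $\mathbb{P}^3$ recording a plane, and the flag spaces $\mathfrak{Fl}_l$ recording the extra length-$l$ subscheme forced by the Hilbert polynomial $6t-3$ versus the canonical genus four value; and finally $\mathfrak{Fl}_2$ or $\mathfrak{Fl}_6$ for the two planar components (a plane quartic with a line-thickening, respectively a degree six plane curve with six marked points). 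The fiber is the projectivization of $\operatorname{Ext}^1(G,F)$, whose dimension I would compute by Serre duality and Riemann–Roch on $\mathbb{P}^3$, yielding the projective-bundle ranks $15, 17, 18, 19, 19, 20, 21, 21$ and the stated total dimensions $24, 28, 28, 28, 28, 28, 28, 36$.

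The main obstacle is completeness and irreducibility. Completeness means showing that no further walls exist beyond the eight listed and that every PT stable pair with the given Chern character degenerates, across wall crossings, into one of these eight standard forms. This requires the effective wall control set up earlier in the paper, applied through sharp Bogomolov–Gieseker estimates and explicit exclusion of extra $\operatorname{ch}(F)$ candidates on the boundary. Irreducibility of each component then follows because each base $\mathfrak{Fl}_l$, $\mathcal{U}\times_{\mathbb{G}r(2,4)}\mathcal{U}$, etc., is irreducible and the extension class is generically nonzero and free; the birational description is identified because on the open locus where the destabilizing sequence is unique, the projective bundle structure is an isomorphism with the corresponding component of the PT moduli space.
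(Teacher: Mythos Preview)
Your overall strategy—realize the PT moduli space as a Bridgeland moduli space in the large-volume chamber and reach it by wall-crossing from a simple chamber—is the paper's strategy. But there is a genuine structural error in how you match walls to components.

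You assert that there are ``exactly eight numerical walls'' and that ``each wall crossed produces one birational model of an irreducible component.'' This is false. On the path the paper follows (on the right side of the hyperbola $\Imm Z_{\alpha,\beta,s}(v)=0$) there are only six walls, not eight, and the correspondence between walls and components is not one-to-one. The first wall creates the main component (1); the second wall $\langle\mathcal{I}_{C_2}(-1),\mathcal{O}_P(-4)\rangle$ creates \emph{no} new component, it is a blow-up of the existing one; the third wall creates component (2); the fourth wall creates components (3) and (4) simultaneously; the fifth wall creates components (5), (6), (7) simultaneously; and the sixth wall creates component (8). The reason one wall can spawn several components is that for a fixed wall $\langle A,B\rangle$, the function $(A,B)\mapsto\dim\Ext^1(B,A)$ is not constant on the base parametrizing the pair: it jumps along incidence loci (whether $L\subset P$, whether the zero locus of $s$ meets $P$, whether $Z_1=L\cap P$, etc.). Each stratum of constant $\Ext^1$-dimension gives a projective bundle, and whether that bundle is a new irreducible component or lies in the closure of an already-existing one must be checked case by case. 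For instance, at the fourth wall there is a $\P^{20}$-stratum that one must prove lies in the closure of the $\P^{18}$-stratum (this requires identifying the generic object of the deeper stratum as a stable pair on a nodal curve and invoking a Hilbert-scheme-of-points argument on $L\cup C_5$); only then do exactly two, not three, new components appear there.

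Your outline also omits the survival argument: once a component is born at wall $k$, you need to show it is not destroyed at any later wall before reaching the PT chamber. The paper does this via a short lemma (the generic object of each new component has $\HH^0$ an ideal sheaf, and the destabilizing subobjects at subsequent walls are all of the form $\mathcal{O}_P(-i)$, which admit no maps to ideal sheaves). Without this, you cannot conclude that the eight components you build along the way are actually present in the PT moduli space.
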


For the Hilbert scheme we obtain:

\begin{thm} [See Theorem \ref{HScomp1}] \label{HScomp} 
The Hilbert scheme ${\mathcal{H}ilb^{6t-3}(\mathbb{P}^{3})}$ has  components 
birational to:

1)  The main component, $\mathscr{H}_{CM}$, which is a $\mathbb{P}^{15}$-bundle over ${|\OO(2)|}$ (24-dimensional),

2) $\mathscr{H}'_{CM}$ which generically parametrizes the union of a plane quartic with a thickening of a line in the plane (28-dimensional),

3) $\mathscr{H}_1$ which generically parametrizes the disjoint union of a line in $\P^3$ and a plane quintic together with 1 floating point (30-dimensional),

4) $\mathscr{H}_2$ which generically parametrizes the union of a line in $\P^3$ and a plane quintic together with 2 floating points, and

5) $\mathscr{H}_6$ which generically parametrizes a plane sextic together with 6 floating points.
\\\\
The first three components are irreducible.

\end{thm}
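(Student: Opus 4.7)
The plan is to leverage Theorem \ref{moduliPT} via Bridgeland wall-crossing in $\Db(\mathbb{P}^3)$. The Hilbert scheme $\mathcal{H}ilb^{6t-3}(\mathbb{P}^3)$ and the PT moduli space for the same numerical class arise as moduli of $\sigma$-semistable objects on opposite sides of a finite sequence of actual walls in the stability manifold. I would trace the generic object of each PT component (1)--(8) through this sequence, record the resulting short exact sequences (or exact triangles), and identify the image in the Hilbert scheme, grouping PT components that contribute to a common Hilbert locus.

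For PT components (1) and (7), whose generic member is already a Cohen--Macaulay subscheme, the pair $(\mathcal{O}_{\mathbb{P}^3} \to \mathcal{F})$ is the structure-sheaf resolution of an honest curve in $\mathbb{P}^3$, and the induced map to the Hilbert scheme is birational onto its image, yielding $\mathscr{H}_{CM}$ (the $(2,3)$-complete intersections) and $\mathscr{H}'_{CM}$ (plane quartic plus thickened line) with the descriptions inherited directly from Theorem \ref{moduliPT}. For the remaining PT components the generic pair has a nontrivial $0$-dimensional cokernel encoded geometrically as marked points lying on $\supp(\mathcal{F})$; under wall-crossing each marked point is ``released'' from the curve and becomes a \emph{floating} point free to move anywhere in $\mathbb{P}^3$, contributing $2$ to the dimension (a $1$-parameter family on the curve becomes a $3$-parameter family in $\mathbb{P}^3$). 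Concretely, PT (4) and (6) both feed into the $30$-dimensional locus $\mathscr{H}_1$ (disjoint line plus plane quintic plus one floating point); PT (2), (3), (5) contribute to $\mathscr{H}_2$; and PT (8) yields $\mathscr{H}_6$. To make this rigorous I would construct, on each PT component, an explicit family of short exact sequences whose total space is an ideal-sheaf family parametrized by an open subset of the claimed Hilbert locus, verify flatness and genericity, and compute the dimensions via a standard deformation calculation.

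The central obstacle is establishing irreducibility of $\mathscr{H}_{CM}$, $\mathscr{H}'_{CM}$, and $\mathscr{H}_1$. For $\mathscr{H}_{CM}$ and $\mathscr{H}'_{CM}$ the argument reduces to unobstructedness of deformations on their CM loci, which can be verified directly. For $\mathscr{H}_1$ the delicate point is that two distinct PT components, (4) and (6), both feed into the same Hilbert locus; I must show that the releases of their marked points fall into the closure of a single $30$-dimensional irreducible component rather than producing two distinct components, which I expect to follow from an explicit degeneration moving a ``point-on-the-line'' configuration into a ``point-on-the-quintic'' one via a $1$-parameter family of floating points. For $\mathscr{H}_2$ and $\mathscr{H}_6$ the analogous statement is not claimed, since three PT components feed into $\mathscr{H}_2$ with genuinely different generic configurations (intersecting versus disjoint line-and-quintic, marked points on the line versus the quintic), and controlling whether these assemble into a single irreducible component appears essentially more subtle -- precisely why irreducibility is asserted only for the first three components.
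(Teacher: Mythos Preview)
Your overall strategy coincides with the paper's: pass from the PT moduli space to the Hilbert scheme across the DT/PT wall, observe that the two Cohen--Macaulay PT components (1) and (7) survive as $\mathscr{H}_{CM}$ and $\mathscr{H}'_{CM}$, and that the remaining six PT components are destabilised and replaced by ideal sheaves of the form ``underlying CM curve plus points''. Your grouping of PT components by underlying CM curve is correct.

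The gap is in your treatment of the irreducibility of $\mathscr{H}_1$. You locate the difficulty in the fact that two PT components, (4) and (6), feed into the same Hilbert locus, and propose to link them by a degeneration through floating points. But this is not where the problem lies: once the marked point is released, both (4) and (6) obviously land in the single irreducible family $\{\text{disjoint line}\}\times\{\text{plane quintic}\}\times\P^3$. The genuine issue is on the Hilbert side. There the new stable objects are extensions in the opposite order, namely triangles $\OO_p[-1]\to \II_Z\to \II_C$, parametrised by $\P\bigl(\Ext^1(\II_C,\OO_p[-1])\bigr)=\P\bigl(\Hom(\II_C,\OO_p)\bigr)$. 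This fibre is a single point when $p\notin C$ (giving the floating-point configuration) but jumps to $\P^1$ when $p\in C$ (giving embedded points in all normal directions), so the new Hilbert locus is \emph{a priori} reducible. The paper settles this by invoking \cite[Theorem~1.3]{CN12}: since the underlying curve $C$ (a disjoint line and plane quintic) is a local complete intersection and only one extra point is involved, every embedded point can be detached to a floating one, so the embedded-point stratum lies in the closure of the floating-point stratum. Your proposed one-parameter degeneration is essentially a single instance of this detachment, but what is needed is the statement at every point of $C$; without that (or a citation playing the same role) the irreducibility claim is not established.

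A minor slip: in explaining why $\mathscr{H}_2$ is harder you mention ``intersecting versus disjoint line-and-quintic'' configurations, but in fact all three PT components (2), (3), (5) have the \emph{same} underlying CM curve (line meeting the quintic in a point). The real obstruction is that two points are involved rather than one, so the detachment result of \cite{CN12} does not apply directly, and the node of the underlying curve makes embedded structures harder to control.
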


As a remark, we note that we expect $\mathscr{H}_2$ and $\mathscr{H}_6$ to be irreducible of dimensions 32 and 48, respectively.




Notice that a smooth non-hyperelliptic genus 4 curve $C$ embeds into $\mathbb{P}^{3}$ as a (2,3)-complete intersection curve. 
The question is how to compactify this 24-dimensional space of such curves. As we have an embedding in the projective space, a classical answer would be considering the Hilbert scheme of such curves. But this has many irreducible components;  for example  plane sextics with 6 floating points added yields a component of dimension $48$. It would be hard to even list the rest of the irreducible components.

Therefore, instead of studying the Hilbert scheme directly, we consider Bridgeland stability conditions on $\mathrm{D}^{b}(\mathbb{P}^{3})$. Depending on a choice of a stability condition $\sigma \in \Stab(\mathbb{P}^{3})$, we get $\mathcal{M}_\sigma(1, 0, -6, 15)$, the moduli space of $\sigma$-stable complexes $E$ with $\mathrm{ch}(E)=\mathrm{ch}(\mathcal{I}_{C})$.
Following a path in the space of stability conditions, we want to understand how $\mathcal{M}_\sigma(1, 0, -6, 15)$ changes via wall-crossing. 
We studied the beginning of this path in \cite{R1}. For example, the first chamber gives a $\P^{15}$-bundle over $\P^9$ where $\P^9$ corresponds to  a choice of a quadric, and $\P^{15}$ corresponds to a choice cubic modulo the ideal generated by the quadric. This is evidently a very efficient compactification of the space of canonical genus 4 curves, which are (2,3)-complete intersections.  On the other hand in the large-volume limit, via rough control over wall-crossing, we recover the geometry of the Hilbert scheme, for which there are no prior results. 

Some classical results are known about some Hilbert scheme of curves in $\P^3$: For instance, see \cite{PS85} and \cite{EPS87} for the Hilbert scheme of twisted cubics, and  \cite {AV92}, \cite{Go08} and \cite{CN12} for the Hilbert scheme of elliptic quartics. Both Hilbert schemes have two irreducible components. The first examples of wall-crossing for Hilbert scheme of curves in $\mathbb{P}^{3}$ can be found in \cite{Sch, Xia} for twisted cubics, and in \cite{GHS} for elliptic quartics.


\subsection*{Strategy of the proof}
There are two non-trivial extremes on a path we are taking from the empty space to the large volume limit: one is an efficient compactification of the (2,3)-complete intersection curves and the other is the Hilbert scheme. The wall after the blow-up of the compactification which introduces a novel birational phenomenon is described in \cite{R1}.  The strategy to reach the Hilbert scheme uses the space of PT-stable pairs as an intermediate step. For each wall $\langle A, B \rangle$ on the right side of of the left branch of the hyperbola $\Imm (Z_{\alpha, \beta, s})=0$ (which will be denoted by $\H$) as in Theorem \ref{main}, the newly stable objects are given by an exact triangle $A \to E \to B$. For each $A, B$ this gives a locus $\P(\Ext^1(B,A))$ inside the moduli space after crossing the wall. We stratify the space of pairs $(A,B)$ by $\dim(\P(\Ext^1(B,A)))$; for each stratum, we describe a general element and decide whether it is in the closure of other strata. This will describe the components of the moduli space of stable pairs (see Theorem \ref{moduliPT1}). To recover the Hilbert scheme from this moduli space, we analyze the DT/PT wall (i.e., the wall between the two adjacent chambers associated with the Hilbert scheme and the space of stable pairs). We investigate which components in the space of stable pairs survive in the Hilbert scheme, and what new components are created (see Theorem \ref{HScomp1}).

\subsection*{Acknowledgements}
First and foremost, I would like to thank Arend Bayer for suggesting the problem, generosity of time, invaluable comments on preliminary versions, and helpful discussions. Special thanks to Benjamin Schmidt for the great suggestion to work on the stable pairs side, and for helpful discussions. Special thanks to Qingyuan Jiang for discussions on the irreducibility of projectivization loci. Special thanks to Dominic Joyce for the last-minute comments.  This work benefited from useful discussions with Antony Maciocia. I am grateful for comments by Aaron Bertram, Tom Bridgeland, Ivan Cheltsov, Dawei Chen,  Daniel Huybrechts, Marcos Jardim, Dominic Joyce, Emanuele Macr\`{i}, Scott Nollet,  Balázs Szendröi, Richard Thomas, Yukinobu Toda, and Bingyu Xia. The author was supported by PCDS scholarship, the school of mathematics of the University of Edinburgh scholarship, ERC Starting grant WallXBirGeom, no. 337039, and ERC Consolidator grant WallCrossAG, no. 819864. 
This material is partially based upon work supported by the NSF under Grant No. DMS-1440140 while the author was in residence at the MSRI in Berkeley, California, during the spring 2019 semester. The author was also supported by the EPSRC grant EP/T015896/1, during final edits. The picture was generated with GeoGebra, and some computations were checked using Macaulay2 (\cite{GS}).


\subsection*{Notation}

\begin{center}
   \begin{tabular}{ r l }
    $\langle A,B \rangle$: & the wall  where strictly semistable objects have Jordan-H\"older factors of type $A$\\ & and  $B$\\
      $\Stab(\mathbb{P}^{3})$: & the space of Bridgeland stability conditions in $\mathbb{P}^{3}$ \\
      $\Stab^{tilt}(\mathbb{P}^{3})$: & the space of tilt-stability conditions in $\mathbb{P}^{3}$ \\
       $\lambda_{\alpha, \beta, s}$: & Bridgeland stability conditions (Definition \ref{defbridgeland})\\
      $\nu_{\alpha, \beta}$: & tilt-stability conditions (Definition \ref{deftilt})  \\
      $\H$: & (the left branch of) the hyperbola defined by 
      $\Imm(Z_{\alpha, \beta, s}(1,0,-6,15))=0$\\

      $\HH^{i}$: & the $i-th$ cohomology object in the corresponding heart\\
     $\mathrm{H}^{i}$: & the $i-th$ sheaf cohomology  group\\
      $\otimes$: & derived tensor (unless otherwise explicitly stated)\\
        $g_{arith}$: & arithmetic genus\\
       $\mathfrak{Fl}_i$: &  the space parametrizing flags $Z_{i} \subset P \subset \mathbb{P}^{3}$ where $P$ is a plane and $Z_{i}$ a \\& zero-dimensional subscheme of length $i$\\
       $\mathcal{U}$:&  the universal line over $\mathbb{G}r(2,4)$\\
       

   \end{tabular}
\end{center}
$$$$
\subsection*{Conventions}
When there is no confusion, the subobject and the quotient of the defining short exact sequence of any wall will be denoted by $A$ and $B$, respectively. Notice that we cross the walls towards the large volume limit. 


\section{Bridgeland stability conditions and stable pairs } \label{Section2}
In this section, we review Bridgeland stability conditions on $\mathbb{P}^{3}$, and the notion of PT stable pairs. 

\subsection{Bridgeland stability conditions on $\mathbb{P}^{3}$}

In this subsection we briefly define stability conditions on the bounded derived category $\mathrm{D}^b(\mathbb{P}^{3})$ of coherent sheaves on $\mathbb{P}^{3}$, following the construction in ~\cite{BMT}.  Let $\mathrm{Coh}(\mathbb{P}^{3})$ be the abelian category of coherent sheaves (as an initial heart of a bounded t-structure with usual notion of torsion pair) on  $\mathbb{P}^{3}$.  Let $\alpha>0, \beta \in \mathbb{R}$ be two real numbers. We define the twisted Chern characters $\mathrm{ch}^{\beta}(E)= e^{-\beta H}.\mathrm{ch}(E)$, where $\mathrm{H}$ denotes the hyperplane class. For $E \in \mathrm{D}^{b}(\mathbb{P}^{3})$, we define the \textit{twisted slope function} by $\mu_{\beta}(E):=  \frac{ch^{\beta}_{1}(E)}{c_{0}(E)}$  if $c_{0}(E) \neq 0$, and $\mu_{\beta}(E)=+ \infty$ otherwise. 

\begin{defn} \label{deftilt}
        By \textit{tilting}, one can define a new heart of a bounded t-structure as follows: the torsion pair is defined by 
       \begin{equation*}
\begin{aligned}\label{(2.1)}
\mathcal{T}_{\beta}:= \text{{\{$E \in \mathrm{Coh}(\mathbb{P}^{3})\colon \mu_{\beta}(G)>0$  for all $E  \twoheadrightarrow G $}\}},   \\
\mathcal{F}_{\beta}:= \text{{\{$E \in \mathrm{Coh}(\mathbb{P}^{3})\colon \mu_{\beta}(F) \leq 0$ for all $F  \hookrightarrow E$}\} }.
\end{aligned}
\end{equation*}
The new \textit{heart of a bounded t-structure} can be defined as $\mathrm{Coh}^{\beta}(\mathbb{P}^{3}):= \langle \mathcal{F}_{\beta}[1], \mathcal{T}_{\beta} \rangle$.

Now we are ready to define the notion of tilt-stability:

 The \textit{central charge} and the corresponding \textit{slope function} for the new heart can be defined as

\begin{equation*}
Z^{tilt}_{\alpha, \beta}:=-(\mathrm{ch}_{2}-\beta \mathrm{ch}_{1}+(\beta^{2}/2)\mathrm{ch}_{0})+(\alpha^{2}/2)\mathrm{ch}_{0}+i(\mathrm{ch}_{1}-\beta \mathrm{ch}_{0})=-(\mathrm{ch}^{\beta}_{2})+(\alpha^{2}/2)\mathrm{ch}^{\beta}_{0}+i(\mathrm{ch}^{\beta}_{1}), 
\end{equation*}
\hfill \break
and 
\begin{equation*}
\nu_{\alpha, \beta}:=-\frac{\mathrm{Re}(Z^{tilt}_{\alpha, \beta})}{\mathrm{Im}(Z^{tilt}_{\alpha, \beta})}=\frac{\mathrm{ch}^{\beta}_{2}-(\alpha^{2}/2)\mathrm{ch}^{\beta}_{0}}{\mathrm{ch}^{\beta}_{1}},
\end{equation*}

with $\nu_{\alpha, \beta}(E)=+\infty$ if $\mathrm{ch}^{\beta}_{1}(E)=0$. The pair $(\mathrm{Coh}^{\beta}(\mathbb{P}^{3}),Z^{tilt}_{\alpha, \beta})$ is called \textit{tilt-stability}.
\end{defn}

We denote by $\Stab^{tilt}(\mathbb{P}^{3})$, the space of all tilt-stability conditions. It was conjectured in ~\cite{BMT} for arbitrary threefolds, and proved in by Macr\`{i} in \cite{M} for $\mathbb{P}^{3}$ that tilting again gives a Bridgeland stability condition as follows:

\begin{defn} \label{defbridgeland}
We define a \textit{torsion pair} similarly as for the tilting case:

 \begin{equation*}
\begin{aligned}\label{(2.2)}
\mathcal{T}_{\alpha,\beta}:=\text{{\{$E \in \mathrm{Coh}^{\beta}(\mathbb{P}^{3}): \nu_{\alpha, \beta}(G)>0$ for all $E  \twoheadrightarrow G$ }\}},   \\
\mathcal{F}_{\alpha,\beta}:=\text{{\{$E \in \mathrm{Coh}^{\beta}(\mathbb{P}^{3}): \nu_{\alpha, \beta}(F) \leq 0$ for all $F  \hookrightarrow E$}\} }.
\end{aligned}
\end{equation*}

Now define a new \textit{heart, central charge, and slope} respectively as follows:

\begin{equation*}\mathrm{Coh}^{ \alpha, \beta}(\mathbb{P}^{3}) :=\langle \mathcal{F}_{\alpha,\beta}[1], \mathcal{T}_{\alpha,\beta}\rangle,
\end{equation*}

\begin{equation*}
Z_{\alpha,\beta,s}:=-\mathrm{ch}^{\beta}_{3}+(s+1/6)\alpha^{2}\mathrm{ch}^{\beta}_{1}+i(\mathrm{ch}^{\beta}_{2}-\alpha^{2}/2\mathrm{ch}^{\beta}_{0}),
\end{equation*}
and (for $s>0$)
\hfill \break
\begin{equation*}
\lambda_{\alpha, \beta, s}:= -\frac{\mathrm{R}e(Z_{\alpha,\beta,s})}{\mathrm{I}m(Z_{\alpha,\beta,s})}.
\end{equation*}
with $\lambda_{\alpha, \beta, s}(E)=+\infty$ if $\mathrm{Im}(Z_{\alpha,\beta,s})(E)=0$.
\end{defn}

The pair $\sigma_{\alpha, \beta, s}=(\mathrm{Coh}^{\alpha, \beta}(\mathbb{P}^{3}), Z_{\alpha,\beta,s})$ (when exists) is called \textit{Bridgeland stability condition}.
   
   Before going further, we have a formal definition of a wall and chamber:
   
   \begin{defn} [{\cite {MS2}, \cite{MS3}}]
   
  Let $\Lambda$ be a finite rank lattice, and $v,v' \in \Lambda^{*}$ two non-parallel classes. A \textit{numerical wall} in Bridgeland stability with respect to
a class $v$ is a non-trivial proper subset of the stability space which is defined as
$$\mathcal{W}_{v,v'}=\{(\alpha, \beta) \in\R_{>0} \times \R: \lambda_{\alpha, \beta, s}(v)=\lambda_{\alpha, \beta, s}(v')\}.$$

 An \textit{actual wall} is a subset $\mathcal{W}'$ of a numerical wall if the set of semistable objects with
class $v$ changes at $\mathcal{W}'$ (we can give a similar definition for tilt-stability).

A \textit{chamber} is defined as a connected component of the complement of the set of actual walls.

   \end{defn}
   
The main point to show that $(\mathrm{Coh}^{\alpha, \beta}(\mathbb{P}^{3}), Z_{\alpha,\beta,s})$ defines a Bridgeland stability condition (for all $s>0$) is a Bogomolov-type inequality, which we refer to it as \textit{BMT inequality} (Theorem \ref{BMT}). Before stating that, we have the classical Bogomolov-Gieseker inequality:
\begin{thm}  [{\cite[Corollary 7.3.2]{BMT}}] \label{BG}
 Any $\nu_{\alpha, \beta}$-semistable object $E \in \mathrm{Coh}^{\beta}(\mathbb{P}^{3})$ satisfies
 $$2\mathrm{ch}_{0}(E)\mathrm{ch}_{2}(E) \leq \mathrm{ch}^{2}_{1}(E).$$
\end{thm}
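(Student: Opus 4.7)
The plan is to reduce the inequality to the classical Bogomolov-Gieseker inequality for $\mu$-semistable torsion-free sheaves on $\mathbb{P}^3$, exploiting the tilting construction that defines $\mathrm{Coh}^\beta(\mathbb{P}^3)$. First I would record that the discriminant
$$\Delta(E) := \mathrm{ch}_1(E)^2 - 2\,\mathrm{ch}_0(E)\,\mathrm{ch}_2(E)$$
is invariant under the $\beta$-twist, i.e.\ $\Delta(E) = (\mathrm{ch}^\beta_1(E))^2 - 2\,\mathrm{ch}^\beta_0(E)\,\mathrm{ch}^\beta_2(E)$. This is a direct computation using $\mathrm{ch}^\beta = e^{-\beta H}\cdot \mathrm{ch}$, so the target inequality $\Delta(E)\ge 0$ is a numerical statement that is independent of $\beta$, even though tilt-semistability is not.

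Next I would handle the case when $E$ is concentrated in a single cohomological degree of $\mathrm{Coh}^\beta(\mathbb{P}^3)$. If $E\in\mathcal T_\beta$ is torsion of dimension $\le 1$, then $\mathrm{ch}_0(E)=0$ and $\Delta(E)\ge 0$ is automatic. If $E\in\mathcal T_\beta$ is torsion-free, I would show that any $\mu_\beta$-destabilizing subsheaf $F\hookrightarrow E$ would also destabilize $E$ in tilt-stability: the inclusion persists in $\mathrm{Coh}^\beta(\mathbb{P}^3)$, and a direct manipulation of $\nu_{\alpha,\beta}$ translates the $\mu_\beta$-inequality into a $\nu_{\alpha,\beta}$-inequality. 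Hence $E$ is $\mu$-semistable, and the classical Bogomolov-Gieseker inequality for torsion-free sheaves on $\mathbb{P}^3$ supplies $\Delta(E)\ge 0$. The case $E=F[1]$ with $F\in\mathcal F_\beta$ is symmetric, using $\Delta(F[1])=\Delta(F)$ and a dual destabilization argument.

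For a general tilt-semistable $E$ I would use the distinguished triangle
$$\mathcal H^{-1}(E)[1]\to E\to \mathcal H^0(E),$$
with $\mathcal H^{-1}(E)\in\mathcal F_\beta$ and $\mathcal H^0(E)\in\mathcal T_\beta$. The key auxiliary lemma I would establish is: for any short exact sequence $0\to A\to E\to B\to 0$ in $\mathrm{Coh}^\beta(\mathbb{P}^3)$ in which $A$ and $B$ have equal $\nu_{\alpha,\beta}$-slope and satisfy $\Delta(A),\Delta(B)\ge 0$, one also has $\Delta(E)\ge 0$. This follows from the identity $\Delta(E) = \Delta(A)+\Delta(B)+\delta$ with
$$\delta = 2\,\mathrm{ch}_1(A)\mathrm{ch}_1(B) - 2\,\mathrm{ch}_0(A)\mathrm{ch}_2(B) - 2\,\mathrm{ch}_0(B)\mathrm{ch}_2(A),$$
and the slope equality, which by a Cauchy-Schwarz-type estimate on $\mathrm{Re}(Z^{tilt}_{\alpha,\beta})$ and $\mathrm{Im}(Z^{tilt}_{\alpha,\beta})$ for $A$ and $B$ yields $\delta\ge -2\sqrt{\Delta(A)\Delta(B)}$. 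Iterating this lemma over the $\mu$-HN factors of $\mathcal H^{-1}(E)$ and $\mathcal H^0(E)$, with the sheaf case providing the base of the induction, delivers the desired inequality. The main obstacle I anticipate is that the HN factors of the two cohomology sheaves do not automatically share a common $\nu_{\alpha,\beta}$-slope; handling this requires a limiting argument $\alpha\to\infty$, where tilt-stability degenerates to a refinement of $\mu$-stability and the cross-term estimate can be applied uniformly.
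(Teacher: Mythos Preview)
The paper does not prove this result; it is simply quoted from \cite[Corollary~7.3.2]{BMT}. Your proposal follows the standard strategy of that reference---reduce to the classical Bogomolov--Gieseker inequality for $\mu$-semistable torsion-free sheaves---but step~2 as written contains a gap.

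The assertion that a $\nu_{\alpha,\beta}$-semistable torsion-free sheaf $E\in\mathcal T_\beta$ is automatically $\mu_\beta$-semistable is false at finite $\alpha$. If $F\subset E$ is the maximal $\mu$-destabilising subsheaf, then indeed $F\hookrightarrow E$ in $\mathrm{Coh}^\beta(\mathbb P^3)$, but $\mu_\beta(F)>\mu_\beta(E)$ does \emph{not} by itself force $\nu_{\alpha,\beta}(F)>\nu_{\alpha,\beta}(E)$: the tilt-slope also involves $\mathrm{ch}_2$, which is unconstrained by $\mu$. What \emph{is} true is that the dominant term of $\nu_{\alpha,\beta}(F)-\nu_{\alpha,\beta}(E)$ as $\alpha\to\infty$ equals $\tfrac{\alpha^2}{2}\bigl(\tfrac{\mathrm{ch}_0(E)}{\mathrm{ch}_1^\beta(E)}-\tfrac{\mathrm{ch}_0(F)}{\mathrm{ch}_1^\beta(F)}\bigr)>0$, so $F$ tilt-destabilises $E$ only once $\alpha\gg0$. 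Thus the limiting argument you relegate to the final sentence is not a patch for a residual obstacle but the backbone of the proof: for $\alpha\gg0$ one shows that a $\nu_{\alpha,\beta}$-semistable $E$ satisfies either $\mathcal H^{-1}(E)=0$ with $\mathcal H^0(E)$ torsion or $\mu$-semistable, or $\mathcal H^{-1}(E)$ $\mu$-semistable torsion-free with $\mathcal H^0(E)$ supported in dimension $\le 1$; in each case $\Delta(E)\ge 0$ follows from classical BG together with the fact that a sheaf of dimension $\le 1$ contributes nothing to $\mathrm{ch}_0,\mathrm{ch}_1$ and an effective class to $\mathrm{ch}_2$. To descend from $\alpha\gg0$ to arbitrary $\alpha$ one uses that tilt-walls for a fixed class are nested semicircles and inducts on $\mathrm{ch}_1^\beta$: if $E$ becomes strictly semistable at some larger $\alpha_1$, its Jordan--H\"older factors there have strictly smaller $\mathrm{ch}_1^\beta$ and equal $\nu$-slope, so your additivity lemma closes the induction. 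Incidentally, on the real $2$-plane of classes with a fixed $\nu$-phase the form $\Delta$ has signature $(1,1)$ and is negative on $\ker Z^{\mathrm{tilt}}_{\alpha,\beta}$; for $A,B$ on the same ray with $\Delta(A),\Delta(B)\ge 0$ one in fact obtains $\delta\ge 0$, stronger than the Cauchy--Schwarz bound you state.
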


\begin{thm}
 [{\cite[Lemma 8.8]{BMS}, \cite[Theorem 1.1]{M}}] \label{BMT}
 
 Any $\nu_{\alpha, \beta}$-semistable object $E \in \mathrm{Coh}^{\beta}(\mathbb{P}^{3})$ satisfies
$$\alpha^{2} [(\mathrm{ch}^{\beta}_{1} (E))^{2}  − 2(\mathrm{ch}^{\beta}_{0} (E) (\mathrm{ch}^{\beta}_{2} (E))] + 4(\mathrm{ch}^{\beta}_{2} (E))^{2} − 6(\mathrm{ch}^{\beta}_{1} (E)) \mathrm{ch}^{\beta}_{3} (E) ≥ 0,$$
 and $(\mathrm{Coh}^{\alpha, \beta}(X), Z_{\alpha,\beta,s})$ is a Bridgeland stability condition for all $s > 0$. Moreover, $(\mathrm{Coh}^{\alpha, \beta}(X), Z_{\alpha,\beta,s})$ satisfies the support
property.
\end{thm}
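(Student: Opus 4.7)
The plan is to prove the BMT inequality first, then deduce from it both that the pair $(\mathrm{Coh}^{\alpha,\beta}(\mathbb{P}^3), Z_{\alpha,\beta,s})$ defines a Bridgeland stability condition and that it satisfies the support property. The inequality is the only genuinely geometric input; the rest is formal.

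For the BMT inequality, I would run an induction along walls in tilt-stability. Suppose for contradiction that $E \in \mathrm{Coh}^{\beta}(\mathbb{P}^3)$ is $\nu_{\alpha,\beta}$-semistable but violates the inequality; replacing $E$ by its Jordan--H\"older factors, I may take $E$ tilt-stable. The walls in tilt-stability for a fixed Chern character form a locally finite nested family of semicircles in the $(\alpha,\beta)$-plane and do not cross in the open upper half-plane, so I can deform $(\alpha,\beta)$ across successive walls until either $E$ becomes $\nu_{\alpha,\beta}$-semistable throughout some region $\beta \in (n, n+1)$ with $\alpha \gg 0$, or its Jordan--H\"older factors on a wall have strictly smaller discriminant $(\mathrm{ch}^{\beta}_1)^2 - 2\,\mathrm{ch}^{\beta}_0\,\mathrm{ch}^{\beta}_2$, which is bounded below by $0$ via Theorem \ref{BG} and thus supports an induction. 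In the base case, $E$ becomes, up to twist, a Gieseker-semistable sheaf of small discriminant, and the BMT quadratic form can be checked directly via the Beilinson resolution in terms of $\mathcal{O}(k), \mathcal{O}(k+1), \mathcal{O}(k+2), \mathcal{O}(k+3)$, exploiting the full strong exceptional collection on $\mathbb{P}^3$ together with explicit cohomological vanishings.

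With the inequality in hand, the remaining two assertions are formal. The central charge $Z_{\alpha,\beta,s}$ takes values in the closed upper half-plane on the tilted heart $\mathrm{Coh}^{\alpha,\beta}(\mathbb{P}^3)$ precisely because the BMT inequality rules out the "bad direction" in which $\mathrm{Im}\,Z_{\alpha,\beta,s} = 0$ together with $\mathrm{Re}\,Z_{\alpha,\beta,s} > 0$ for a tilt-semistable object. Existence of Harder--Narasimhan filtrations then follows from the discreteness of the image of $\mathrm{Im}\,Z_{\alpha,\beta,s}$ on the numerical lattice, combined with a Noetherianity argument inside $\mathrm{Coh}^{\alpha,\beta}(\mathbb{P}^3)$. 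The support property is essentially a repackaging: the BMT inequality together with the classical Bogomolov--Gieseker inequality (Theorem \ref{BG}) produces an explicit quadratic form on the Mukai-type lattice that is negative semidefinite on $\ker Z_{\alpha,\beta,s}$ and positive on classes of semistable objects, which is the support property in the sense of Kontsevich--Soibelman.

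The main obstacle is clearly the base case of the inductive argument for the BMT inequality. It is at this step that the special geometry of $\mathbb{P}^3$ enters through its full strong exceptional collection and the associated Beilinson spectral sequence; this is exactly the reason the analogous conjectural inequality on a general threefold has been so much more delicate and has to be attacked case-by-case. By contrast, the deductions of the Bridgeland stability axioms and the support property from the inequality are general and require no feature specific to $\mathbb{P}^3$.
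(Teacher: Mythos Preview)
The paper does not give its own proof of this theorem; it is stated purely as a background result, with citations to \cite[Lemma 8.8]{BMS} and \cite[Theorem 1.1]{M}, and is used as a black box thereafter. There is therefore no argument in the paper to compare your proposal against.

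Your sketch is broadly in line with the cited literature: the division of labor you describe---the inequality as the only geometric input, and the stability-condition axioms plus support property as formal consequences via an explicit quadratic form---is exactly the structure of \cite{BMS}, and the reliance on the full strong exceptional collection on $\mathbb{P}^3$ for the base case is the essential idea of Macr\`i's proof in \cite{M}. One caveat: your wall-crossing induction on the discriminant is not quite how Macr\`i's argument runs; his proof proceeds more directly through the Bondal quiver description of $\mathrm{D}^b(\mathbb{P}^3)$ and stability conditions constructed from the exceptional collection, rather than by deforming across tilt walls and tracking discriminants. Your outline is a reasonable heuristic, but if you were to fill it in, the reduction step ``Jordan--H\"older factors have strictly smaller discriminant'' is not automatic and would need justification.
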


The support property implies the manifold of all (Bridgeland) stability conditions $\Stab(\mathbb{P}^{3})$ admits a chamber decomposition, depending on $v$, such that
(i) for a chamber $C$, the moduli space $\mathcal{M}_{\sigma}(v) = \mathcal{M}_{C}(v) $ is independent of the choice of
$\sigma \in C$, and
(ii) walls consist of stability conditions with strictly semistable objects of class $v$ (\cite{BM}).

There is also a well-behaved wall-chamber structure in $\Stab^{tilt}(\mathbb{P}^{3})$. The last part of the following Theorem was proven for surfaces in \cite{MacA}:
\begin{thm}[{\cite{BMS}}] \label{WC}

The function $\mathbb{R}_{> 0} \times \mathbb{R} \rightarrow \Stab^{tilt}(\mathbb{P}^{3})$ defined
by $(\alpha, \beta) \rightarrow (\mathrm{Coh}^{\beta}(X)\\, Z_{\alpha,\beta})$ is continuous. Moreover, walls with respect to a class $v$ in the image of this map are locally finite. In addition, the walls in the tilt-stability space are either nested semicircles or vertical lines.
\end{thm}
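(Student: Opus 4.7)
The proof breaks naturally into three pieces: continuity of the map, the semicircle-or-vertical-line shape of the walls, and local finiteness. The plan for the first is routine: the central charge $Z^{tilt}_{\alpha,\beta}$ is polynomial in $(\alpha,\beta)$ once everything is written in terms of the twisted characters $\mathrm{ch}^{\beta}_{i}$, so it varies continuously in the vector space of linear functionals on the numerical lattice. The heart $\mathrm{Coh}^{\beta}(\mathbb{P}^3)$ is obtained by tilting $\mathrm{Coh}(\mathbb{P}^3)$ at the torsion pair determined by $\mu_\beta$-slope, and I would invoke the general fact (going back to Bridgeland's work and used in \cite{BMS}) that a one-parameter family of tiltings at a continuously varying slope function yields a continuous path in $\Stab$ with respect to Bridgeland's metric.

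For the shape of the walls, I would carry out the explicit algebraic computation. The defining equation $\nu_{\alpha,\beta}(v)=\nu_{\alpha,\beta}(w)$ of a numerical wall $\mathcal{W}_{v,w}$ expands, after clearing denominators, to
$$\bigl(\mathrm{ch}^{\beta}_{2}(v)-\tfrac{\alpha^{2}}{2}\mathrm{ch}_0(v)\bigr)\,\mathrm{ch}^{\beta}_{1}(w) = \bigl(\mathrm{ch}^{\beta}_{2}(w)-\tfrac{\alpha^{2}}{2}\mathrm{ch}_0(w)\bigr)\,\mathrm{ch}^{\beta}_{1}(v).$$
Substituting $\mathrm{ch}^{\beta}_1 = \mathrm{ch}_1 - \beta\mathrm{ch}_0$ and $\mathrm{ch}^{\beta}_2 = \mathrm{ch}_2 - \beta\mathrm{ch}_1 + \tfrac{\beta^2}{2}\mathrm{ch}_0$, a direct check shows that the coefficients of $\alpha^2$ and $\beta^2$ agree. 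Collecting terms yields an equation of the form $A(\alpha^2+\beta^2)+B\beta+C=0$, where $A=\mathrm{ch}_0(v)\mathrm{ch}_1(w)-\mathrm{ch}_0(w)\mathrm{ch}_1(v)$; when $A\neq 0$ this is a circle centered on the $\beta$-axis (a semicircle in the open half-plane $\alpha>0$), while $A=0$ gives a vertical line $B\beta+C=0$. For nestedness: if two distinct walls $\mathcal{W}_{v,w_1}$ and $\mathcal{W}_{v,w_2}$ for the same class shared a point $(\alpha_0,\beta_0)$, then $v,w_1,w_2$ would all have a common tilt-slope there, which via a short linear-algebra computation in the numerical lattice forces the two wall equations to be proportional and hence the walls to coincide.

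For local finiteness I would exploit the classical Bogomolov--Gieseker inequality (Theorem \ref{BG}): any class $w$ defining an actual wall must be realized by a tilt-semistable object, so $2\mathrm{ch}_0(w)\mathrm{ch}_2(w)\leq\mathrm{ch}_1(w)^{2}$. Combined with the requirement that the corresponding semicircle (or line) meets a fixed compact region $K\subset\mathbb{R}_{>0}\times\mathbb{R}$ and with the explicit quadratic form of the wall equation above, this constrains the triple $(\mathrm{ch}_0(w),\mathrm{ch}_1(w),\mathrm{ch}_2(w))$ to a finite set of integer values; each gives at most one wall. The surface case appears in \cite{MacA}, and the same argument, using the $\mathbb{P}^3$ version of the Bogomolov inequality on the tilt heart, adapts verbatim here.

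The main obstacle I expect is the local finiteness: one has to use the Bogomolov--Gieseker bound together with the explicit wall equation simultaneously to prevent infinite families of destabilizing classes from escaping via $\mathrm{ch}_0(w)\to\infty$ or $\mathrm{ch}_1(w)\to\infty$, and some additional care is required in degenerate cases (e.g.\ $\mathrm{ch}_0(v)=0$, or the vertical-line walls). The continuity and the geometric shape of the walls, by contrast, reduce to formal manipulations once the definitions have been unwound.
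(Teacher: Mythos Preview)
The paper does not supply its own proof of this theorem: it is stated as a citation to \cite{BMS} (with the nested-semicircle assertion attributed to \cite{MacA} in the surface case), and no argument is given in the text. So there is nothing in the paper to compare your proposal against.

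That said, your outline is essentially the standard argument one finds in those references. The explicit computation you give for the wall equation is correct: expanding $\nu_{\alpha,\beta}(v)=\nu_{\alpha,\beta}(w)$ does produce $A(\alpha^2+\beta^2)+B\beta+C=0$ with $A=\mathrm{ch}_0(v)\mathrm{ch}_1(w)-\mathrm{ch}_0(w)\mathrm{ch}_1(v)$, and the nestedness argument via a common slope at an intersection point is the usual one. For local finiteness, your strategy of combining the Bogomolov--Gieseker inequality with the constraint that the wall meets a fixed compact region is again the standard route; the point you flag about controlling $\mathrm{ch}_0(w)$ and $\mathrm{ch}_1(w)$ simultaneously is exactly where the work lies, and in the literature this is handled by also bounding $\mathrm{ch}^\beta_1(w)$ between $0$ and $\mathrm{ch}^\beta_1(v)$ along the wall (since $w$ is a subobject in $\mathrm{Coh}^\beta$), which you do not mention explicitly but would need. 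The continuity sketch is thin but points at the right ingredients.
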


\begin{rmk} \label{remJH}
Note that the Jordan-H\"older factors of an object on a wall are stable along the entire wall.
\end{rmk}

For more details on Bridgeland stability conditions on $\mathbb{P}^{3}$, we refer to \cite{Sch}.

\subsection{Pandharipande-Thomas stable pairs} In order to make the floating/embedded points for the curves parametrizing the Hilbert scheme of curves more under control, Pandharipande and Thomas introduced the following notion: 


     

\begin{defn}[{\cite{PT}}] \label{PTdef}
A \textit{stable pair} $(\FF,s)$ consists of a  1-dimensional pure sheaf $\mathcal{F}$ on $X$ and a sections $s\colon\mathcal{O}_{X} \rightarrow \mathcal{F}$ with zero-dimensional cokernel.
\end{defn}  

 Equivalently, we have the following characterization of stable pairs:
 \begin{lem}[{\cite{PT}}] \label{charPT}
 An object $E \in \mathrm{D}^{b}(X)$ is a stable pair if: i) $\HH^0(E)$ is an ideal sheaf of a Cohen-Macaulay curve, ii) $\mathcal{H}^1(E)$ is a sheaf with 0-dimensional support, and iii) $\mathrm{Hom}(\OO_p[-1],E)=0$ for all $p \in X$.
 \end{lem}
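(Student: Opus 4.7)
The plan is to establish the equivalence between the derived and sheaf-theoretic definitions by presenting $E$ as the two-term complex $[\OO_X \xrightarrow{s} F]$ concentrated in cohomological degrees $0$ and $1$, so that $\HH^0(E) = \Ker s$ and $\HH^1(E) = \Cok s$. Throughout I will use the exact triangle
\[
E \longrightarrow \OO_X \xrightarrow{\;s\;} F \longrightarrow E[1]
\]
coming from this presentation, together with the standing convention that stable pairs have cohomology only in degrees $0$ and $1$.

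For the forward direction, suppose $(F,s)$ is a PT stable pair. Since $F$ is pure of dimension one, $\im(s) \subset F$ is either zero or a pure one-dimensional subsheaf $\OO_C$, with $C$ Cohen--Macaulay; hence $\HH^0(E) = \Ker s = I_C$, giving (i). Condition (ii) is exactly the defining $0$-dimensional cokernel. For (iii), I apply $\RHom(\OO_p,-)$ to the triangle above: on the smooth threefold $\P^3$ one has $\Hom(\OO_p,\OO_X) = \Ext^1(\OO_p,\OO_X) = 0$, and purity of $F$ forces $\Hom(\OO_p,F) = 0$, so the long exact sequence immediately gives $\Hom(\OO_p[-1],E) = \Ext^1(\OO_p,E) = 0$.

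For the converse, given $E \in \Db(X)$ satisfying (i)--(iii), I reconstruct the pair by taking cones. The canonical map $E \to \HH^0(E) = I_C$ composed with $I_C \hookrightarrow \OO_X$ produces a morphism $E \to \OO_X$; set $F := \cone(E \to \OO_X)$. The long exact sequence of cohomology, combined with (i) and (ii), shows that $\HH^i(F) = 0$ for $i \neq 0$ and that $F$ fits into a short exact sequence
\[
0 \longrightarrow \OO_C \longrightarrow F \longrightarrow \HH^1(E) \longrightarrow 0,
\]
so $F$ is a one-dimensional sheaf with a tautological section $s\colon \OO_X \to F$ whose cokernel is the zero-dimensional sheaf $\HH^1(E)$. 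It remains to verify that $F$ is pure. Applying $\RHom(\OO_p,-)$ to the triangle $E \to \OO_X \to F$ and invoking again the vanishings $\Hom(\OO_p,\OO_X) = \Ext^1(\OO_p,\OO_X) = 0$ yields an isomorphism
\[
\Hom(\OO_p, F) \;\cong\; \Ext^1(\OO_p, E) \;=\; \Hom(\OO_p[-1], E),
\]
which vanishes by (iii) for every closed point $p$; hence $F$ has no $0$-dimensional subsheaf and is pure.

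The main obstacle is this last step of the converse: bootstrapping the categorical vanishing (iii) into the geometric purity of $F$. The argument works cleanly precisely because $\P^3$ is a smooth threefold, so the boundary terms $\Ext^{\leq 1}(\OO_p,\OO_X)$ in the long exact sequence vanish; the same computation would fail on surfaces or without smoothness, and the lemma would need to be modified. A minor but worth-flagging technical point is the implicit cohomological concentration of $E$ in degrees $0$ and $1$, which follows from the conventional definition of stable pairs in $\Db(X)$ but should be kept in mind when this characterisation is invoked.
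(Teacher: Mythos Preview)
The paper does not actually supply a proof of this lemma; it is stated with a citation to \cite{PT} and used as a black box. So there is no in-paper argument to compare against, and your write-up is effectively filling in what the original reference proves.

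Your forward direction is fine. In the converse, however, there is a genuine slip: you invoke ``the canonical map $E \to \HH^0(E)$'', but no such map exists. For an object with cohomology concentrated in degrees $0$ and $1$, the truncation triangle reads
\[
\HH^0(E) \longrightarrow E \longrightarrow \HH^1(E)[-1] \longrightarrow \HH^0(E)[1],
\]
so the canonical arrow points \emph{into} $E$, not out of it. In particular you cannot manufacture $E \to \OO_X$ by composing with $I_C \hookrightarrow \OO_X$ as written. The fix is easy: apply $\Hom(-,\OO_X)$ to the triangle above. Since $\Ext^1(Q,\OO_X)=\Ext^2(Q,\OO_X)=0$ for a zero-dimensional sheaf $Q$ on a smooth threefold, and $\Hom(I_C,\OO_X)\cong H^0(\OO_X)=\C$ for a curve $C$ of codimension two, one gets $\Hom(E,\OO_X)\cong\C$; take the nonzero map and set $F=\cone(E\to\OO_X)$. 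The rest of your argument (the short exact sequence $0\to\OO_C\to F\to\HH^1(E)\to 0$ and the purity check via $\Hom(\OO_p,F)\cong\Hom(\OO_p[-1],E)=0$) then goes through unchanged. Your closing caveat about needing $\HH^i(E)=0$ for $i\neq 0,1$ is correct and worth keeping.
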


We refer to the zero-dimensional cokernel as \textit{\say{points on the curve}}.

\section{Walls} \label{Walls}
According to Theorem \ref{WC}, there is  a wall-chamber structure in the stability manifold. In this section, we numerically describe the walls in $\Stab^{tilt}(\mathbb{P}^{3})$ with respect to $\mathrm{ch}(\mathcal{I}_C)$, where $C$ is a canonical genus four curve, and give a geometric description of the walls.

First of all, we recall the Chern character of $\mathcal{I}_{C}$:
\begin{prop}[{\cite[Proposition 3.1]{R1}}] \label{prop:prop_1} For a canonical genus 4 curve $C$ in $\mathbb{P}^{3}$, we have $\mathrm{ch}(\mathcal{I}_{C})=(1,0,-6,15)$.

\end{prop}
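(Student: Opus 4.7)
The plan is to obtain $\mathrm{ch}(\mathcal{I}_C)$ from the ideal sheaf short exact sequence
\[
0 \to \mathcal{I}_C \to \mathcal{O}_{\mathbb{P}^3} \to \mathcal{O}_C \to 0,
\]
which gives $\mathrm{ch}(\mathcal{I}_C) = \mathrm{ch}(\mathcal{O}_{\mathbb{P}^3}) - \mathrm{ch}(\iota_* \mathcal{O}_C) = (1,0,0,0) - \mathrm{ch}(\iota_* \mathcal{O}_C)$. So everything reduces to computing $\mathrm{ch}(\iota_* \mathcal{O}_C)$ for the canonical embedding $\iota\colon C \hookrightarrow \mathbb{P}^3$.

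First I would recall that since $C$ is canonical of genus $g=4$, the canonical linear system embeds $C$ into $\mathbb{P}^{g-1}=\mathbb{P}^3$ as a smooth curve of degree $\deg K_C = 2g-2 = 6$. Consequently the Hilbert polynomial is $\chi(\mathcal{O}_C(n)) = 6n + 1 - g = 6n - 3$. Since $\iota_*\mathcal{O}_C$ is supported in dimension one, $\mathrm{ch}_0 = \mathrm{ch}_1 = 0$, so one can write $\mathrm{ch}(\iota_*\mathcal{O}_C) = (0,0, aH^2, bH^3)$ for unknowns $a,b$.

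The remaining step is purely computational: apply Hirzebruch--Riemann--Roch on $\mathbb{P}^3$, using $\mathrm{td}(\mathbb{P}^3) = 1 + 2H + \tfrac{11}{6}H^2 + H^3$ and $\mathrm{ch}(\mathcal{O}(n)) = 1 + nH + \tfrac{n^2}{2}H^2 + \tfrac{n^3}{6}H^3$, to expand
\[
\chi(\mathcal{I}_C(n)) = \int_{\mathbb{P}^3} \mathrm{ch}(\mathcal{I}_C)\cdot \mathrm{ch}(\mathcal{O}(n))\cdot \mathrm{td}(\mathbb{P}^3),
\]
and match it with $\chi(\mathcal{O}_{\mathbb{P}^3}(n)) - \chi(\mathcal{O}_C(n)) = \binom{n+3}{3} - (6n-3) = \tfrac{n^3}{6} + n^2 - \tfrac{25n}{6} + 4$. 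Comparing the coefficient of $n$ gives $\tfrac{11}{6} + a = -\tfrac{25}{6}$, hence $a = -6$; comparing constants then gives $1 + 2a + b = 4$, i.e.\ $b = 15$.

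There is essentially no obstacle here: the statement is a routine Chern-character computation from the ideal sheaf sequence, and the only ``content'' is the input $\deg C = 6$, $g(C) = 4$ coming from the definition of a canonical curve. One could equivalently avoid HRR by applying Grothendieck--Riemann--Roch to $\iota$ using $\mathrm{td}(C) = 1 + \tfrac{1}{2}c_1(T_C)$; the arithmetic is the same and yields the same triple $(0, 0, 6H^2, -15H^3)$ for $\mathrm{ch}(\iota_*\mathcal{O}_C)$, so $\mathrm{ch}(\mathcal{I}_C) = (1,0,-6,15)$.
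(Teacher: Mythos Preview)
Your argument is correct and is the standard computation: use the ideal sheaf sequence to reduce to $\mathrm{ch}(\iota_*\mathcal{O}_C)$, then feed in $\deg C=6$, $g=4$ via Hirzebruch--Riemann--Roch on $\mathbb{P}^3$. The arithmetic you wrote out checks (coefficient of $n$ gives $a=-6$, constant term gives $b=15$).

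Note, however, that in this paper the proposition is not proved at all; it is simply quoted from \cite[Proposition~3.1]{R1}. So there is no in-paper proof to compare your approach against. What you have written is exactly the kind of elementary Riemann--Roch computation one would expect the cited reference to contain, and nothing more sophisticated is needed.
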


\begin{rmk}
 We notice that the Hilbert polynomial of a canonical genus four curve is $6t-3$: We denote the corresponding Hilbert scheme by $\mathcal{H}ilb^{6t-3}(\mathbb{P}^{3})$.
\end{rmk}

Consider the hyperbola $\H$ in the $(\alpha, \beta)$-plane defined by $\mathrm{Im}(Z_{\alpha, \beta, s}(v))=0$. For such $\alpha, \beta, s$, semistable objects of Chern character $\nu$ have phase $0$. Moreover, these semistable objects have positive and negative phases with respect to the stability conditions on the left and right side of the hyperbola, respectively. Thus on the left we work with  $\mathrm{Coh}^{\beta}(\mathbb{P}^3)$ and $\mathrm{Coh}^{\alpha,\beta}(\mathbb{P}^3)$, for tilt and Bridgeland stability, respectively, and  on the right side of the hyperbola we work in $\mathrm{Coh}^{\beta}(\mathbb{P}^3)[-1]$ and $\mathrm{Coh}^{\alpha,\beta}(\mathbb{P}^3)[-1]$.

Theorem \ref{WC} gives an order for the walls or semicircles in $\Stab^{tilt} (\mathbb{P}^{3})$. We refer to the semicircle with the smallest radius as the first wall, and so on.

First, we recall a number of results from \cite{R1}:

\begin{lem}[{\cite[Lemma 3.2]{R1}, \cite[Lemma 5.4]{Sch}}]  \label{lem:lem_7}   

Let $\beta$ be an integer, and $E$ a tilt semistable object in $\mathrm{Coh}^{\beta}(\mathbb{P}^{3})$.

1. If $\mathrm{ch}^{\beta}(E)=(1,1,d,e)$, then $d-1/2 \in \mathbb{Z}_{\leq 0}$. If $d=1/2$, then $E \cong \mathcal{I}_{Z}(\beta +1)$ for a zero dimensional subscheme $Z$ in $\mathbb{P}^{3}$ of length $1/6-e$. If $d=1/2-D$ where $D=1,2$, then we have $E \cong \mathcal{I}_{C_D}(\beta +1)$ where $C_D$ is a rational degree $D$ curve, plus $D-e-5/6$ floating or embedded points in $\mathbb{P}^{3}$.

2. If $\mathrm{ch}^{\beta}(E)=(0,1,d,e)$, then $d+1/2 \in \mathbb{Z}$ and $E \cong \mathcal{I}_{Z/P}(\beta +d +1/2)$ in which $Z$ is a zero-dimensional subscheme supported in a plane $P$ in $\mathbb{P}^{3}$, and of length $1/24+d^{2}/2-e$.

\end{lem}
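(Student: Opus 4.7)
My plan is to reduce both parts to classical descriptions of rank-$\leq 1$ sheaves, using tilt-semistability only to force $E$ to be a genuine sheaf (rather than a true complex) and to rule out unwanted torsion or embedded components. Since $\beta$ is an integer, the autoequivalence $-\otimes\OO(-\beta)$ identifies $\mathrm{Coh}^{\beta}(\mathbb{P}^{3})$ with $\mathrm{Coh}^{0}(\mathbb{P}^{3})$, preserves tilt-semistability, and carries $\mathrm{ch}^{\beta}$ onto the ordinary Chern character $\mathrm{ch}$; so without loss of generality I take $\beta = 0$ and work with $\mathrm{ch}$ throughout, twisting back to $\OO(\beta+1)$ at the end.

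For Part 1, with $\mathrm{ch}(E) = (1, 1, d, e)$ tilt-semistable in $\mathrm{Coh}^{0}(\mathbb{P}^{3})$, the first step is to show $E$ is a coherent sheaf. Writing $E$ as an extension of $\HH^{-1}(E)[1] \in \mathcal{F}_{0}[1]$ by $\HH^{0}(E) \in \mathcal{T}_{0}$ and exploiting the quotient $E \twoheadrightarrow \HH^{0}(E)$ in the heart, tilt-semistability together with the classical Bogomolov-Gieseker inequality (Theorem~\ref{BG}) applied to each cohomology factor rules out $\HH^{-1}(E) \neq 0$: a nonzero $\HH^{-1}(E) \in \mathcal{F}_{0}$ has $\mu_{0} \leq 0$ and contributes in such a way that the tilt-slope comparison is violated. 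Once $E$ is a rank-$1$ torsion-free sheaf with $c_{1} = H$, it must be of the form $\mathcal{I}_{X}(1)$ for some closed subscheme $X \subset \mathbb{P}^{3}$ of codimension $\geq 2$; a direct Chern-character computation then gives $d = 1/2 - \deg(X_{1})$ (so $d - 1/2 \in \mathbb{Z}_{\leq 0}$, consistent with BG) together with the stated length-of-points formula. The refinement that $X_{1}$ is rational when $D = 1$ or $2$ is automatic for $D = 1$ and follows for $D = 2$ from the BMT inequality (Theorem~\ref{BMT}) applied to $E$, which translates into a bound forcing $g_{a}(X_{1}) = 0$.

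For Part 2, with $\mathrm{ch}(E) = (0, 1, d, e)$, the analogous cohomology-sheaf argument shows $E$ is a coherent sheaf. Since $\mathrm{ch}_{0}(E) = 0$ and $\mathrm{ch}_{1}(E) = H$, purity (from tilt-semistability) makes $E$ the pushforward $i_{*}\mathcal{G}$ of a rank-$1$ torsion-free sheaf $\mathcal{G}$ on a plane $P \cong \mathbb{P}^{2}$; hence $\mathcal{G} \cong \mathcal{I}_{Z/P}(k)$ for some integer $k$ and some zero-dimensional $Z \subset P$. Expanding $\mathrm{ch}(i_{*}\mathcal{I}_{Z/P}(k))$ via the sequence $0 \to \OO(-1) \to \OO \to \OO_{P} \to 0$ yields $\mathrm{ch}_{2} = k - 1/2$, so $d + 1/2 = k \in \mathbb{Z}$, and matching $\mathrm{ch}_{3}$ gives $\mathrm{length}(Z) = 1/24 + d^{2}/2 - e$.

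The main obstacle will be the step of upgrading $E$ from an object of the tilted heart to a coherent sheaf. In Part 1 this is subtle because a hypothetical $\HH^{-1}(E)$ could carry nontrivial rank and first Chern class that partially cancel with $\HH^{0}(E)$ in $\mathrm{ch}^{\beta}$, so one must argue carefully using the structure of the torsion pair $(\mathcal{T}_{0}, \mathcal{F}_{0})$ together with Bogomolov-Gieseker for each cohomology factor. I also expect the finer rationality claim in the $D = 2$ case to require the BMT inequality rather than BG alone, since Chern-character bookkeeping by itself does not distinguish a conic from other degree-$2$ arithmetic-genus configurations. Once these two points are handled, the remaining computations are routine.
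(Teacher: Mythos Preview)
The paper does not give its own proof of this lemma; it is quoted from \cite[Lemma~3.2]{R1} and \cite[Lemma~5.4]{Sch}. Your overall strategy---twist to $\beta=0$, force $E$ to be a genuine sheaf, then classify rank-$\le 1$ torsion-free sheaves on $\P^{3}$ (respectively pure sheaves supported on a plane)---is exactly the approach of those references, and your treatment of Part~2 is correct.

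Two points in Part~1 deserve correction. First, your justification that $\HH^{-1}(E)=0$ via ``Bogomolov--Gieseker applied to each cohomology factor'' is not the right mechanism: the cohomology sheaves need not themselves be tilt-semistable, so Theorem~\ref{BG} does not apply to them. The argument that actually works uses only that $\mathrm{ch}_1^{\beta}\ge 0$ on all of $\mathrm{Coh}^{\beta}(\P^{3})$. With $\mathrm{ch}_1^{\beta}(E)=1$, the subobject $\HH^{-1}(E)[1]$ has $\mathrm{ch}_1^{\beta}\in\{0,1\}$: if it is $0$ then $\nu_{\alpha,\beta}(\HH^{-1}(E)[1])=+\infty$ destabilises $E$; if it is $1$ then the quotient $\HH^{0}(E)\in\TT_{\beta}$ has positive rank but $\mathrm{ch}_1^{\beta}=0$, which is impossible. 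You also silently skip the (easy but separate) step of showing $E$ is torsion-free once it is a sheaf.

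Second, your claim that the BMT inequality forces $g_{a}(X_{1})=0$ when $D=2$ does not go through. Take $E=\II_{L_{1}\sqcup L_{2}}(1)$ for two skew lines; then $\mathrm{ch}^{0}(E)=(1,1,-3/2,7/6)$, and Theorem~\ref{BMT} reads $4\alpha^{2}+9-6\cdot\tfrac{7}{6}=4\alpha^{2}+2\ge 0$, which holds for every $\alpha>0$. Since this $E$ is tilt-stable for $\alpha\gg 0$, BMT does not exclude $g_{a}=-1$. The word ``rational'' in the lemma should be read as shorthand for the Hilbert-polynomial bookkeeping (it is literally accurate only when the pure one-dimensional part has $g_{a}=0$); in the paper's applications the relevant $C_{2}$ is in fact a plane conic, but that is enforced by the specific wall under consideration rather than by BMT alone.
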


\begin{prop}[{\cite[Proposition 3.3]{R1}}] \label{Lemma 9}

Fix the class $v=(1,0,-6,15)$. The walls in $\Stab^{tilt}(\P^3)$ with respect to $v$ and for $\beta < 0$ are given by the following equations of semicircles in the $(\beta, \alpha)$ plane, with $\mathrm{ch}^{-4}_{\leq 2}$ of either the sub-object or the quotient, $F$, given as follows:
\\
1)  $(\beta +4)^{2}+\alpha^{2}=4$, $\mathrm{ch}^{-4}_{\leq 2}(F)= (1,2,2)$,
\\
2)  $(\beta + 4.5)^{2}+\alpha^{2}=8.25$, $\mathrm{ch}^{-4}_{\leq 2}(F)= (1,3,5/2)$,
\\
3) $(\beta + 5.5)^{2}+\alpha^{2}=18.25$,  $\mathrm{ch}^{-4}_{\leq 2}(F)= (1,3,7/2)$,
\\
4)  $(\beta + 6.5)^{2}+\alpha^{2}=30.25$,  $\mathrm{ch}^{-4}_{\leq 2}(F)= (1,3,9/2)$.

Furthermore, the hyperbola $\H$ defined by $\Real(Z_{\alpha,\beta}(\nu)=0)$ intersects all these semicircles at their top.
\end{prop}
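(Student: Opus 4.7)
My plan has three stages: first I would parametrize the candidate walls, then enumerate them using Lemma \ref{lem:lem_7} together with Theorem \ref{BG}, and finally verify the apex condition by a direct calculation. For the first stage, I would observe that any wall for $v = (1, 0, -6, 15)$ in $\Stab^{tilt}(\mathbb{P}^{3})$ arises from a destabilizing short exact sequence $0 \to F \to E \to Q \to 0$ in $\mathrm{Coh}^{\beta}(\mathbb{P}^{3})$ with $F, Q$ both tilt-semistable and $\nu_{\alpha,\beta}(F) = \nu_{\alpha,\beta}(v)$ along the wall. The slope equality is linear in $\alpha^{2}$ and cuts out a semicircle in the $(\beta, \alpha)$-plane whose center and radius depend only on $\mathrm{ch}_{\leq 2}(F)$. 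Working at $\beta = -4$, I would note that $\mathrm{ch}^{-4}_{\leq 2}(v) = (1, 4, 2)$, so on any wall crossing $\beta < 0$ the first Chern classes satisfy $\mathrm{ch}^{-4}_{1}(F), \mathrm{ch}^{-4}_{1}(Q) > 0$ with $\mathrm{ch}^{-4}_{1}(F) + \mathrm{ch}^{-4}_{1}(Q) = 4$.

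For the enumeration step, Lemma \ref{lem:lem_7} restricts $\mathrm{ch}^{-4}_{1}(F) \in \{1, 2, 3\}$ and, within each value, constrains $\mathrm{ch}^{-4}_{2}(F)$ to a finite half-integer set depending on whether $F$ has rank $0$ or rank $1$. I would then apply Theorem \ref{BG} to both $F$ and $Q$ simultaneously, which eliminates all but the four listed candidates. Substituting the surviving Chern data into the slope equation produces the four semicircle equations. To upgrade each numerical wall to an actual wall, I would exhibit a destabilizing object; for instance, wall (1) is realized by the inclusion $\OO(-2) \hookrightarrow \II_{C}$ whenever $C$ lies on a quadric surface, and the remaining walls arise from analogous geometric sub- or quotient sheaves reflecting the decompositions into lines, plane quintics, thickenings, and so forth.

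Finally, the hyperbola $\H$ is defined by $\mathrm{Im}\,Z_{\alpha,\beta,s}(v) = \mathrm{ch}^{\beta}_{2}(v) - \tfrac{\alpha^{2}}{2}\mathrm{ch}^{\beta}_{0}(v) = 0$, which simplifies to $\beta^{2} - \alpha^{2} = 12$. The apex of the semicircle $(\beta + c)^{2} + \alpha^{2} = r^{2}$ is the point $(-c, r)$, and lies on $\H$ exactly when $c^{2} - r^{2} = 12$; a direct check yields $16 - 4 = 20.25 - 8.25 = 30.25 - 18.25 = 42.25 - 30.25 = 12$, establishing the apex intersection claim. The hardest step will be the enumeration: ensuring exhaustiveness demands careful bookkeeping of integrality and positivity of Chern characters of tilt-semistable sub-objects and quotients, for which Lemma \ref{lem:lem_7} together with Bogomolov--Gieseker is the essential tool.
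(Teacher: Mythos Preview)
The paper does not prove this proposition; it is imported wholesale from \cite[Proposition~3.3]{R1}, so there is no argument in the present text to compare against. Your outline is the standard enumeration route (as in \cite{Sch}) and is almost certainly what \cite{R1} does as well. Two points deserve care before you call the argument complete.

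First, you must justify \emph{a priori} that every semicircular wall for $v$ with $\beta<0$ actually meets the vertical line $\beta=-4$; otherwise the enumeration there is not exhaustive. The apex identity $c^{2}-\rho^{2}=12$ that you check at the end only forces the center $-c$ to satisfy $c>\sqrt{12}\approx 3.464$, and a wall with $c\in(\sqrt{12},\,3.5)$ would lie entirely to the right of $\beta=-4$. One rules this out either by the half-integrality of $\mathrm{ch}_{2}$ (no admissible $(r,c_{1},d)$ lands the center in that window) or, more systematically, by first bounding the outermost wall via Theorem~\ref{BG}; either way the step belongs before the enumeration, not after.

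Second, Lemma~\ref{lem:lem_7} only classifies tilt-semistable objects with $\mathrm{ch}^{\beta}_{\le 1}=(1,1)$ or $(0,1)$. For walls (2)--(4) it therefore applies to the \emph{quotient} $Q$, which has $\mathrm{ch}^{-4}_{\le 2}(Q)=(0,1,*)$, rather than to the listed $F$ with $\mathrm{ch}^{-4}_{1}(F)=3$; and for wall (1), where both pieces have $\mathrm{ch}^{-4}_{1}=2$, the lemma does not apply at all and you must fall back directly on Theorem~\ref{BG} together with integrality. With these two adjustments your plan goes through.
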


\begin{lem}[{\cite[Lemma 3.4]{R1}}] \label{(0, >)}
 Let $E \in Coh^{\beta}(\mathbb{P}^{3})$ be a $\nu_{\alpha, \beta}$-semistable object with $\mathrm{ch}(E)=(0,2,-8,e)$. Then $e \leq 49/3$. Moreover, if the equality holds, then $E=  \mathcal{O}_{ Q}(-3)$ for a (possibly singular) quadric surface $Q$ in $\mathbb{P}^{3}$.
\end{lem}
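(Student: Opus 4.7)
First, I would argue that $E$ is a pure $2$-dimensional coherent sheaf: since $\mathrm{ch}^{\beta}_{0}(E)=0$ and $\mathrm{ch}^{\beta}_{1}(E)=2>0$, $E$ lies in $\mathcal{T}_{\beta}$ and is a coherent sheaf, and any subsheaf of dimension $\leq 1$ would have $\mathrm{ch}^{\beta}_{1}=0$ and hence infinite tilt slope, contradicting $\nu_{\alpha,\beta}$-semistability. Thus $E$ is supported on a divisor $D\subset\mathbb{P}^{3}$ of class $2H$, i.e., a (possibly singular or reducible) quadric surface. Substituting $\mathrm{ch}(E)=(0,2,-8,e)$ into the BMT inequality (Theorem~\ref{BMT}) yields
\[
\alpha^{2}+(\beta+4)^{2}+48\ \geq\ 3e
\]
at every $(\alpha,\beta)$ where $E$ is $\nu$-semistable, while a direct Hirzebruch-Riemann-Roch calculation gives $\chi(E(k))=(k-2)^{2}+e-\tfrac{49}{3}$. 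In particular $\chi(E(2))=e-\tfrac{49}{3}$, so the desired bound is equivalent to $\chi(E(2))\leq 0$.

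The main argument is a slope-based Hom-vanishing. For $(\alpha,\beta)$ in the disk $\alpha^{2}+(\beta+4)^{2}<4$ (which lies inside the strip $\beta\in(-6,-2)$, so that both $\mathcal{O}(-2)$ and $\mathcal{O}(-6)[1]$ lie in $\mathrm{Coh}^{\beta}(\mathbb{P}^{3})$), a direct calculation shows
\[
\nu(\mathcal{O}(-2))\ >\ \nu(E)\ >\ \nu(\mathcal{O}(-6)[1]).
\]
Tilt-stability of $\mathcal{O}(-2)$ and $\mathcal{O}(-6)[1]$ then forces $\mathrm{Hom}(\mathcal{O}(-2),E)=0$ and $\mathrm{Hom}(E,\mathcal{O}(-6)[1])=0$, which are respectively $\mathrm{H}^{0}(E(2))$ and, by Serre duality on $\mathbb{P}^{3}$, $\mathrm{H}^{2}(E(2))^{\vee}$. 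Hence $\chi(E(2))=-h^{1}(E(2))\leq 0$, giving $e\leq\tfrac{49}{3}$. To apply this I need $E$ to be $\nu$-semistable at some point of the disk; here I invoke Theorem~\ref{WC} and verify the absence of walls for the class $(0,2,-8,e)$: any splitting into two tilt-stable factors of $\mathrm{ch}_{1}=1$ would, by Lemma~\ref{lem:lem_7}(2), consist of sheaves $\mathcal{I}_{Z_{i}/P_{i}}(k_{i})$ with $\mathrm{ch}_{2}=k_{i}-\tfrac{1}{2}$, while slope-matching to $\nu(E)=-\beta-4$ forces $\mathrm{ch}_{2}=-4$ and hence $k_{i}=-\tfrac{7}{2}\notin\mathbb{Z}$, a contradiction; a $\mathrm{ch}_{1}=0$ factor is excluded by purity.

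For the equality case $e=\tfrac{49}{3}$, $\chi(E(2))=0$ combined with the previous vanishings forces $\mathrm{H}^{1}(E(2))=0$ as well, and together with the pure-sheaf structure on the degree-$2$ divisor this identifies $E\cong\mathcal{O}_{Q}(-3)$ for some (possibly singular) quadric $Q$; the Koszul resolution $0\to\mathcal{O}(-5)\to\mathcal{O}(-3)\to\mathcal{O}_{Q}(-3)\to 0$ then directly verifies $\mathrm{ch}(\mathcal{O}_{Q}(-3))=(0,2,-8,\tfrac{49}{3})$. The hardest step is ensuring the Hom-vanishing disk and the BMT-admissible region overlap at a semistability point of $E$: the two become tangent precisely when $e=\tfrac{49}{3}$, so the argument must be executed on (or arbitrarily close to) the limiting unit circle, and the residual case $e\geq\tfrac{52}{3}$, where the two regions become disjoint, requires an independent sheaf-theoretic input — for instance, an explicit destabilising-subobject construction via $\mathrm{Ext}^{1}$-computations between plane sheaves — to rule out such $E$ altogether.
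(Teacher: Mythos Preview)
The paper does not prove this lemma itself; it is quoted from \cite[Lemma~3.4]{R1}, so there is no in-paper argument to compare against. I will therefore assess your proposal on its own merits.

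Your Euler-characteristic strategy is the right one: $\chi(E(2))=e-\tfrac{49}{3}$, so it suffices to kill $H^{0}(E(2))$ and $H^{2}(E(2))$. However, your route to these vanishings has a genuine gap. You assert there are \emph{no} tilt walls for the class $(0,2,-8)$, but you only test rank-$0$ Jordan--H\"older factors and then invoke ``purity'' for the rest. This is false: for any quadric $Q$ the short exact sequence $\mathcal O(-3)\hookrightarrow\mathcal O_{Q}(-3)\twoheadrightarrow\mathcal O(-5)[1]$ in $\mathrm{Coh}^{\beta}$ (valid for $-5<\beta<-3$) gives an \emph{actual} wall at $\alpha^{2}+(\beta+4)^{2}=1$; rank-$1$ subobjects were never ruled out by your argument. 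Consequently you cannot freely transport semistability from an arbitrary $(\alpha_{0},\beta_{0})$ into your radius-$2$ disk, and your last paragraph in effect concedes this for $e\geq\tfrac{52}{3}$ without supplying the promised ``independent sheaf-theoretic input''.

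The fix is that the disk is unnecessary. Once you have established that $E$ is a pure $2$-dimensional sheaf, $H^{2}(E(2))\cong\mathrm{Hom}(E,\mathcal O(-6))^{\vee}=0$ is automatic, since a torsion sheaf admits no nonzero map to a torsion-free one. For $H^{0}(E(2))=\mathrm{Hom}(\mathcal O(-2),E)$, argue directly at the given $(\alpha,\beta)$: any nonzero $f\colon\mathcal O(-2)\to E$ has image a pure subsheaf of the form $\mathcal O_{D}(-2)$ with $\deg D\in\{1,2\}$, and one computes $\nu_{\alpha,\beta}(\mathcal O_{D}(-2))=-\tfrac{5}{2}-\beta$ or $-3-\beta$, both strictly larger than $\nu_{\alpha,\beta}(E)=-4-\beta$ for \emph{every} $(\alpha,\beta)$, contradicting $\nu$-semistability. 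This gives $e\leq\tfrac{49}{3}$ with no wall analysis at all. Your equality case is also too quick: knowing $h^{i}(E(2))=0$ for $i=0,1,2$ does not by itself single out $\mathcal O_{Q}(-3)$ among pure sheaves supported on a (possibly non-integral) quadric; one still needs to pin down that $E$ is the pushforward of a line bundle and identify the twist.
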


To describe the walls in the space of Bridgeland stability conditions, we need the following result relating tilt-stability and Bridgeland stability:

\begin{lem}[{\cite[Lemma 8.9]{BMS}}] \label{BMS8.9}
 Let $E \in \mathrm{Coh}^{ \alpha, \beta}(\mathbb{P}^{3})$ be a $\lambda_{\alpha, \beta, s}$-semistable object, for all $s\gg1$ sufficiently big. Then it satisfies one of the following conditions:
 \\(a) $H_{\beta}^{-1}(E)=0$ and  $H_{\beta}^{0}(E)$ is $\nu_{\alpha, \beta}$-semistable.
 \\(b)  $H_{\beta}^{-1}(E)$ is $\nu_{\alpha, \beta}$-semistable and  $H_{\beta}^{0}(E)$ is either $0$ or supported in dimension $0$. 
 
 Conversely, if $H_{\beta}^{-1}(E)$ is $\nu_{\alpha, \beta}$-stable, $H_{\beta}^{0}(E)$ is either $0$ or zero dimensional torsion sheaf, and $\mathrm{\mathrm{Hom}}(\mathcal{O}_{p}, E)=0$ for all points $p \in \mathbb{P}^{3}$, then $E $ is  $\lambda_{\alpha, \beta, s}$-stable for all $s\gg1$ sufficiently big. 
\end{lem}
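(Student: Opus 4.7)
The plan is to reduce Bridgeland $\lambda_{\alpha,\beta,s}$-(semi)stability in the limit $s\to\infty$ to tilt $\nu_{\alpha,\beta}$-(semi)stability of the tilt-cohomology sheaves $H^{0}_{\beta}(E)$ and $H^{-1}_{\beta}(E)$. The crucial observation is the asymptotic formula
\[
\lambda_{\alpha,\beta,s}(F) \;=\; \frac{\mathrm{ch}^{\beta}_{3}(F)}{\mathrm{Im}\,Z^{tilt}_{\alpha,\beta}(F)} \;-\; \frac{(s+\tfrac{1}{6})\alpha^{2}}{\nu_{\alpha,\beta}(F)},
\]
which follows directly from Definition \ref{defbridgeland} by writing $\mathrm{Im}\,Z^{tilt}_{\alpha,\beta}(F)=\nu_{\alpha,\beta}(F)\cdot\mathrm{ch}^{\beta}_{1}(F)$. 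Thus, to first order in $s$, the Bridgeland slope is controlled by the inverse of the tilt slope.

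\textbf{Forward direction.} For any $E\in\mathrm{Coh}^{\alpha,\beta}(\mathbb{P}^{3})$ the distinguished triangle $H^{-1}_{\beta}(E)[1]\to E\to H^{0}_{\beta}(E)$ is a short exact sequence in the Bridgeland heart, since $H^{-1}_{\beta}(E)\in\FF_{\alpha,\beta}$ and $H^{0}_{\beta}(E)\in\TT_{\alpha,\beta}$. Assume both summands are non-zero and that $H^{0}_{\beta}(E)$ is not zero-dimensional, so that $\mathrm{ch}^{\beta}_{1}(H^{0}_{\beta}(E))>0$. Then the asymptotic formula gives $\lambda_{\alpha,\beta,s}(H^{-1}_{\beta}(E)[1])\to+\infty$ (since $\nu\leq 0$ on $\FF$) while $\lambda_{\alpha,\beta,s}(H^{0}_{\beta}(E))\to-\infty$ (since $\nu>0$ on $\TT$ and $\mathrm{Im}\,Z^{tilt}>0$). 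Consequently the inclusion $H^{-1}_{\beta}(E)[1]\hookrightarrow E$ violates $\lambda_{\alpha,\beta,s}$-semistability for all $s\gg 1$. This forces either $H^{-1}_{\beta}(E)=0$ (case (a)) or $\mathrm{ch}^{\beta}_{1}(H^{0}_{\beta}(E))=0$, i.e.\ $H^{0}_{\beta}(E)$ is $0$ or a zero-dimensional sheaf (case (b)). In either case, a tilt-destabilising subobject of the surviving cohomology sheaf would produce, via the obvious inclusion into $E$ in the Bridgeland heart, a $\lambda$-destabilising subobject in the large-$s$ limit by the same asymptotic comparison; so the surviving cohomology is $\nu_{\alpha,\beta}$-semistable.

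\textbf{Converse.} Assume $H^{-1}_{\beta}(E)$ is $\nu_{\alpha,\beta}$-\emph{stable}, $H^{0}_{\beta}(E)$ is $0$ or zero-dimensional, and $\mathrm{Hom}(\OO_{p},E)=0$ for all $p$. Suppose for contradiction that there is a short exact sequence $0\to A\to E\to B\to 0$ in $\mathrm{Coh}^{\alpha,\beta}(\mathbb{P}^{3})$ with $\lambda_{\alpha,\beta,s}(A)\geq\lambda_{\alpha,\beta,s}(E)$ for arbitrarily large $s$. Taking the long exact sequence in $H^{\bullet}_{\beta}$ gives
\[
0\to H^{-1}_{\beta}(A)\to H^{-1}_{\beta}(E)\to H^{-1}_{\beta}(B)\to H^{0}_{\beta}(A)\to H^{0}_{\beta}(E)\to H^{0}_{\beta}(B)\to 0,
\]
and I would run a case analysis on the Chern characters of these six sheaves. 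The asymptotic formula shows that to have $\lambda_{\alpha,\beta,s}(A)\geq\lambda_{\alpha,\beta,s}(E)$ in the limit, one must have either (i) $\mathrm{ch}^{\beta}_{1}(H^{0}_{\beta}(A))=0$ and $H^{-1}_{\beta}(A)\subseteq H^{-1}_{\beta}(E)$ with $\nu$-slope comparison forced by the $\nu$-stability of $H^{-1}_{\beta}(E)$, or (ii) $A$ has $\mathrm{Im}\,Z_{\alpha,\beta,s}=0$, i.e.\ $A$ is a sum of shifted points. The assumption $\mathrm{Hom}(\OO_{p},E)=0$ rules out (ii) (also excluding $\OO_{p}[-1]\subset E$ via Lemma \ref{charPT}-type considerations), while the $\nu$-stability of $H^{-1}_{\beta}(E)$ together with $H^{0}_{\beta}(E)$ being zero-dimensional rules out (i) unless $A=0$ or $A=E$, giving the required contradiction.

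\textbf{Main obstacle.} The delicate point is ensuring that the bound ``$s\gg 1$'' can be chosen \emph{uniformly}, i.e.\ independently of the potential destabilising subobject. This is handled by the support property of Theorem \ref{BMT} together with local finiteness of walls (Theorem \ref{WC}): the classes of possible Jordan--H\"older factors of $E$ along the ray $s\to\infty$ lie in a discrete set, so only finitely many numerical walls are relevant, and one can take $s$ beyond all of them at once. This uniform-wall argument, rather than the slope calculation itself, is where care is needed.
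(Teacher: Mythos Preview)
The paper does not prove this lemma; it is quoted verbatim from \cite[Lemma 8.9]{BMS} and used as a black box. So there is no ``paper's own proof'' to compare against. That said, your outline follows the standard BMS argument and is essentially correct in spirit, with two points that need tightening.

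First, the displayed asymptotic formula is miswritten: the denominator of the first term should be $\mathrm{Im}\,Z_{\alpha,\beta,s}(F)=\mathrm{ch}^{\beta}_{2}(F)-\tfrac{\alpha^{2}}{2}\mathrm{ch}^{\beta}_{0}(F)$ (equivalently $-\mathrm{Re}\,Z^{tilt}_{\alpha,\beta}(F)$), not $\mathrm{Im}\,Z^{tilt}_{\alpha,\beta}(F)=\mathrm{ch}^{\beta}_{1}(F)$. This is cosmetic, since only the dominant $-\,(s+\tfrac{1}{6})\alpha^{2}/\nu_{\alpha,\beta}(F)$ term matters for the argument.

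Second, the implication ``$H^{0}_{\beta}(E)$ not zero-dimensional $\Rightarrow \mathrm{ch}^{\beta}_{1}(H^{0}_{\beta}(E))>0$'' is false as stated: a sheaf supported on a curve has $\mathrm{ch}^{\beta}_{0}=\mathrm{ch}^{\beta}_{1}=0$, hence $\nu=+\infty$, and your formula gives $\lambda_{\alpha,\beta,s}$ constant in $s$, not $\to -\infty$. The fix is easy: what you actually need is that $H^{0}_{\beta}(E)$ not zero-dimensional implies $\mathrm{Im}\,Z_{\alpha,\beta,s}(H^{0}_{\beta}(E))>0$, which does hold (objects of $\TT_{\alpha,\beta}$ with vanishing $\mathrm{Im}\,Z_{\alpha,\beta,s}$ are precisely the zero-dimensional sheaves). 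Then $\lambda_{\alpha,\beta,s}(H^{0}_{\beta}(E))$ is bounded above while $\lambda_{\alpha,\beta,s}(H^{-1}_{\beta}(E)[1])\to +\infty$, and the destabilisation follows. With these two corrections your sketch goes through; the converse and the uniformity remark via the support property are handled correctly.
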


\begin{rmk} \label{explanation}
Note that (a) and (b) apply to  semistable objects of Chern character $v$ with respect to the stability conditions on the left and right of the hyperbola $\H$, respectively. 
\end{rmk}


We also need the following Lemma in \cite{Huy}:

 \begin{lem} [{\cite[Corollary 11.4]{Huy}}]  \label{lemhuy} \label{exttrihuy}
  Let $j\colon Y\hookrightarrow X$ be a smooth hypersurface. Then for any $\mathcal{F} \in \mathrm{D}^{b}(Y)$ there exists a distinguished triangle
  $$\mathcal{F} \otimes \mathcal{O}_{Y}(-Y)[1] \rightarrow j^{*}j_{*}\mathcal{F} \rightarrow \mathcal{F}.$$
  \end{lem}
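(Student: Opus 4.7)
The plan is to compute $Lj^*j_*\mathcal{F}$ directly from the two-term locally free resolution of $\mathcal{O}_Y$ as an $\mathcal{O}_X$-module, and to exploit the fact that the induced Koszul differential vanishes on any sheaf supported on $Y$.

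First, I would use the standard short exact sequence
$$0 \to \mathcal{O}_X(-Y) \to \mathcal{O}_X \to j_*\mathcal{O}_Y \to 0$$
to present $\mathcal{O}_Y$ as a two-term complex $K^\bullet = [\mathcal{O}_X(-Y) \to \mathcal{O}_X]$ (in cohomological degrees $-1$ and $0$) of locally free $\mathcal{O}_X$-modules, whose differential is multiplication by a local section cutting out $Y$.

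Since $Lj^*$ coincides with $(-) \otimes_{\mathcal{O}_X} \mathcal{O}_Y$ in the derived sense, I would then compute
$$Lj^*j_*\mathcal{F} \;=\; j_*\mathcal{F} \otimes_{\mathcal{O}_X} K^\bullet \;=\; \bigl[\, j_*\mathcal{F} \otimes \mathcal{O}_X(-Y) \longrightarrow j_*\mathcal{F} \,\bigr].$$
The key observation is that the differential, being multiplication by the defining equation of $Y$, annihilates $j_*\mathcal{F}$ since the latter is set-theoretically supported on $Y$. Hence the two-term complex has zero differential, and reading off its canonical truncation triangle (viewed as an object in $\mathrm{D}^{b}(Y)$) yields the distinguished triangle
$$\mathcal{F} \otimes \mathcal{O}_Y(-Y)[1] \longrightarrow Lj^*j_*\mathcal{F} \longrightarrow \mathcal{F} \xrightarrow{+1} .$$

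The argument is essentially formal, so no serious obstacle arises. The only points requiring care are verifying that the Koszul differential vanishes upon restriction to $Y$ (immediate, since the defining section of $Y$ is zero along $Y$) and correctly extracting the desired triangle from a two-term complex with zero differential, which amounts to the standard splitting of such a complex in the derived category. Conceptually the triangle records the failure of $j^*j_*$ to be the identity, governed by the conormal line bundle $\mathcal{O}_Y(-Y) = \mathcal{N}^\vee_{Y/X}$.
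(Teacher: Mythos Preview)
The paper does not give its own proof of this lemma; it is quoted directly from Huybrechts' book (Corollary 11.4) and used as a black box. Your argument is the standard one and is essentially the proof given in that reference: resolve $j_*\mathcal{O}_Y$ by the two-term Koszul complex $[\mathcal{O}_X(-Y)\to\mathcal{O}_X]$, observe that the differential becomes zero after tensoring with anything scheme-theoretically supported on $Y$, and read off the triangle.

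One small point worth tightening: as written, your sentence ``the two-term complex has zero differential'' treats $j_*\mathcal{F}$ as a single sheaf, whereas the statement is for arbitrary $\mathcal{F}\in\mathrm{D}^b(Y)$. The cleanest way to cover the general case is to tensor the distinguished triangle $\mathcal{O}_X(-Y)\to\mathcal{O}_X\to j_*\mathcal{O}_Y$ directly with $j_*\mathcal{F}$ in $\mathrm{D}^b(X)$; the first map is multiplication by the defining section and hence vanishes term-by-term on any complex of $\mathcal{O}_Y$-modules, giving the desired triangle (in fact a splitting) without having to unpack a double complex. Your ``no serious obstacle'' assessment is correct --- this is a routine extension of your argument, not a genuine gap.
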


We have the following lemmas for more general cases:

\DeclarePairedDelimiter\ceil{\lceil}{\rceil}
\DeclarePairedDelimiter\floor{\lfloor}{\rfloor}

\begin{lem}[{\cite[Lemma 3.11]{R1}}] \label{Gfactor}
 Let $w=(0,1,-i-1/2, e)$. The stable objects of class $w$ for stability conditions with $\mathrm{Im}(Z_{\alpha, \beta, s}(w))<0$ near the hyperbola $\Imm(Z_{\alpha, \beta, s}(w))=0$ are of the form $\iota_{P_{*}}(\mathcal{I^{\vee}}_{Z})(- i)$, where $Z$ is a zero dimensional subscheme of length $l=1/6+(i/2)(i+1)-e$, and $P$ is a plane. 
\end{lem}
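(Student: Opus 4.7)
My plan is to reduce to tilt-stability via Lemma~\ref{BMS8.9} and then identify the tilt-semistable factors using Lemma~\ref{lem:lem_7}(2). First I observe that, since $\ch_0(w)=0$, the ``hyperbola'' $\Imm Z_{\alpha,\beta,s}(w) = 0$ degenerates to the vertical line $\beta = -i-\tfrac12$; on the side $\beta>-i-\tfrac12$, Bridgeland stability of $E$ of class $w$ is equivalent to Bridgeland stability of $E[1] \in \mathrm{Coh}^{\alpha,\beta}(\P^3)$. Because $\ch_1^\beta(E[1]) = -1$ while every object of $\mathrm{Coh}^\beta$ has $\ch_1^\beta \geq 0$, case~(a) of Lemma~\ref{BMS8.9} is impossible; for $s\gg 0$ case~(b) must therefore apply, yielding $A := H^{-1}_\beta(E[1])$ which is $\nu_{\alpha,\beta}$-semistable and $T := H^0_\beta(E[1])$ which is zero or $0$-dimensional of some length $l\geq 0$.

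Chern-character bookkeeping gives $\ch(A) = (0,1,-i-\tfrac12,e+l)$; specialising to the integer $\beta=-i$ places $A$ in the scope of Lemma~\ref{lem:lem_7}(2), so $A \cong \iota_{P*}\II_{Z'/P}(-i)$ for a plane $P\subset\P^3$ and a $0$-dimensional $Z'\subset P$ of length $\tfrac16 + i(i+1)/2 - e - l$. To force $Z'=\emptyset$ I would push forward and twist the defining sequence of $\II_{Z'/P}$ and rotate, producing the short exact sequence in $\mathrm{Coh}^{\alpha,\beta}$
\[
0 \to \iota_*\OO_{Z'} \to \iota_{P*}\II_{Z'/P}(-i)[1] \to \iota_{P*}\OO_P(-i)[1] \to 0,
\]
where every term lies in the heart since $\iota_{P*}\OO_P(-i)\in\mathcal{F}_{\alpha,\beta}$ (as $\nu_{\alpha,\beta}<0$ on this side) and $\iota_*\OO_{Z'}$ is $0$-dimensional. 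Composing with $A[1]\hookrightarrow E[1]$ exhibits $\iota_*\OO_{Z'}$ as a subobject of $E[1]$ with $\lambda_{\alpha,\beta,s}=+\infty$ (because $\Imm Z(\iota_*\OO_{Z'})=0$), while $\lambda(E[1])$ is finite nearby; this destabilizes $E[1]$ unless $Z'=\emptyset$, hence $A \cong \iota_{P*}\OO_P(-i)$.

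It remains to identify the extension $0 \to \iota_{P*}\OO_P(-i)[1] \to E[1] \to \iota_*\OO_Z \to 0$, parameterised by $\xi \in \Ext^2(\iota_*\OO_Z,\iota_{P*}\OO_P(-i))$. I would apply $\RHom(\iota_*\OO_Z,-)$ to the Koszul resolution $0\to\OO_{\P^3}(-i-1)\to\OO_{\P^3}(-i)\to\iota_{P*}\OO_P(-i)\to 0$ (alternatively, Lemma~\ref{lemhuy}) to show that $\Ext^2(\OO_p,\iota_{P*}\OO_P(-i))$ is one-dimensional for $p\in P$ and zero otherwise, which forces $Z\subset P$; any $p\in Z$ where $\xi$ vanishes splits off a destabilising $\OO_p$ direct summand, so $\xi$ is componentwise nonzero. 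Modulo the $\Aut(\iota_*\OO_Z)$-scaling action such a $\xi$ is unique and coincides with the canonical class obtained by applying $\RlHom_P(-,\OO_P(-i))$ to $0\to\II_{Z/P}\to\OO_P\to\OO_Z\to 0$, giving $E \cong \iota_{P*}\II^\vee_Z(-i)$; the Chern-character identity then gives $l = \tfrac16 + i(i+1)/2 - e$.

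The hardest part will be the final step: showing that Bridgeland stability pins down $\xi$ up to the automorphism action so that $E[1]$ must arise from the derived-dual construction. This rests on local duality on the Gorenstein surface $P \cong \P^2$ to pin down $\RlHom_P(\OO_Z,\OO_P)$ and on a careful analysis of the long exact sequence attached to the Koszul resolution of $\iota_{P*}\OO_P(-i)$ to rule out every degenerate choice of $\xi$.
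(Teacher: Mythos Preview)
The paper does not give its own proof of this lemma; it is imported from \cite{R1}. That said, your strategy is exactly the one the paper uses for the parallel Lemma~\ref{torsionfactor}: pass to $E[1]$, let $s\to\infty$, apply Lemma~\ref{BMS8.9} to split off a tilt-semistable $\HH^{-1}_\beta$ and a $0$-dimensional $\HH^0_\beta$, identify the former via Lemma~\ref{lem:lem_7}(2), and then use the stability constraint $\Hom(\OO_p,E[1])=0$ to kill the degenerate cases. Your destabilising-subobject argument forcing $Z'=\emptyset$ is a clean variant of the $\Hom(\OO_z[-1],E)=0$ argument used in the proof of Lemma~\ref{torsionfactor}.

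There is, however, a genuine gap in your final identification step, and it is exactly where you suspected. You tacitly write $T=\iota_*\OO_Z$ for a subscheme $Z$, but nothing so far forces the $0$-dimensional sheaf $T$ to be the structure sheaf of a subscheme. In fact for the target object one has $\HH^1(\II_Z^\vee)=\mathcal{E}xt^1_P(\II_Z,\OO_P)\cong\omega_Z$, which is isomorphic to $\OO_Z$ only when $Z$ is Gorenstein. The correct chain of reasoning is: the condition $\Hom(\OO_p,E[1])=0$ says that the socle of $T$ at every point is at most one-dimensional; by Matlis duality over the regular local rings $\OO_{P,p}$ this is equivalent to $T\cong\omega_Z$ for a unique subscheme $Z\subset P$. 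Then $\Ext^2_P(\omega_Z,\OO_P(-i))$ is naturally a rank-one free $\Gamma(\OO_Z)$-module, the stability constraint forces the extension class $\xi$ to be a unit in $\Gamma(\OO_Z)$, and $\Aut(\omega_Z)=\OO_Z^\times$ acts transitively on units, so $\xi$ is unique up to isomorphism and equals the class of $\II_Z^\vee(-i)$. Your Koszul computation already shows $\Ext^2_P\to\Ext^2_{\P^3}$ is an isomorphism here, so $E=\iota_{P*}F$ automatically and the whole endgame can be run on $P$. With this correction your argument goes through.
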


\begin{lem}[{\cite[Lemma 3.12]{R1}}] \label{Ffactor}
Let $w=(1,-1,-D+1/2, e)$ for $D=1,2$. The stable objects of Chern character $w$ for stability conditions with $\mathrm{Im}(Z_{\alpha, \beta, s}(w)) <0$ near the hyperbola  $\Imm(Z_{\alpha, \beta, s}(w))=0$ are sections of pure one-dimensional sheaves supported on degree $D$ curves with cokernel of length $l=3D-e-7/6 \in \mathbb{Z}_{\geq 0}$.
\end{lem}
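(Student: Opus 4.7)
The plan is to mirror the strategy used for Lemma~\ref{Gfactor}: apply Lemma~\ref{BMS8.9} to characterise $\lambda$-stable objects on the right side of $\H$, invoke Lemma~\ref{lem:lem_7} to pin down the tilt-stable cohomology component, and then read off $l$ from the Chern character. Since $\Imm(Z_{\alpha,\beta,s}(w)) < 0$ near $\H$, a $\sigma$-stable object $E$ of class $w$ has $E[1] \in \mathrm{Coh}^{\alpha,\beta}(\P^3)$ with phase just below $1$. Following Remark~\ref{explanation} applied to the class $w$, the object $E[1]$ falls into case (b) of Lemma~\ref{BMS8.9}: its tilt-cohomology $\mathcal{H}^{-1}_\beta(E[1])$ is $\nu_{\alpha,\beta}$-stable, and $\mathcal{H}^0_\beta(E[1])$ is either zero or a zero-dimensional torsion sheaf $Q$ of some length $l \geq 0$. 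Moreover, stability forces $\Hom(\OO_p, E[1]) = 0$ for every closed point $p$, because $\OO_p$ has phase $1 > \phi(E[1])$ in the heart.

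From the triangle $\mathcal{H}^{-1}_\beta(E[1])[1] \to E[1] \to Q$ one computes $\mathrm{ch}(\mathcal{H}^{-1}_\beta(E[1])) = (1, -1, -D + 1/2, e + l)$, and its twist at $\beta = -2$ reads $(1, 1, 1/2 - D, \cdot)$. Lemma~\ref{lem:lem_7}(1) with $D \in \{1, 2\}$ then identifies $\mathcal{H}^{-1}_\beta(E[1])$ as $\mathcal{I}_Y(-1)$, where $Y = C_D \cup Z$ is the scheme-theoretic union of a rational degree $D$ curve $C_D$ and a (possibly empty) $0$-dimensional subscheme $Z$ of floating or embedded points. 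The vanishing $\Hom(\OO_p, E[1]) = 0$ then rules out any nontrivial $Z$: from the short exact sequence $0 \to \mathcal{I}_Y(-1) \to \OO(-1) \to \OO_Y(-1) \to 0$ and the long exact sequence of $\Hom(\OO_p, -)$ (using $\Ext^1(\OO_p, \OO) = 0$ on $\P^3$), any $p \in Z$ yields a nonzero class in $\Ext^1(\OO_p, \mathcal{I}_Y(-1))$ that injects into $\Hom(\OO_p, E[1])$, a contradiction.

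With $\mathcal{H}^{-1}_\beta(E[1]) = \mathcal{I}_{C_D}(-1)$ and $\mathcal{H}^0_\beta(E[1]) = Q$ of length $l$, the canonical inclusion $\mathcal{I}_{C_D}(-1) \hookrightarrow \OO(-1)$ and the extension class defining $E[1]$ let me realise $E$ as a two-term complex $[\OO(-1) \xrightarrow{s} \mathcal{F}]$ in degrees $[0,1]$, where $\mathcal{F}$ fits into $0 \to \OO_{C_D}(-1) \to \mathcal{F} \to Q \to 0$ and $s$ is the composition $\OO(-1) \twoheadrightarrow \OO_{C_D}(-1) \hookrightarrow \mathcal{F}$. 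Purity of $\mathcal{F}$ follows because any zero-dimensional torsion subsheaf of $\mathcal{F}$ would either already be counted in $Q = \mathcal{H}^0_\beta(E[1])$ or split off as a destabilising subobject of $E[1]$; conversely, by the converse part of Lemma~\ref{BMS8.9}, any such pair $(\mathcal{F}, s)$ produces a $\sigma$-stable object of class $w$. Finally, using $\chi(\OO_{C_D}) = 1$ for the rational curve $C_D$ yields $\mathrm{ch}(\mathcal{I}_{C_D}) = (1, 0, -D, 2D - 1)$; tensoring with $\OO(-1)$ gives $\mathrm{ch}_3(\mathcal{I}_{C_D}(-1)) = 3D - 7/6$, and equating this with $e + l$ produces $l = 3D - e - 7/6$.

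The main technical hurdle is the exclusion of the extra floating or embedded points in $\mathcal{H}^{-1}_\beta(E[1])$; once this is handled via the $\Hom(\OO_p,-)$-vanishing argument, the remaining steps are routine bookkeeping between the heart and its shifts, plus the direct Chern-character computation outlined above.
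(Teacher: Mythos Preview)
The paper does not supply its own proof of this lemma; it is quoted from \cite[Lemma~3.12]{R1}. Your argument is correct and in fact follows exactly the template the paper uses for the neighbouring Lemma~\ref{torsionfactor} (the case $D=0$): pass to the hyperbola for $w$, let $s\to\infty$ so that Lemma~\ref{BMS8.9} applies, land in case~(b) because $\mathrm{ch}^\beta_1(E[1])=1+\beta<0$ on the left branch rules out case~(a), identify the $\nu$-semistable cohomology $\HH^{-1}_\beta(E[1])$ via Lemma~\ref{lem:lem_7}, and use the vanishing $\Hom(\OO_p,E[1])=\Hom(\OO_p[-1],E)=0$ to eliminate any floating or embedded points. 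Your injection $\Ext^1(\OO_p,\II_Y(-1))\hookrightarrow\Hom(\OO_p,E[1])$ is the same mechanism as the paper's implication $\Hom(\OO_z[-1],E)=0\Rightarrow\Hom(\OO_z[-1],\HH^0_\beta(E))=0$ in the proof of Lemma~\ref{torsionfactor}.

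One small streamlining: once you know $\HH^0(E)=\II_{C_D}(-1)$ with $C_D$ Cohen--Macaulay, $\HH^1(E)=Q$ zero-dimensional, and $\Hom(\OO_p[-1],E)=0$, you can invoke Lemma~\ref{charPT} directly (applied to $E(1)$) to conclude that $E$ is a twisted stable pair, rather than arguing the purity of $\FF$ by hand; your sentence about torsion subsheaves ``splitting off as a destabilising subobject'' is a little loose, whereas Lemma~\ref{charPT} packages exactly this step.
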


\begin{lem} \label{torsionfactor}
Let $w=(1,-1,1/2, e)$. If $E$ is a stable object of Chern character $w$ for stability conditions with $\mathrm{Im}(Z_{\alpha, \beta, s}(w)) <0$ near the hyperbola  $\Imm(Z_{\alpha, \beta, s}(w))=0$, we have $e=-1/6$, and $E \cong \mathcal{O}(-1)$.
\end{lem}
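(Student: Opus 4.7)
The plan is to shift $E$ into the Bridgeland heart and reduce to a tilt-stability problem. Since $\Imm(Z_{\alpha,\beta,s}(w))<0$, the shift $E[1]$ lies in $\mathrm{Coh}^{\alpha,\beta}(\P^3)$ and is $\lambda_{\alpha,\beta,s}$-stable. Applying Lemma~\ref{BMS8.9}(b) to $E[1]$ provides a canonical short exact sequence
\begin{equation*}
0 \to F[1] \to E[1] \to T \to 0
\end{equation*}
in $\mathrm{Coh}^{\alpha,\beta}(\P^3)$, where $F := \HH^{-1}_\beta(E[1])$ is $\nu_{\alpha,\beta}$-semistable and $T := \HH^0_\beta(E[1])$ is zero or a zero-dimensional sheaf of some length $\ell \geq 0$. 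A direct computation gives $\mathrm{ch}(F) = (1,-1,1/2,\ell+e)$ with $\mathrm{ch}_1(F)^2 - 2\mathrm{ch}_0(F)\mathrm{ch}_2(F) = 0$, saturating the Bogomolov--Gieseker inequality (Theorem~\ref{BG}). Tilt-stability considerations in the relevant region then force $F$ to be a torsion-free rank-one sheaf $F = \mathcal{I}_Z(-1)$ for some zero-dimensional subscheme $Z \subset \P^3$, whence $e = -1/6 - \ell - \ell(Z)$.

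Next I rule out $Z \neq \emptyset$. Tensoring $0 \to \mathcal{I}_Z \to \mathcal{O} \to \mathcal{O}_Z \to 0$ by $\mathcal{O}(-1)$ and taking the long exact sequence of cohomology with respect to the heart $\mathrm{Coh}^{\alpha,\beta}$ produces the short exact sequence
\begin{equation*}
0 \to \mathcal{O}_Z \to \mathcal{I}_Z(-1)[1] \to \mathcal{O}(-1)[1] \to 0
\end{equation*}
in the Bridgeland heart. Composing the inclusion $\mathcal{O}_Z \hookrightarrow \mathcal{I}_Z(-1)[1] = F[1]$ with $F[1] \hookrightarrow E[1]$ exhibits $\mathcal{O}_Z$ as a subobject of $E[1]$. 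Since $Z_{\alpha,\beta,s}(\mathcal{O}_Z) = -\ell(Z)$ is real and negative, the phase of $\mathcal{O}_Z$ equals $1$, while $\phi(E[1]) < 1$ as $\Imm(Z(E[1])) > 0$. This strictly destabilizes $E[1]$, contradicting stability, so $Z = \emptyset$ and $F = \mathcal{O}(-1)$; in particular $e = -1/6 - \ell$.

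Finally, to show $\ell = 0$ I use a vanishing-$\Ext$ argument. If $T \neq 0$, then $E[1]$ is an extension of $T$ by $\mathcal{O}(-1)[1]$ classified by $\Ext^1_{\mathrm{D}^b(\P^3)}(T,\mathcal{O}(-1)[1]) = \Ext^2(T,\mathcal{O}(-1))$. Because $T$ is Cohen--Macaulay of codimension $3$ in $\P^3$, local duality yields $\lExt^i(T,\mathcal{O}_{\P^3}) = 0$ for $i<3$, and hence $\Ext^2(T,\mathcal{O}(-1)) = 0$. Therefore every such extension splits, $E[1] \cong \mathcal{O}(-1)[1] \oplus T$ is decomposable, and $\lambda_{\alpha,\beta,s}$-stability of $E[1]$ is violated. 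Thus $T = 0$, $e = -1/6$, and $E \cong \mathcal{O}(-1)$. The main subtlety is the construction of the short exact sequence in the second paragraph: the cohomology computation in the Bridgeland heart reverses the roles of subobject and quotient, producing $\mathcal{O}_Z$ as a destabilizing subobject of $E[1]$ rather than as a quotient of $\mathcal{O}(-1)$.
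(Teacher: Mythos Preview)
Your proof is correct and follows essentially the same route as the paper: reduce to tilt-stability via Lemma~\ref{BMS8.9}, identify $\HH^0_\beta(E)$ as $\mathcal{I}_Z(-1)$, rule out $Z\neq\emptyset$ by exhibiting a destabilizing map from a skyscraper, and kill the zero-dimensional piece $T$ via the vanishing $\Ext^2(T,\mathcal{O}(-1))=0$. The only cosmetic differences are that you phrase the destabilization as $\mathcal{O}_Z\hookrightarrow E[1]$ rather than as $\Hom(\mathcal{O}_z[-1],E)=0$, and you invoke local duality instead of Serre duality for the Ext vanishing; you also track the Chern character of $F$ more carefully than the paper does (the paper tacitly conflates $\mathrm{ch}(E)$ and $\mathrm{ch}(\HH^0_\beta(E))$), though you should say a word about why Lemma~\ref{BMS8.9} applies for all $s\gg 0$ (move to the hyperbola where semistability is $s$-independent).
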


\begin{proof}

 Let $E$ be such an object with Chern character $w$, i.e., $E$ is stable near the hyperbola, which means it is still stable on the hyperbola  when we reach the hyperbola from the right or left. For objects with $\Imm Z_{\alpha, \beta, s} = 0$, semistability does not change as $s$ varies; in particular, we can let $s \to +\infty$ and apply Lemma \ref{BMS8.9} to $E[1]$. Therefore, $\HH^1_\beta(E)$ is a torsion sheaf $T$ with zero-dimensional support, and  $\HH^0_\beta(E)$ is $\nu_{\alpha, \beta}$-semistable.  Notice that $E$ is semistable for $\Imm Z_{\alpha, \beta, s}(w) < 0$; this implies $\Hom(\OO_z[-1], E) = 0$ and so $\Hom(\OO_z[-1], \HH^0_\beta(E)) = 0$ for all $z \in \P^3$, as $\OO_z[-1]$ is semistable of phase $0$. But $(1,-1,1/2,e)$  is the Chern character of a tilt-semistable sheaf of the form $\mathcal{I}_{Z}(-1)$ where $Z$ is a zero dimensional subscheme of length $-1/6-e$ by Lemma  \ref{lem:lem_7}. But if $-1/6-e \neq 0$, then we would have $\Hom(\OO_z[-1], \HH^0_\beta(E)) \neq 0$ for $z \in Z$, which is a contradiction. Therefore $e=-1/6$ and thus $\HH^0_\beta(E) \cong \mathcal{O}(-1)$. Now by Serre duality, we have
$$\Ext^1(\HH^1_\beta(E)[-1], \HH^0_\beta(E))= \Ext^2(T, \mathcal{O}(-1))=\mathrm{H}^1(T)^{\vee}=0.$$
This means that  we must have $\HH^1_\beta(E)=0$; otherwise, $E$ would be a direct sum and hence unstable. Therefore  $E \cong \HH^0_\beta(E) \cong \mathcal{O}(-1)$.

\end{proof}

Let $\H$ be the left branch of the hyperbola $\Imm(Z_{\alpha, \beta, s}(v))=0$. Corresponding to the walls in $Stab^{tilt}(\P^3)$, we list all (tilt and Bridgeland) strictly semistable objects on each wall defined by two given objects having the same slope which can be obtained from the extensions of the  pairs of objects:
\begin{thm} \label{main}
  For the walls in $Stab^{tilt}(\P^3)$, the walls on the both sides of $\H$ are given by the following pairs: 

\begin{center} 
\begin{tabular}{|m{3.9cm} | m{4.2cm} | m{5.2cm}|  } 
\hline
 walls in tilt stability & walls on the left side of $\H$ & 
walls on the right side of $\H$ \\ 
\hline
 \textcolor{magenta}{$ (\beta + 6.5)^{2}+\alpha^{2}=30.25$} &
 
 \textcolor{red!85!blue}{ $\langle\mathcal{O}(-1), \mathcal{I}_{Z'_{6}/P}(-6)\rangle$}
 
 \textcolor{magenta}{ $\langle\mathcal{I}_{Z_{1}}(-1), \mathcal{I}_{Z'_{5}/P}(-6)\rangle$} 
 
 \textcolor{magenta}{ $\langle\mathcal{I}_{Z_{2}}(-1), \mathcal{I}_{Z'_{4}/P}(-6)\rangle$}
 
 \textcolor{magenta}{ $\langle\mathcal{I}_{Z_{3}}(-1), \mathcal{I}_{Z'_{3}/P}(-6)\rangle$}
 
  \textcolor{magenta}{ $\langle\mathcal{I}_{Z_{4}}(-1), \mathcal{I}_{Z'_{2}/P}(-6)\rangle$}
  
   \textcolor{magenta}{ $\langle\mathcal{I}_{Z_{5}}(-1), \mathcal{I}_{Z'_{1}/P}(-6)\rangle$}
   
  \textcolor{magenta}{ $\langle\mathcal{I}_{Z_{6}}(-1), \mathcal{O}_{P}(-6)\rangle$}
 
 & \textcolor{red!85!blue}{$\langle\mathcal{O}(-1), \iota_{P_{*}}\mathcal{I}_{Z_{6}}^{\vee}(-6)\rangle$} \\ 
\hline
 
  \textcolor{violet}{$(\beta + 5.5)^{2}+\alpha^{2}=18.25$} &

     \textcolor{violet!50!black}{ $\langle\mathcal{I}_{L_{0}}(-1), \mathcal{I}_{Z_{2}/P}(-5)\rangle$}  
     
   \textcolor{violet!55!blue}{ $\langle\mathcal{I}_{L_{1}}(-1), \mathcal{I}_{Z_{1}/P}(-5)\rangle$}
 
    \textcolor{violet!70!white}{ $\langle\mathcal{I}_{L_{2}}(-1), \mathcal{O}_{P}(-5)\rangle$} 
   
  &
   
   \textcolor{violet!70!white}{$\langle(\mathcal{O}(-1) \rightarrow \mathcal{O}_{L}(1)), \mathcal{O}_{P}(-5)\rangle$}

    \textcolor{violet!55!blue}{$\langle(\mathcal{O}(-1) \rightarrow \mathcal{O}_{L}),\iota_{P_{*}}\mathcal{I^{\vee}}_{Z_{1}}(-5)\rangle$}
    
     \textcolor{violet!50!black}{$\langle \mathcal{I}_{L_{0}}(-1),\iota_{P_{*}}\mathcal{I^{\vee}}_{Z_{2}}(-5)\rangle$}
    
    \\ 
\hline
\textcolor{green!64!black}{$(\beta + 4.5)^{2}+\alpha^{2}=8.25$} &
\textcolor{green!64!black}{ $\langle\mathcal{I}_{C_{2}}(-1), \mathcal{O}_{P}(-4)\rangle$} &
\textcolor{green!64!black}{ $\langle\mathcal{I}_{C_{2}}(-1), \mathcal{O}_{P}(-4)\rangle$}  \\ 
\hline
\textcolor{blue}{$(\beta +4)^{2}+\alpha^{2}=4$} & \textcolor{blue}{ $\langle\mathcal{O}(-2), \mathcal{O}_{Q}(-3)\rangle$} & 
\textcolor{blue}{ $\langle\mathcal{O}(-2), \mathcal{O}_{Q}(-3)\rangle$} \\ 
\hline
\end{tabular}
\end{center}
where $Z_{i}$'s and $Z'_{i}$'s are zero dimensional subschemes of length $i$,  $L_{i}$'s are lines plus $i$ extra embedded/floating points, $C_{2}$ is a conic, $Q$ is a quadric in $\mathbb{P}^{3}$,  $C$ a curve supported on $Q$,  and $\iota_{P}\colon P \hookrightarrow \mathbb{P}^{3}$ is the inclusion map.
 All the walls in $\Stab(\mathbb{P}^{3})$ induced by walls in $\Stab^{tilt}(\mathbb{P}^{3})$ intersect the hyperbola $\beta^2-\alpha^2=12$.
 \end{thm}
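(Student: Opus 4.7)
The plan is to proceed wall-by-wall through the four tilt-stability semicircles listed in Proposition \ref{Lemma 9}, and for each one to (i) enumerate all numerical decompositions $v = \mathrm{ch}(A) + \mathrm{ch}(B)$ compatible with the wall equation and with the Bogomolov-Gieseker (Theorem \ref{BG}) and BMT (Theorem \ref{BMT}) inequalities, then (ii) promote each numerical decomposition to an \emph{actual} factorization by invoking the classification lemmas of Section \ref{Walls}. For the tilt columns, the equality $\nu_{\alpha,\beta}(A) = \nu_{\alpha,\beta}(v) = \nu_{\alpha,\beta}(B)$ must hold along the whole semicircle, which pins down $\mathrm{ch}^{-4}_{\leq 2}$ of each factor and leaves only the third component free; BMT then bounds the admissible lengths from both sides. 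On the innermost wall $(\beta+4)^{2} + \alpha^{2} = 4$ this forces the unique splitting $v = \mathrm{ch}(\OO(-2)) + \mathrm{ch}(\OO_{Q}(-3))$; on the $(\beta+4.5)^{2} + \alpha^{2} = 8.25$ wall one obtains $\mathrm{ch}(\II_{C_2}(-1)) + \mathrm{ch}(\OO_{P}(-4))$; on the two outer walls the lists of pairs are indexed precisely by how 0-dimensional length is distributed between the rank-one and the plane-supported factor.

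Realizing these numerical types as actual semistable objects reduces to reading off the structural lemmas. The rank-one factors of the form $\II_{Z_{i}}(-1), \II_{L_{i}}(-1), \II_{C_{2}}(-1), \OO(-1), \OO(-2)$ are produced by Lemma \ref{lem:lem_7}, whose integrality constraint $d - 1/2 \in \Z_{\leq 0}$ yields exactly the purely one-dimensional curves and embedded-point configurations in the table. The rank-zero plane-supported factors $\II_{Z'_{i}/P}(-i), \OO_{P}(-i)$ come from Lemma \ref{lem:lem_7}(2), with the arithmetic identity $1/24 + d^{2}/2 - e = i$ dictating the admissible lengths, while the quadric-supported factor on the innermost wall is handled by Lemma \ref{(0, >)}. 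One then checks that in each row the Chern characters of the listed pair sum to $v=(1,0,-6,15)$.

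For the Bridgeland translation I will invoke Lemma \ref{BMS8.9} together with Remark \ref{explanation}. On the left side of $\H$, any semistable $E$ of class $v$ lies in $\mathrm{Coh}^{\beta}(\P^{3})$ with $\HH^{-1}_{\beta}(E) = 0$ and $\HH^{0}_{\beta}(E)$ tilt-semistable, so the Jordan-Hölder factors on an induced wall are precisely the tilt-semistable sheaves enumerated above; this reproduces the middle column. On the right side of $\H$, $E[1]$ satisfies Lemma \ref{BMS8.9}(b), and the Jordan-Hölder factors now live as complexes: by Lemma \ref{Gfactor} the plane factor $\II_{Z'_{6}/P}(-6)$ is replaced by its derived dual $\iota_{P*}\II_{Z_{6}}^{\vee}(-6)$ on the outermost wall; by Lemma \ref{Ffactor} the rank-one factors $\II_{L_{i}}(-1)$ for $i = 1, 2$ on the $(\beta+5.5)^{2} + \alpha^{2} = 18.25$ wall deform to the section-type two-term complexes $\bigl(\OO(-1) \to \OO_{L}\bigr)$ and $\bigl(\OO(-1) \to \OO_{L}(1)\bigr)$, while Lemma \ref{torsionfactor} rules out further degeneration of the $i=0$ factor, so $\II_{L_{0}}(-1)$ survives and pairs with the dualized plane ideal. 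The innermost two walls admit the same factors on both sides because neither $\OO(-2)$ nor $\II_{C_2}(-1)$ admits a non-trivial derived-dual deformation of the required Chern character.

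The final claim that every induced Bridgeland wall meets $\beta^{2} - \alpha^{2} = 12$ is then a one-line numerical check: substituting each semicircle equation from Proposition \ref{Lemma 9} into $\beta^{2} - \alpha^{2} = 12$ recovers the top point where the tilt wall meets $\H$, which by continuity of the Bridgeland slopes (Theorem \ref{WC}) also lies on the induced wall. I expect the main obstacle to be \emph{exhaustiveness} of the enumeration on the two outer walls, where several length-splittings must simultaneously be shown to be realized, to yield distinct walls, and to not collapse with any spurious non-sheaf complexes on the right side of $\H$; correctly isolating the section-type objects $\bigl(\OO(-1) \to \OO_{L}\bigr)$ and the derived-dual plane ideals $\iota_{P*}\II^{\vee}_{Z_{i}}(-i)$ while excluding everything else is the delicate step.
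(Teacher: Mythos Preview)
Your proposal is correct and follows essentially the same approach as the paper: use Proposition~\ref{Lemma 9} to pin down $\mathrm{ch}^{-4}_{\leq 2}$ of the factors, apply Lemma~\ref{lem:lem_7} (with non-negativity and integrality of the resulting lengths, rather than BMT directly) to classify the tilt-semistable factors on the left of $\H$, and invoke Lemmas~\ref{Gfactor}, \ref{Ffactor}, \ref{torsionfactor} for the right side. One small misattribution: Lemma~\ref{torsionfactor} concerns the class $(1,-1,1/2,e)$ and is what collapses the \emph{outermost} wall $(\beta+6.5)^2+\alpha^2=30.25$ on the right to the single pair $\langle\OO(-1),\iota_{P*}\II_{Z_6}^{\vee}(-6)\rangle$; the survival of $\II_{L_0}(-1)$ on the $(\beta+5.5)$ wall is the $D=1$, $l=0$ case of Lemma~\ref{Ffactor}, not Lemma~\ref{torsionfactor}.
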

\begin{proof}

The claims for the walls $(\beta +4)^{2}+\alpha^{2}=4$ and $(\beta + 4.5)^{2}+\alpha^{2}=8.25$, and the hyperbola are proven in \cite{R1}.

For the wall $ (\beta + 5.5)^{2}+\alpha^{2}=18.25 $, Proposition \ref{Lemma 9} implies the semistable objects on this semicircle are given by a pair $(F,G)$ such that $\mathrm{ch}^{-4}(F)=(1,3,7/2,e)$ and $\mathrm{ch}^{-4}(G)=(0,1,-3/2,5/3-e)$.  Therefore we have $\mathrm{ch}^{-4}(F \otimes \mathcal{O}(-3))=(1,0,-1,e-3/2)$, which means $F \otimes \mathcal{O}(-3) \cong \mathcal{I}_{L}(-4)$, where $L$ is a line with $l=5/2-e$ points (Lemma \ref{lem:lem_7}), and so $F \cong \mathcal{I}_{L}(-1)$. A similar argument and using Lemma \ref{lem:lem_7} shows $G \cong \mathcal{I}_{Z'/P}(-5)$, where $Z'$ is a zero dimensional subscheme of length $l'=-1/2+e$, supported on a plane $P \subset \mathbb{P}^{3}$. As the lengths $l, l'$ are non-negative, we obtain $1/2 \leq e \leq 5/2$. As $l,l'$ are integers, the following possibilities for $e,l$ and $l^{'}$ remain: (1) $e=5/2$, and so $l=0, l^{'}=2$. (2) $e=3/2$, and so $l=1, l^{'}=1$. (3) $e=1/2$, and so $l=2, l^{'}=0$. Thus all the possibilities for the pair $(F,G)$ are
$$
(F,G) \in \{
(\mathcal{I}_{L_{0}}(-1), \mathcal{I}_{Z'_{2}/P}(-5)),
(\mathcal{I}_{L_{1}}(-1), \mathcal{I}_{Z'_{1}/P}(-5)),
(\mathcal{I}_{L_{2}}(-1), \mathcal{I}_{Z'_{0}/P}(-5))\},
$$
where $Z'_{i}$'s are zero dimensional subschemes of length $i$, supported on plane $P$  in $\mathbb{P}^{3}$,  $L_{i}$'s are lines plus $i$ extra points in $\mathbb{P}^{3}$. Notice that we have $\mathcal{I}_{Z'_{0}/P}(-5)=\mathcal{O}_{P}(-5)$.

For the wall $ (\beta + 6.5)^{2}+\alpha^{2}=30.25 $, Proposition \ref{Lemma 9} implies that the semistable objects on this semicircle are given by a pair $(F,G)$ such that $\mathrm{ch}^{-4}(F)=(1,3,9/2,e)$ and $\mathrm{ch}^{-4}(G)=(0,1,-5/2,5/3-e)$.  Therefore, we have $\mathrm{ch}^{-4}(F \otimes \mathcal{O}(-3))=(1,0,0,e-9/2)$, which means $F \otimes \mathcal{O}(-3) \cong \mathcal{I}_{Z}(-4)$, where $Z$ is a zero dimensional subscheme of length $l=9/2-e$ (Lemma \ref{lem:lem_7}), and so $F \cong \mathcal{I}_{Z}(-1)$. A similar argument and using Lemma \ref{lem:lem_7} shows $G \cong \mathcal{I}_{Z'/P}(-6)$, where $Z'$ is a zero dimensional subscheme of length $l'=e+3/2$, supported on a plane $P \subset \mathbb{P}^{3}$. As the lengths $l, l'$ are non-negative integers, we must have  $-3/2 \leq e \leq 9/2$ with $e+1/2 \in \mathbb{Z}$. This leaves the following possibilities for the pair $(F,G)$:
\begin{equation*}
\begin{aligned}
(\mathcal{I}_{Z_{0}}(-1),\mathcal{I}_{Z'_{6}/P}(-6)),
(\mathcal{I}_{Z_{1}}(-1), \mathcal{I}_{Z'_{5}/P}(-6)),
(\mathcal{I}_{Z_{2}}(-1), \mathcal{I}_{Z'_{4}/P}(-6)),
(\mathcal{I}_{Z_{3}}(-1), \mathcal{I}_{Z'_{3}/P}(-6)),\\
(\mathcal{I}_{Z_{4}}(-1), \mathcal{I}_{Z'_{2}/P}(-6)),
(\mathcal{I}_{Z_{5}}(-1), \mathcal{I}_{Z'_{1}/P}(-6)),
(\mathcal{I}_{Z_{6}}(-1), \mathcal{I}_{Z'_{0}/P}(-6)),
\end{aligned}
\end{equation*}
where $Z_{i}$'s and $Z'_{i}$'s are zero dimensional subschemes of length $i$, and $Z'_{i}$'s are supported on  plane $P$  in $\mathbb{P}^{3}$. But we have $\mathcal{I}_{Z_{0}}(-1)=\mathcal{O}(-1)$, and $\mathcal{I}_{Z'_{0}/P}(-6)=\mathcal{O}_{P}(-6)$. Hence the proof for the left side of the hyperbola is completed.

As for the right side of the hyperbola, for the walls $(\beta +4)^{2}+\alpha^{2}=4$, $(\beta + 4.5)^{2}+\alpha^{2}=8.25$, and $ (\beta + 5.5)^{2}+\alpha^{2}=18.25 $, the claim is proven in \cite{R1}. As for the wall $ (\beta + 6.5)^{2}+\alpha^{2}=30.25 $, from Proposition  \ref{Lemma 9}, we have $\mathrm{ch}^{-4}_{\leq 2}(F)= (1,3,9/2)$ for a subobject or quotient of semistable objects with respect  stability conditions on the semicircle. Therefore we have  $\mathrm{ch}(F)= (1,-1,1/2,e)$. Lemma \ref{torsionfactor} implies  $e=-1/6$, and  $F \cong \mathcal{O}(-1)$.  On the other hand, 
for the other corresponding JH factor we have $\mathrm{ch}(G)= (0,1,-6-1/2,15-e)$. Now using Lemma \ref{Gfactor}, we have $G \cong \iota_{P_{*}}\mathcal{I^{\vee}}_{Z_{l'}}(-6)$, where $Z_{l'}$ is a zero dimensional subscheme of length $l'=1/6+6+e=6$.

\begin{figure}[htp]
\centering
\includegraphics[width=14cm]{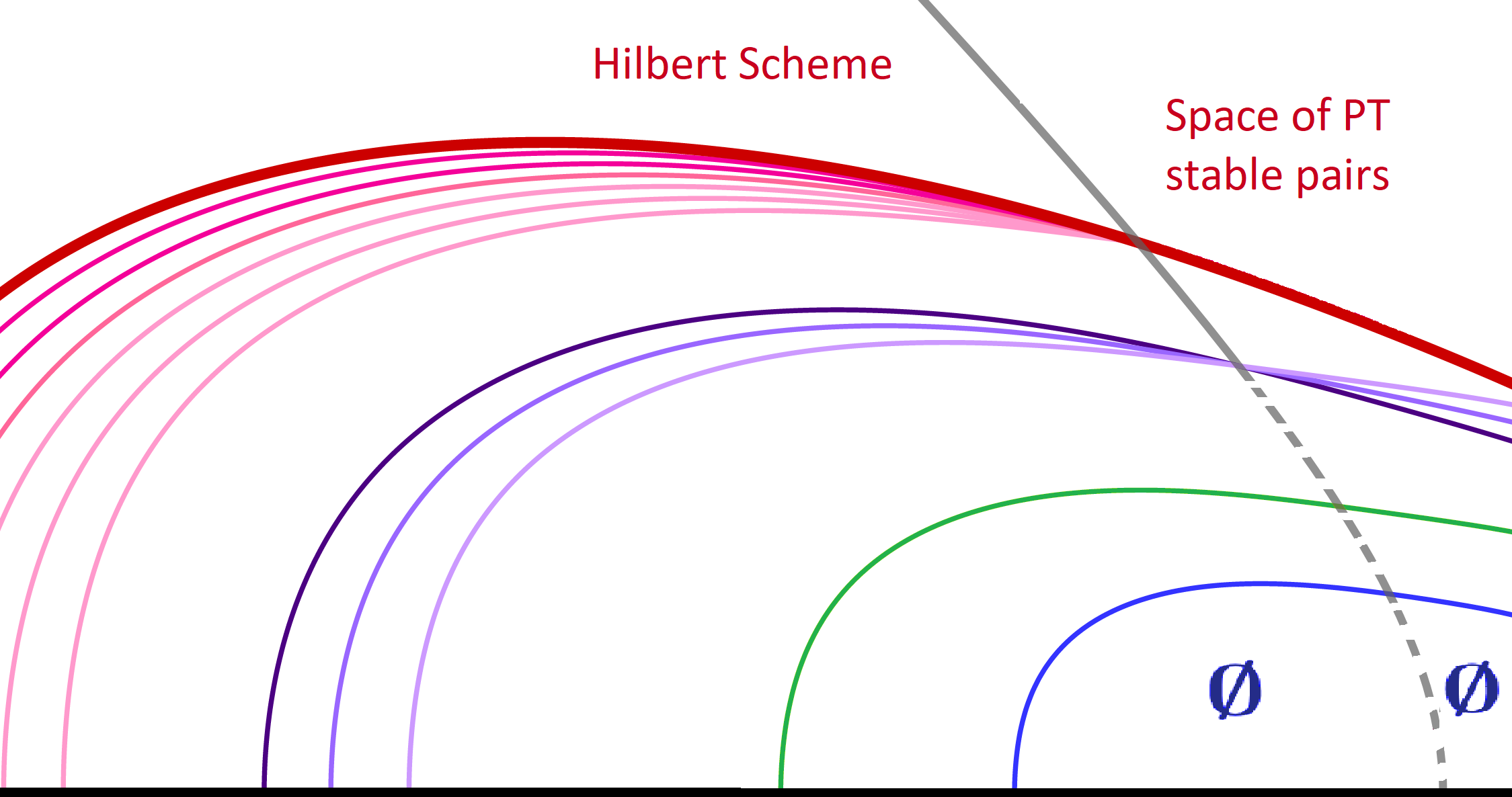}
\caption{walls in Bridgeland stability space (in the $\alpha,\beta$ plane with $s=0.8$)}
\label{walls}
\end{figure}
\end{proof}

\section{Ext computation of the walls on the stable pairs side} \label{Ext computation of the walls on the stable pairs side}

So far, we have described all the possible walls in $\Stab(\mathbb{P}^3)$ close to the left branch of the hyperbola $\beta^2-\alpha^2=12$. To study the wall-crossing and describe the chambers (on the right side of the hyperbola) in section \ref{chambers}, we need to compute all the necessary $\mathrm{Ext}^{1}$-groups associated with the walls. When there is no confusion, we use the notation  $A$ and $B$ for the  subobject and the quotient of the defining short exact sequence of any wall, respectively.

First, we have the following lemma to compute  the pull-backs:
\begin{lem} \label{pullback}

 For a plane $P$, a line $L$,  a line $L' \subset P$, a conic $C_{2}= L \cup L'$,  a 0-dimensional subscheme $Z$, and $\iota_{P}\colon P \hookrightarrow \mathbb{P}^{3}$  we have

\begin{equation*}
   \iota_{P}^{*}(\mathcal{I}_{L}) =
    \begin{cases}
     \mathcal{I}_{L\cap P/P}, &L \not\subset P\\
      \mathcal{O}_{P}(-1) \oplus \mathcal{O}_{L}(-1), & L \subset P
      
      \\
  
    \end{cases}
  \end{equation*}

\begin{equation*}
   \iota_{P}^{*}(\mathcal{O} \xrightarrow{s} \mathcal{O}_{L}(1)) =
    \begin{cases}
     
     \mathcal{I}_{L\cap P}, &L\not\subset P,\\
      &  \text{zero  locus   of  $s$  is  not   $L \cap P$}\\
   \mathcal{O}_{P} \oplus \mathcal{O}_{L \cap P}[-1], &  L\not\subset P,\\
   &  \text{zero  locus   of  $s$  is $ L \cap P$}\\
    
      (\mathcal{O}_{P} \rightarrow \mathcal{O}_{L}(1)) \oplus \mathcal{O}_{L}, & L \subset P
      
      \\
  
    \end{cases}
  \end{equation*}

\begin{equation*}
   \iota_{P}^{*}(\mathcal{O} \xrightarrow{s} \mathcal{O}_{L}(2)) =
    \begin{cases}
     
       \mathcal{I}_{L\cap P}, &L \not\subset P,\\
      &  \text{zero  locus   of  $s$ does not contain $L \cap P$}\\
   \mathcal{O}_{P} \oplus \mathcal{O}_{L \cap P}[-1], &  L \not\subset P,\\
   &  \text{zero  locus   of  $s$  contains $L \cap P$}\\
    
      (\mathcal{O}_{P} \rightarrow \mathcal{O}_{L}(2)) \oplus \mathcal{O}_{L}(1), & L \subset P
      
      \\
  
    \end{cases}
  \end{equation*}
  


\end{lem}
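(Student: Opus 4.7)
The plan is to handle each of the three formulas by case-by-case reduction to two mechanisms: proper intersection (killing higher Tors when $L \not\subset P$), and Lemma \ref{lemhuy} (producing the adjunction triangle when the sheaf is supported on $P$).

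First I would compute the basic building block $\iota_P^*\mathcal{O}_L(n)$ for $n = 0,1,2$. When $L \not\subset P$, the intersection $L \cap P$ has expected dimension $0$, so $L$ and $P$ meet properly, higher Tors vanish, and $\iota_P^*\mathcal{O}_L(n) = \mathcal{O}_{L \cap P / P}$ (the twist by $(n)$ is trivial on the zero-dimensional scheme $L \cap P$). When $L \subset P$, applying Lemma \ref{lemhuy} with $Y = P$ and $\mathcal{F} = \mathcal{O}_L(n)$ yields the triangle $\mathcal{O}_L(n-1)[1] \to \iota_P^*\iota_{P_*}\mathcal{O}_L(n) \to \mathcal{O}_L(n)$. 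Serre duality on $P \cong \mathbb{P}^2$ identifies $\mathrm{Ext}^2_P(\mathcal{O}_L(n),\mathcal{O}_L(n-1))$ with $H^0(L, \mathcal{O}_L(-2))^{\vee} = 0$, so the triangle splits: $\iota_P^*\mathcal{O}_L(n) \cong \mathcal{O}_L(n) \oplus \mathcal{O}_L(n-1)[1]$.

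For the first formula I would apply $\iota_P^*$ to $0 \to \mathcal{I}_L \to \mathcal{O} \to \mathcal{O}_L \to 0$. When $L \not\subset P$, no Tors appear and the short exact sequence pulls back with right-hand term $\mathcal{O}_{L \cap P/P}$, giving $\iota_P^*\mathcal{I}_L = \mathcal{I}_{L \cap P / P}$. When $L \subset P$, the induced map $\mathcal{O}_P \to \mathcal{O}_L \oplus \mathcal{O}_L(-1)[1]$ has its $\mathcal{O}_L(-1)[1]$-component in $\mathrm{Ext}^1_P(\mathcal{O}_P, \mathcal{O}_L(-1)) = H^1(L, \mathcal{O}_L(-1)) = 0$, so it is the canonical surjection onto $\mathcal{O}_L$ paired with zero; the fiber then splits as $\ker(\mathcal{O}_P \twoheadrightarrow \mathcal{O}_L) \oplus \mathcal{O}_L(-1) = \mathcal{O}_P(-1) \oplus \mathcal{O}_L(-1)$.

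For the two stable-pair formulas I would write $E = [\mathcal{O} \xrightarrow{s} \mathcal{O}_L(n)]$ as a shifted cone of $s$ and use that $\iota_P^*$ commutes with cones. If $L \not\subset P$, then $\iota_P^*s : \mathcal{O}_P \to \mathcal{O}_{L \cap P}$ is evaluation of $s$ at the point $L \cap P$; it is surjective exactly when $L \cap P$ is not in the zero scheme of $s$ (giving $\iota_P^*E \cong \mathcal{I}_{L \cap P / P}$), and is zero otherwise (giving the split sum $\mathcal{O}_P \oplus \mathcal{O}_{L \cap P}[-1]$, since the cone of a zero map is a direct sum). If $L \subset P$, the map $\mathcal{O}_P \to \mathcal{O}_L(n) \oplus \mathcal{O}_L(n-1)[1]$ again has second component in $H^1(L, \mathcal{O}_L(n-1)) = 0$, so the $\mathcal{O}_L(n-1)$-summand splits off and the cone reduces to $[\mathcal{O}_P \xrightarrow{s} \mathcal{O}_L(n)] \oplus \mathcal{O}_L(n-1)$. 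The computation is essentially mechanical; the only care point — rather than a true obstacle — is verifying that each splitting is canonical by explicit vanishing of the relevant $\mathrm{Ext}^1$ or $\mathrm{Ext}^2$ group, which in every case reduces to sheaf cohomology of line bundles on $L \cong \mathbb{P}^1$ and its Serre dual on $P \cong \mathbb{P}^2$.
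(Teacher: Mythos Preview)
Your proposal is correct and, in the $L\subset P$ case, slightly more direct than the paper's argument. The paper defers the first two formulas to \cite{R1}, and for the $L\subset P$ case of the stable-pair pullback it rewrites $[\mathcal{O}\to\mathcal{O}_L(n)]$ via the ideal-sheaf sequence so as to reduce to the already-known formula for $\iota_P^*\mathcal{I}_L$. You instead compute $\iota_P^*\mathcal{O}_L(n)$ once and for all from Lemma~\ref{lemhuy}, split the adjunction triangle by the vanishing $\mathrm{Ext}^2_P(\mathcal{O}_L(n),\mathcal{O}_L(n-1))\cong H^0(L,\mathcal{O}_L(-2))^{\vee}=0$, and then let everything follow uniformly from the cone description of the stable pair. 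This buys you a self-contained treatment that handles all three formulas with one mechanism; the paper's route has the virtue of recycling the first formula to obtain the others. The $L\not\subset P$ cases are argued identically in both.
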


\begin{proof} The first two equations are proven in \cite{R1}.

Now, let us compute $\iota_{P}^{*}(\mathcal{O} \rightarrow \mathcal{O}_{L}(1))$. First, assume that $L \not \subset P$, and the zero locus of $s$ is not $L \cap P$. Then we  have  $\iota_{P}^{*}(\mathcal{O} \rightarrow \mathcal{O}_{L}(1))= (\mathcal{O}_{P} \rightarrow \mathcal{O}_{L \cap P})= \mathcal{I}_{L \cap P}$. Secondly, assume that $L \not \subset P$, and the zero locus of $s$ is  $L \cap P$. Then from the sequence $\mathcal{O}_{L}(1)[-1]   \rightarrow (\mathcal{O} \rightarrow \mathcal{O}_{L}(1)) \rightarrow \mathcal{O}$, we have $\iota_{P}^{*}(\mathcal{O} \rightarrow \mathcal{O}_{L}(1))= \mathcal{O}_{P} \oplus \mathcal{O}_{L \cap P}[-1]$.

Now, let us assume  $L  \subset P$. In this case, we have $\iota_{P}^{*}(\mathcal{O} \rightarrow \mathcal{O}_{L}(1))= \iota_{P}^{*}(\mathcal{O} \rightarrow \mathcal{I}_{L}(1) \rightarrow \mathcal{O}(1))= \mathcal{O}_{P} \rightarrow \iota_{P}^{*}(\mathcal{I}_{L}(1)) \rightarrow \mathcal{O}_{P}(1)$. Hence from the first part we obtain $\mathcal{O}_{P} \rightarrow \mathcal{O}_{L} \oplus \mathcal{O}_{P} \rightarrow \mathcal{O}_{P}(1)= (\mathcal{O}_{P} \rightarrow \mathcal{O}_{L}) \oplus \mathcal{O}_{L}(1)=  (\mathcal{O}_{P} \rightarrow \mathcal{O}_{L}(1)) \oplus \mathcal{O}_{L}$.

A similar argument implies the result for $\iota_{P}^{*}(\mathcal{O} \rightarrow \mathcal{O}_{L}(2))$.


\end{proof}

We have the following Lemmas on the  $\mathrm{Ext}^{1}$'s:

\begin{lem} \label{ext1green}
For the wall  $\langle\mathcal{I}_{C_{2}}(-1), \mathcal{O}_{P}(-4)\rangle$, we have:

\begin{eqnarray*}
\mathrm{Ext}^{1}(\mathcal{I}_{C_{2}}(-1),\mathcal{I}_{C_{2}}(-1))=\C^8,&&
\mathrm{Ext}^{1}(\mathcal{O}_{P}(-4), \mathcal{O}_{P}(-4))=\C^3,\\
\mathrm{Ext}^{1}(\mathcal{O}_{P}(-4),\mathcal{I}_{C_{2}}(-1))=\C^{13},&&
\mathrm{Ext}^{1}(\mathcal{I}_{C_{2}}(-1), \mathcal{O}_{P}(-4))=\C.
\end{eqnarray*}

\end{lem}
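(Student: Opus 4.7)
The plan is to compute each of the four $\mathrm{Ext}^1$ groups by applying $\mathrm{RHom}$ to a suitable short exact sequence or resolution for either the source or the target, and reading off the answer from the resulting long exact sequence, using Serre duality on $\P^3$ (where $\omega_{\P^3}=\OO(-4)$) to swap arguments when it is convenient.

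For the two diagonal terms I would identify $\mathrm{Ext}^1$ with tangent spaces to Hilbert schemes. For $\mathcal{O}_P(-4)$, apply $\mathrm{Hom}(-,\mathcal{O}_P(-4))$ to the resolution $0\to\OO(-5)\to\OO(-4)\to\mathcal{O}_P(-4)\to 0$; the only surviving piece is $\mathrm{H}^0(\mathcal{O}_P(1))\cong \C^3$, matching the $3$-dimensional space $(\P^3)^{\vee}$ of planes. For $\mathcal{I}_{C_2}(-1)$, twist away the $-1$ and use the Koszul resolution $0\to\OO(-3)\to\OO(-1)\oplus\OO(-2)\to\mathcal{I}_{C_2}\to 0$ of a planar conic, or equivalently use $0\to\mathcal{I}_{C_2}\to\OO\to\mathcal{O}_{C_2}\to 0$, to read off that deformations decompose as $3$ (choice of plane $P'\supset C_2$) plus $5$ (conic in $\P(\mathrm{H}^0(\mathcal{O}_{P'}(2)))=\P^5$), giving $8$.

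The two off-diagonal terms are the substantive computation. For $\mathrm{Ext}^1(\mathcal{O}_P(-4),\mathcal{I}_{C_2}(-1))$, I would apply $\mathrm{Hom}(-,\mathcal{I}_{C_2}(-1))$ to the resolution of $\mathcal{O}_P(-4)$ above, identify $\mathrm{Hom}(\OO(-k),\mathcal{I}_{C_2}(-1))=\mathrm{H}^0(\mathcal{I}_{C_2}(k-1))$ via the standard sequence for $C_2$, and collect terms in the long exact sequence; a generic $P$ does not contain $C_2$, so only the global-sections piece contributes and produces $13$. For $\mathrm{Ext}^1(\mathcal{I}_{C_2}(-1),\mathcal{O}_P(-4))$, it is cleaner to apply Serre duality to rewrite it as $\mathrm{Ext}^2(\mathcal{O}_P(-4),\mathcal{I}_{C_2}(-5))^{\vee}$, or equivalently to apply $\mathrm{Hom}(-,\mathcal{O}_P(-4))$ to $0\to\mathcal{I}_{C_2}(-1)\to\OO(-1)\to\mathcal{O}_{C_2}(-1)\to 0$; the only contribution comes from a one-dimensional kernel coming from the normal direction of the plane $P'$ containing $C_2$, giving $\C$.

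The main obstacle I expect is bookkeeping the generic configuration: the dimensions genuinely depend on whether $C_2\subset P$ or $C_2\cap P$ is $0$-dimensional, so one must fix the generic case (in particular $P\neq P'$) and verify that the relevant connecting maps in the long exact sequence are surjective/injective as required to land on $13$ and $1$. A useful sanity check throughout is to compare Euler characteristics computed from Chern characters against the total alternating sum of $\mathrm{ext}^i$'s obtained from the long exact sequences, which pins down the $\mathrm{Ext}^1$'s once $\mathrm{Hom}$ and higher $\mathrm{Ext}$'s are controlled.
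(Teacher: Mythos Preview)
Your approach is correct and matches what the paper has in mind; the paper's proof is simply ``This is just a straightforward computation,'' and your outline via resolutions and long exact sequences is exactly the kind of straightforward computation intended.

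One comment: your anticipated obstacle does not arise here. Using the Koszul resolution $0\to\OO(-4)\to\OO(-2)\oplus\OO(-3)\to\II_{C_2}(-1)\to 0$ and the resolution $0\to\OO(-5)\to\OO(-4)\to\OO_P(-4)\to 0$, every term in the relevant long exact sequences is a cohomology group of a line bundle on $\P^3$ or on $P\cong\P^2$, so the answers $\C^{13}$ and $\C$ are \emph{independent} of the relative position of $C_2$ and $P$ (in particular whether $C_2\subset P$). This is why the lemma, unlike the analogous Lemmas for the other walls, states a single value with no case distinction. Your proposed sanity check via Euler characteristics is fine but unnecessary once you observe this.
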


\begin{proof}
This is just a straightforward computation.
\end{proof}

\begin{lem}[{\cite[Lemma 4.3]{R1}}] ~\label{ext1purple1}
For the wall $\langle \mathcal{I}_{L}(-1),\iota_{P_{*}}(\mathcal{I}_{Z_{2}})^{\vee}(-5)\rangle$, we have:

$$
\mathrm{Ext}^{1}(\mathcal{I}_{L}(-1),\mathcal{I}_{L}(-1))= \mathbb{C}^{4},\quad
\mathrm{Ext}^{1}(\iota_{P_{*}}(\mathcal{I}_{Z_{2}})^{\vee}(-5), \iota_{P_{*}}(\mathcal{I}_{Z_{2}})^{\vee}(-5))= \mathbb{C}^{7},$$$$
\mathrm{Ext}^{1}(\iota_{P_{*}}(\mathcal{I}_{Z_{2}})^{\vee}(-5),\mathcal{I}_{L}(-1))= \mathbb{C}^{18},
$$
\begin{equation*}
    \mathrm{Ext}^{1}(\mathcal{I}_{L}(-1),\iota_{P_{*}}(\mathcal{I}_{Z_{2}})^{\vee}(-5))=
    \begin{cases}
    0, & \langle Z_2 \rangle \cap L = \emptyset\\
       \mathbb{C}, & \text{ $\langle Z_2 \rangle \cap L \neq \emptyset$  but $\langle Z_2 \rangle \neq L$}\\
     
      \mathbb{C}^{2}, &  \langle Z_2 \rangle =  L

    \end{cases}
  \end{equation*}
where $\langle Z_2 \rangle$ is the line spanned by $Z_{2}$.

\end{lem}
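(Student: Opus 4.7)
The plan is to reduce all four computations to cohomology on $P = \mathbb{P}^2$ or $\mathbb{P}^3$, evaluated via Koszul resolutions and Serre duality ($\omega_P = \mathcal{O}_P(-3)$). The key ingredients would be: (i) the distinguished triangle
\[
\mathcal{O}_P \;\to\; (\mathcal{I}_{Z_2/P})^{\vee} \;\to\; \mathcal{O}_{Z_2}[-1]
\]
in $\mathrm{D}^b(P)$, obtained by applying $\RHom_P(-,\mathcal{O}_P)$ to $0\to\mathcal{I}_{Z_2/P}\to\mathcal{O}_P\to\mathcal{O}_{Z_2}\to 0$ and using $\RHom_P(\mathcal{O}_{Z_2},\mathcal{O}_P) = \mathcal{O}_{Z_2}[-2]$; (ii) the two adjunctions
\[
\RHom_{\mathbb{P}^3}(\iota_{P*}F,G) = \RHom_P(F,\iota_P^! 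G), \qquad \RHom_{\mathbb{P}^3}(G,\iota_{P*}F) = \RHom_P(L\iota_P^* G, F),
\]
with $\iota_P^! = \iota_P^*(-)\otimes\mathcal{O}_P(1)[-1]$ since $P$ is a hyperplane; (iii) the triangle $F(-1)[1] \to \iota_P^*\iota_{P*}F \to F$ of Lemma \ref{exttrihuy}; and (iv) Lemma \ref{pullback} to compute $\iota_P^*\mathcal{I}_L$.

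For (a), I would identify $\Ext^1(\mathcal{I}_L,\mathcal{I}_L)$ with the tangent space $T_{[L]}\Hilb(\mathbb{P}^3) = H^0(N_{L/\mathbb{P}^3}) = H^0(\mathcal{O}_L(1)^{\oplus 2}) = \mathbb{C}^4$. For (b), I would use adjunction together with the triangle in (iii) to produce a long exact sequence expressing the target in terms of $\Ext^\bullet_P((\mathcal{I}_{Z_2/P})^\vee, (\mathcal{I}_{Z_2/P})^\vee(j))$ for $j \in \{-1,0\}$, each evaluated via (i); the expected answer $\mathbb{C}^7$ matches the moduli dimension, namely three parameters for the plane $P \subset \mathbb{P}^3$ plus four for the length-$2$ subscheme $Z_2 \subset P$.

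For (c), the first adjunction gives, after cancelling twists,
\[
\Ext^1_{\mathbb{P}^3}\!\big(\iota_{P*}(\mathcal{I}_{Z_2})^\vee(-5), \mathcal{I}_L(-1)\big) \;=\; \Hom_P\!\big((\mathcal{I}_{Z_2/P})^\vee, (\iota_P^*\mathcal{I}_L)(5)\big).
\]
Generically $L \not\subset P$, so Lemma \ref{pullback} yields $\iota_P^*\mathcal{I}_L = \mathcal{I}_{q/P}$ with $q = L \cap P$; using the identity $\RHom_P((\mathcal{I}_{Z_2/P})^\vee, -) = \mathcal{I}_{Z_2/P} \otimes^L (-)$ (valid because $\mathcal{I}_{Z_2/P}$ is perfect on the smooth surface $P$) and the absence of Tor when $q \notin Z_2$, the right side becomes $H^0(\mathcal{I}_{Z_2\cup q/P}(5))$. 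Since three points in $\mathbb{P}^2$ always impose independent conditions on quintics, the answer is $\binom{7}{2} - 3 = 18$.

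For (d), the second adjunction with Lemma \ref{pullback} gives, in the generic case $L \not\subset P$,
\[
\Ext^1_{\mathbb{P}^3}\!\big(\mathcal{I}_L(-1), \iota_{P*}(\mathcal{I}_{Z_2})^\vee(-5)\big) \;=\; \Ext^1_P\!\big(\mathcal{I}_{q\cup Z_2/P}, \mathcal{O}_P(-4)\big) \;=\; H^1\!\big(\mathcal{I}_{q\cup Z_2/P}(1)\big)^\vee
\]
by Serre duality on $\mathbb{P}^2$; this vanishes precisely when $q, Z_2$ are non-collinear (equivalently $L \cap \langle Z_2 \rangle = \emptyset$) and equals $\mathbb{C}$ otherwise. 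For the case $L = \langle Z_2 \rangle \subset P$, Lemma \ref{pullback} gives $\iota_P^*\mathcal{I}_L = \mathcal{O}_P(-1) \oplus \mathcal{O}_L(-1)$, and a direct computation of each summand (using $\mathcal{I}_{Z_2/L} = \mathcal{O}_L(-2)$) gives $\mathbb{C} \oplus \mathbb{C} = \mathbb{C}^2$. The hardest part will be the complete case analysis in (d): besides the cases displayed above, I must check that the sub-case $L \subset P$ with $L \neq \langle Z_2\rangle$ still outputs $\mathbb{C}$ (here the $\mathcal{O}_L(-1)$-summand vanishes because $Z_2 \cap L$ is generically empty and Serre duality forces $\Ext^1(\mathcal{O}_L, \mathcal{O}_P(-3)) = 0$), and verify that the intermediate $\Ext^0$ and $\Ext^2$ terms in each long exact sequence vanish so the computation collapses as claimed.
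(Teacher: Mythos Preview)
The paper does not prove this lemma; it is cited verbatim from \cite{R1}. So there is no in-paper proof to compare against. Your approach is, however, exactly the toolkit the paper deploys for the neighbouring Lemmas~\ref{ext1purple2}, \ref{ext1purple3}, \ref{ext1pink}: adjunction for $\iota_{P*}$, the pullback formulas of Lemma~\ref{pullback}, the tensor identities of Lemma~\ref{tensor}, and Serre duality on $P$. In that sense your plan is entirely in line with how the author would argue.

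Two points to tighten. First, in (c) you only treat the generic case $L\not\subset P$, $q\notin Z_2$, whereas the statement asserts $\mathbb{C}^{18}$ unconditionally (and the paper uses this to conclude that $\mathcal{N}'_3$ is a $\mathbb{P}^{17}$-\emph{bundle}, not merely generically so). You will need to run the same computation through the remaining strata --- $q\in Z_2$, and $L\subset P$ --- and check that the answer stays at $18$; the machinery of Lemma~\ref{tensor} handles these without difficulty, but it must be done. Second, in (d), your sketch for the sub-case $L\subset P$, $L\neq\langle Z_2\rangle$ is a little loose: two distinct lines in $P$ always meet, so $\langle Z_2\rangle\cap L\neq\emptyset$ automatically and the target is $\mathbb{C}$; moreover $Z_2\cap L$ can be a single point (when $Z_2$ meets $L$), and in that case the $\mathcal{O}_L(-1)$-summand does see $Z_2$, so the vanishing requires chasing one more connecting map rather than the disjoint-support argument you indicate. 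None of this changes the strategy, but the case analysis in (d) is the place where an actual error is most likely to slip in.
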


In the statement of the following Lemma, all tensor products are in  $\mathrm{D}^{b}(P)$:

\begin{lem} \label{tensor}
Let $p \neq q$ be two different points in a plane $P$, such that $p \subset l$, and  $p \not \subset l'$, for lines $l,l'$ in $P$. Then $\II_{p} \otimes \II_p$ fits into the short exact sequence  $\OO_p \hookrightarrow \II_{p} \otimes \II_p \twoheadrightarrow{}  \II^2_{p}$. Also, $\II_{p \cup q} \otimes \II_p$ fits into the short exact sequence  $\OO_p \hookrightarrow \II_{p \cup q} \otimes \II_p \twoheadrightarrow{}  \II_{p^2 \cup q}$. Furthermore, we have
\begin{eqnarray*}
 \II_p \otimes \OO_p=\OO_p[1] \oplus \OO_p^{\oplus 2},\quad \II_p \otimes \OO_q=\OO_q, \quad
\II_p \otimes \II_q=\II_{p \cup q},\\ 
\quad \II_p \otimes \OO_l=\OO_l(-1) \oplus \OO_p, \quad
\II_p \otimes \OO_{l'}=\OO_{l'}.
\end{eqnarray*}

\end{lem}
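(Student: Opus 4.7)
The plan is to resolve $\II_p$ on the plane $P\cong\P^2$ by its Koszul complex. Choosing linear forms $f_0,f_1$ on $P$ cutting out $p$, one obtains a quasi-isomorphism $\II_p\simeq [\OO_P(-2)\xrightarrow{(-f_1,f_0)^T}\OO_P(-1)^{\oplus 2}]$ concentrated in degrees $[-1,0]$. The derived tensor products in items (3)--(7) then reduce to tensoring this two-term complex with the relevant structure sheaf and reading off cohomology sheaves.

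Concretely, for (3), both $f_0,f_1$ vanish at $p$, so the tensored differential is zero and the complex splits as $\OO_p^{\oplus 2}\oplus\OO_p[1]$. For (4), at $q\neq p$ at least one $f_i$ is invertible, so the tensored differential is injective with cokernel $\OO_q$, simultaneously computing $\II_p\otimes\OO_q=\OO_q$ and $\Tor_1(\II_p,\OO_q)=0$. For (6), choosing $f_0$ so that $l=V(f_0)$, the tensored complex becomes $\OO_l(-2)\xrightarrow{(-f_1,0)^T}\OO_l(-1)^{\oplus 2}$; multiplication by $-f_1|_l$ is injective with cokernel $\OO_p$, while the second coordinate is untouched, yielding $\OO_l(-1)\oplus\OO_p$. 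For (7), the restrictions $f_i|_{l'}$ are two sections of $\OO_{l'}(1)$ without common zero since $p\notin l'$, and the tensored complex becomes the standard short exact sequence $0\to\OO_{l'}(-2)\to\OO_{l'}(-1)^{\oplus 2}\to\OO_{l'}\to 0$ on $l'\cong\P^1$, producing $\OO_{l'}$.

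With these in hand, items (1), (2), (5) will follow by applying $\II_p\otimes(-)$ to the defining short exact sequences of $\II_q$, $\II_p$, and $\II_{p\cup q}$ and reading the resulting $\Tor$ long exact sequences. For (5), (4) gives $\Tor_1(\II_p,\OO_q)=0$, producing the short exact sequence $0\to\II_p\otimes\II_q\to\II_p\to\OO_q\to 0$, whose kernel is identified with $\II_{p\cup q}$ by inspection. For (1), (3) contributes a single copy of $\OO_p$ from $\Tor_1(\II_p,\OO_p)$, while the induced map $\II_p\otimes\II_p\to\II_p\otimes\OO_p=\II_p/\II_p^2$ is the canonical projection with kernel $\II_p^2$, yielding the claimed sequence. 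For (2), apply $\II_p\otimes(-)$ to $0\to\II_{p\cup q}\to\OO_P\to\OO_p\oplus\OO_q\to 0$; by (4) only the $p$-summand contributes to $\Tor_1$, giving one copy of $\OO_p$, and the right-hand map $\II_{p\cup q}\to\II_p/\II_p^2=\OO_p^{\oplus 2}$ has kernel equal to the subsheaf of $\II_{p\cup q}$ agreeing with $\II_p^2$ near $p$ and with $\II_q$ near $q$, which is by definition $\II_{p^2\cup q}$.

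The only technical care required is verifying that the connecting and induced maps produced by $\otimes^L$-functoriality agree with the canonical multiplication and quotient maps $\II_p\otimes\II_p\to\II_p$ and $\II_p\twoheadrightarrow\II_p/\II_p^2$; this is automatic once the Koszul resolution is fixed. I do not anticipate any serious obstacle — the computations are formal — and the main risk is bookkeeping of indices and of the twists $\OO_P(-1), \OO_P(-2)$ when specializing to $p\in l$ versus $p\notin l'$.
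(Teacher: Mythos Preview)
Your proposal is correct: the Koszul resolution $0\to\OO_P(-2)\to\OO_P(-1)^{\oplus 2}\to\II_p\to 0$ reduces each derived tensor product to an explicit two-term complex, and the $\Tor$ long exact sequences then yield the two short exact sequences exactly as you describe. The paper does not give a self-contained argument here but simply cites \cite[Lemma~4.2]{R1} and says the remaining cases follow by the same method; your Koszul computation is the standard one and almost certainly coincides with what is done there, so there is no meaningful difference in approach to report.
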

\begin{proof}
All the cases either proved in {\cite[Lemma 4.2]{R1}} or exactly a similar argument there implies the result. 

\end{proof}

\begin{lem} ~\label{ext1purple2}
For the wall $\langle\mathcal{O}(-1) \rightarrow \mathcal{O}_{L},\iota_{P_{*}}\mathcal{I}_{Z_{1}}^{\vee}(-5)\rangle$, we have 

$$
\mathrm{Ext}^{1}((\mathcal{O}(-1) \rightarrow \mathcal{O}_{L}), (\mathcal{O}(-1) \rightarrow \mathcal{O}_{L}))=\mathbb{C}^{5}, \quad
\mathrm{Ext}^{1}(\iota_{P_{*}}\mathcal{I}_{Z_{1}}^{\vee}(-5), \iota_{P_{*}}\mathcal{I}_{Z_{1}}^{\vee}(-5))=\mathbb{C}^{5},$$

\begin{equation*}
  \mathrm{Ext}^{1}(\iota_{P_{*}}\mathcal{I}_{Z_{1}}^{\vee}(-5), \mathcal{O}(-1) \xrightarrow{s} \mathcal{O}_{L}) =
    \begin{cases}
     \mathbb{C}^{21}, &\text{the zero  locus   of  $s$  is at $Z_1$,}\\
      
     \mathbb{C}^{20}, &    \text{the  zero  locus   of  $s$  is in  $ P$ but $\neq Z_1$}, \\
      
   \mathbb{C}^{19}, &  \text{the zero  locus   of  $s$  is not in $ P$}
    \end{cases}
  \end{equation*}
\begin{equation*}
    \mathrm{Ext}^{1}(\mathcal{O}(-1) \xrightarrow{s} \mathcal{O}_{L},\iota_{P_{*}}\mathcal{I}_{Z_{1}}^{\vee}(-5))=
    \begin{cases}
      \mathbb{C}^{2}, & 
         \text{the zero  locus   of  $s$  is at $Z_1$},\\
     
      \mathbb{C}, &    \text{ the  zero  locus   of  $s$  is in  $ P$ but $\neq Z_1$}, 
         \\
   
      0, & \text{the zero  locus   of  $s$  is not in $ P$}
    \end{cases}
  \end{equation*}
\end{lem}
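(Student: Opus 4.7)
The approach is to realize both $A := [\OO(-1) \xrightarrow{s} \OO_L]$ and $B := \iota_{P_{*}}\II_{Z_1}^{\vee}(-5)$ as cones of maps between well-understood sheaves. For $A$ the triangle is tautological: $\OO(-1) \xrightarrow{s} \OO_L \to A \xrightarrow{+1}$ in $\Db(\P^3)$. For $B$, since $Z_1$ is a reduced point on the smooth surface $P$ one has $\RHom_P(\OO_{Z_1}, \OO_P) = \OO_{Z_1}[-2]$; dualizing $0 \to \II_{Z_1} \to \OO_P \to \OO_{Z_1} \to 0$ on $P$ yields $\OO_P \to \II_{Z_1}^{\vee} \to \OO_{Z_1}[-1] \xrightarrow{+1}$, and pushing forward and twisting produces
$$\iota_{P_{*}}\OO_P(-5) \to B \to \OO_{Z_1}[-1] \xrightarrow{+1}$$
in $\Db(\P^3)$, where we used that $\iota_{P_{*}}\OO_{Z_1}(-5) = \OO_{Z_1}$ as a skyscraper.

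With these triangles the two self-Exts are standard deformation counts obtained by applying $\RHom(A,-)$ and $\RHom(-,A)$ (respectively $\RHom(B,-)$, $\RHom(-,B)$) and decoupling through the triangle: $\Ext^1(A,A)$ parametrizes deformations of the flag $\{p\} \subset L \subset \P^3$ with $p = \{s=0\}$, giving $\dim H^0(N_{L/\P^3}) + 1 = 4+1 = 5$; and $\Ext^1(B,B)$ parametrizes deformations of the flag $Z_1 \subset P \subset \P^3$, giving $3+2=5$. For the four mixed groups, combine both triangles with the adjunction $\RHom(\iota_{P_{*}}(-), -) = \RHom_P(-, \iota_P^{!}(-))$, where $\iota_P^{!} = \iota_P^{*}(1)[-1]$, reducing all computations to Exts on $P$ between $\II_{Z_1}$, $\OO_P$, $\OO_{Z_1}$, and $\iota_P^{*} A$. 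The three subcases in the statement correspond exactly to the three cases of Lemma \ref{pullback} (after twisting by $\OO_P(-1)$): $\iota_P^{*} A$ is $\II_{L \cap P}(-1)$, or $\OO_P(-1) \oplus \OO_{L \cap P}[-1]$, or $(\OO_P(-1) \to \OO_L) \oplus \OO_L(-1)$, according as the zero of $s$ lies off $P$, on $P$ but not at $Z_1$, or at $Z_1$. In each case the computation collapses to cohomology on $\P^2$ evaluated via $0 \to \II_{Z_1} \to \OO_P \to \OO_{Z_1} \to 0$ and standard vanishings for line-bundle cohomology.

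The main obstacle is tracking two independent sources of the jumps between the three cases. Whenever the zero of $s$ lies on $P$, the pullback $\iota_P^{*} A$ acquires an extra $\OO_{L \cap P}[-1]$ summand (or its analogue when $L \subset P$), bumping $\Ext^1(B,A)$ up by $\C$; when that zero further coincides with $Z_1$, an additional connecting map involving the $\OO_{Z_1}[-1]$ factor of $B$ vanishes, contributing another $\C$. Read in the opposite direction the same mechanisms produce the $0,\ \C,\ \C^2$ progression for $\Ext^1(A,B)$. Lemma \ref{ext1purple1} provides a useful template, but has only two subcases there because $A = \II_L(-1)$ is an honest sheaf and the $\OO_{L \cap P}[-1]$ contribution is absent; the delicate step here is verifying that the two effects act independently, in particular when $L \not\subset P$, and that the multiplicities match across the sub-configurations (e.g.\ $L \subset P$ versus $L$ transverse to $P$ but $L\cap P = Z_1$) that are grouped into a single case in the statement.
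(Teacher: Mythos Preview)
Your overall scaffolding (two triangles plus adjunction $\iota_P^{!}=\iota_P^{*}(1)[-1]$) is sound, and the self-$\Ext$ counts via deformation theory match the paper's reasoning verbatim. However, the heart of your case analysis contains a concrete error. You assert that the three cases in the statement correspond exactly to the three cases of Lemma~\ref{pullback}, and in particular that $\iota_P^{*}A$ equals $\II_{L\cap P}(-1)$, $\OO_P(-1)\oplus\OO_{L\cap P}[-1]$, or $(\OO_P(-1)\to\OO_L)\oplus\OO_L(-1)$ ``according as the zero of $s$ lies off $P$, on $P$ but not at $Z_1$, or at $Z_1$.'' This is false: the pullback $\iota_P^{*}A$ depends only on the incidence of $L$, $P$, and the zero $q$ of $s$, and has nothing to do with $Z_1$. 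The three outputs of Lemma~\ref{pullback} are indexed by $\{L\not\subset P,\ q\neq L\cap P\}$, $\{L\not\subset P,\ q=L\cap P\}$, $\{L\subset P\}$, whereas the three cases of the Lemma are indexed by the position of $q$ relative to $P$ and to $Z_1$. These partitions cut the configuration space differently; for instance, the configuration $L\not\subset P$, $q=L\cap P=Z_1$ lands in the \emph{second} pullback case but the \emph{first} case of the statement. Your later caveat about ``sub-configurations \ldots grouped into a single case'' shows you sensed the mismatch, but it directly contradicts the sentence just quoted, and you do not actually carry out the finer subdivision.

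The paper's proof, by contrast, does not use the triangle for $B$ at all. It uses the identification $(\II_{Z_1}^{\vee})^{\vee}\cong\II_{Z_1}$ to write
\[
\Ext^1(B,A)=\mathrm{H}^0\bigl(\iota_P^{*}(\OO\to\OO_L(1))\otimes\II_{Z_1}(5)\bigr),\qquad
\Ext^1(A,B)=\mathrm{H}^1\bigl(\iota_P^{*}(\OO\to\OO_L(1))\otimes\II_{Z_1}(1)\bigr)^{\vee},
\]
and then runs through roughly five subcases determined by Lemma~\ref{pullback} together with the position of $Z_1$ (using Lemma~\ref{tensor} to compute $\II_{Z_1}\otimes(-)$ on $P$), afterwards observing that the answers regroup into the three stated cases. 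Your ``two independent jumps'' heuristic predicts the correct numbers, but to turn it into a proof you must perform this same finer subdivision and check the numbers separately in, e.g., the pairs $\{L\subset P,\ q=Z_1\}$ versus $\{L\not\subset P,\ q=L\cap P=Z_1\}$, and $\{L\subset P,\ q\neq Z_1\}$ versus $\{L\not\subset P,\ q=L\cap P\neq Z_1\}$. The paper's direct tensor-with-$\II_{Z_1}$ route keeps the bookkeeping on $P$ and is shorter than tracking connecting maps through the triangle $\OO_P(-5)\to B\to\OO_{Z_1}[-1]$.
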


\begin{proof}
We notice that $\mathrm{Ext}^{1}(\mathcal{O}(-1) \rightarrow \mathcal{O}_{L}, \mathcal{O}(-1) \rightarrow \mathcal{O}_{L})$  generically parametrizes a line in $\mathbb{P}^{3}$ (given by $\mathbb{G}r(2,4)$) 
together with a choice of one point on the line. Thus we have $\mathrm{Ext}^{1}(\mathcal{O}(-1) \rightarrow \mathcal{O}_{L}, \mathcal{O}(-1) \rightarrow \mathcal{O}_{L})= \mathbb{C}^{5}$. Also, $\mathrm{Ext}^{1}(\iota_{P_{*}}\mathcal{I}_{Z_{1}}^{\vee}(-5), \iota_{P_{*}}\mathcal{I}_{Z_{1}}^{\vee}(-5))$ is the parameter space of one point in a plane, which gives the flag variety $\mathfrak{Fl}_{1}$. Therefore we have  $\mathrm{Ext}^{1}(B,B)=\mathbb{C}^{5}$.

Let $q$ be the zero locus of $s$. Now we compute $\mathrm{Ext}^{1}(B,A)$ as follows:
$$\mathrm{Ext}^{1}(\iota_{P_{*}}\mathcal{I}_{Z_{1}}^{\vee}(-5),\mathcal{O}(-1) \rightarrow \mathcal{O}_{L})=\mathrm{H}^{0}\bigl(\iota_{P}^{*}(\mathcal{O} \rightarrow \mathcal{O}_{L}(1)) \otimes \mathcal{I}_{Z_{1}}(5)\bigr).$$

\begin{description}

\item [\underline{First, assume that $L \not \subset P$}]

 There are two cases:
\begin{itemize}
    \item (I) Zero  locus   of  $s$  is not at  $L \cap P$: Using Lemmas \ref{pullback} and \ref{tensor} we have:

(i) if  $L \cap P= Z_{1}$, then we have $\mathrm{H}^{0}(\iota_{P}^{*}(\mathcal{O} \rightarrow \mathcal{O}_{L}(1)) \otimes \mathcal{I}_{Z_{1}}(5))= \mathrm{H}^{0}(\mathcal{I}_{L \cap P} \otimes \mathcal{I}_{Z_{1}}(5))= \mathrm{H}^{0}(\mathcal{I}_{Z_{1}} \otimes \mathcal{I}_{Z_{1}}(5))=\mathrm{H}^{0}(\OO_{Z_1}) \oplus \mathrm{H}^{0}(\II_{Z_1}^{2}(5))= \mathbb{C}^{19}$.

(ii) if $L  \cap P \neq Z_{1}$, then we have $\mathrm{H}^{0}(\iota_{P}^{*}(\mathcal{O} \rightarrow \mathcal{O}_{L}(1)) \otimes \mathcal{I}_{Z_{1}}(5)))=\mathrm{H}^{0}(\mathcal{I}_{L \cap P} \otimes \mathcal{I}_{Z_{1}}(5))= \mathrm{H}^{0}(\mathcal{I}_{(L \cap P) \cup Z_{1} }(5))= \mathbb{C}^{19}$.

\item (II) Zero  locus   of  $s$  is  $L \cap P$: By Lemma \ref{pullback}, we have $\mathrm{H}^{0}(\iota_{P}^{*}(\mathcal{O} \rightarrow \mathcal{O}_{L}(1)) \otimes \mathcal{I}_{Z_{1}}(5))= \mathrm{H}^{0}((\mathcal{O}_{P} \oplus \mathcal{O}_{L \cap P}[-1]) \otimes \mathcal{I}_{Z_{1}}(5))= \mathrm{H}^0(\mathcal{O}_{P} \otimes \mathcal{I}_{Z_{1}}(5)) \oplus \mathrm{H}^{-1}(\mathcal{O}_{L\cap P} \otimes \mathcal{I}_{Z_{1}}(5))=\mathrm{H}^0(\mathcal{I}_{Z_{1}}(5)) \oplus \mathrm{H}^{-1}(\mathcal{O}_{L\cap P} \otimes \mathcal{I}_{Z_{1}}(5))=\C^{20} \oplus \mathrm{H}^{-1}(\mathcal{O}_{L\cap P} \otimes \mathcal{I}_{Z_{1}}(5))$. Now there are two subcases:
 
(i) if $Z_1\neq P \cap L$, by Lemma \ref{tensor} we have $\mathrm{Ext}^1(B,A)=\C^{20} \oplus \mathrm{H}^{-1}(\mathcal{O}_{L\cap P} \otimes \mathcal{I}_{Z_{1}}(5))=\C^{20} \oplus \mathrm{H}^{-1}(\mathcal{O}_{L\cap P})=\C^{20} \oplus 0 =\mathbb{C}^{20}$. 
 
 (ii) if $Z_1= P \cap L$, by Lemma \ref{tensor} we have $\mathcal{O}_{L \cap P} \otimes \mathcal{I}_{Z_{1}}(1)=\OO_{Z_1}[1] \oplus \OO^{\oplus 2}_{Z_1}$, and therefore $\mathrm{Ext}^1(B,A)=\mathrm{H}^0(\mathcal{I}_{Z_{1}}(5)) \oplus \mathrm{H}^0(\mathcal{O}_{Z_{1}}) = \mathbb{C}^{20} \oplus \C^{1}=\C^{21}.$
 \end{itemize}

\item [\underline{Second, assume that $L \subset P$}] In this case we have $\iota_{P}^{*}(\mathcal{O} \rightarrow \mathcal{O}_{L}(1))=(\mathcal{O}_{P} \rightarrow \mathcal{O}_{L}(1))   \oplus \mathcal{O}_{L}$. Therefore we have $$\Ext^1(B,A)=\mathrm{H}^{0}(\iota_{P}^{*}(\mathcal{O} \rightarrow \mathcal{O}_{L}(1)) \otimes \mathcal{I}_{Z_{1}}(5))= \mathrm{H}^{0}((\mathcal{O}_{P}(5) \rightarrow \mathcal{O}_{L}(6))\otimes \mathcal{I}_{Z_{1}})\oplus \mathrm{H}^{0}(\mathcal{O}_{L}\otimes \mathcal{I}_{Z_{1}}(5))$$$$=\mathrm{H}^{0}(\mathcal{I}_{Z_1}(5) \rightarrow \mathcal{O}_{L}\otimes \mathcal{I}_{Z_{1}}(6))\oplus \mathrm{H}^{0}(\mathcal{O}_{L}\otimes \mathcal{I}_{Z_{1}}(5)).$$
But using \ref{tensor} we have 

\begin{equation*}
     \mathrm{H}^{0}(\mathcal{O}_{L}\otimes \mathcal{I}_{Z_{1}}(5))=
    \begin{cases}
     \mathrm{H}^{0}(\mathcal{O}_{L}(5)) \cong \mathbb{C}^{6}, & Z_{1} \not\subset L,\\
     \mathrm{H}^{0}(\mathcal{O}_{L}(4) \oplus \mathcal{O}_{Z_{1}})=   \mathbb{C}^5 \oplus \mathbb{C} \cong \mathbb{C}^{6}, & Z_{1} \subset L.

    \end{cases}
  \end{equation*}
On the other hand, to compute $\mathrm{H}^{0}(\mathcal{I}_{Z_1}(5) \xrightarrow{s} \mathcal{O}_{L}\otimes \mathcal{I}_{Z_{1}}(6))=\mathrm{H}^{0}(\ker(s))$, we notice that $s$ factors as follows: $\II_{Z_1}(5)  \xrightarrow{s_1}  \OO_L \otimes \II_{Z_1}(5) \xrightarrow{s_2} \OO_{L} \otimes \II_{Z_1}(6).$
Note that $\ker(s_1)=\II_{Z_1}(4)$. Now there are three cases:
\begin{itemize}
 
\item (1) If $Z_1 \not \subset L$, then by Lemma \ref{tensor} we have   $s_2\colon \OO_L(5) \to \OO_L(6)$, which is injective.
 \item (2) If $Z_1 \subset L$, but $Z_1 \neq \text{zero locus of $s$}$, then by Lemma \ref{tensor} we have $s_2\colon \OO_L(4) \oplus \OO_{Z_1} \to \OO_L(5) \oplus \OO_{Z_1}$, which is injective.
 \item (3) If $Z_1= \text{zero locus of $s \subset L$}$, then $s_2 |_{\OO_{Z_1}}=0$.
\end{itemize}
For cases (1) and (2), where $s_2$ is injective, we have
$$\mathrm{H}^{0}(\ker(s))=\mathrm{H}^{0}(\ker(s_1))=\mathrm{H}^0(\II_{Z_1}(4))=\C^{14}.$$
For case (3), similarly using Lemma \ref{tensor}, we have
$$\mathrm{H}^{0}(\ker(s))=\mathrm{H}^{0}(\ker(\II_{Z_1}(5) \to \OO_L(4)))=\mathrm{H}^{0}(\OO_P(4))=\C^{15}.$$
Thus $\Ext^1(B,A)$ is 
\begin{equation*}
     \mathrm{H}^{0}(\mathcal{I}_{Z_1}(5) \rightarrow \mathcal{O}_{L}\otimes \mathcal{I}_{Z_{1}}(6))\oplus \mathrm{H}^{0}(\mathcal{O}_{L}\otimes \mathcal{I}_{Z_{1}}(5))=
    \begin{cases}
     \C^{14}\oplus \C^6=\C^{20}, & Z_1\neq q,\\
     \C^{15} \oplus \C^6=\C^{21}, & Z_1=q \subset L.

    \end{cases}
  \end{equation*}

\end{description}

For the last part, using Serre duality we have
$$\mathrm{Ext}^{1}(A,B)= \mathrm{Ext}^{1}(\mathcal{O}(-1) \rightarrow \mathcal{O}_{L},\iota_{P_{*}}\mathcal{I}_{Z_{1}}^{\vee}(-5))= \mathrm{Ext}^{1}((\iota_{P}^{*}(\mathcal{O}(-1) \rightarrow \mathcal{O}_{L}),\mathcal{I}_{Z'_{1}}^{\vee}(-5))$$$$=\mathrm{Ext}^{1}(\mathcal{I}_{Z'_{1}}^{\vee}(-5), (\iota_{P}^{*}(\mathcal{O}(-1) \rightarrow \mathcal{O}_{L}) \otimes \OO(-3))^{\vee}=\mathrm{H}^1((\iota_{P}^{*}(\mathcal{O}(-1) \rightarrow \mathcal{O}_{L}) \otimes \II_{Z_1}(2))^{\vee}.$$

There are three cases:

\begin{description}

\item [\underline{(1) $L \not \subset P$ and  $L\cap P$ is the  zero locus of $s$}]: From Lemmas \ref{pullback} and \ref{tensor}, we have
$$\mathrm{Ext}^{1}(A,B)=\mathrm{H}^1((\iota_{P}^{*}(\mathcal{O} \rightarrow \mathcal{O}_{L}(1)) \otimes \II_{Z_1}(1))^{\vee}=\mathrm{H}^{1}(\II_{Z_1}(1))^{\vee} \oplus \mathrm{H}^{0}(\OO_{L \cap P} \otimes \II_{Z_1}(1))^{\vee}$$
\begin{equation*}
   =0 \oplus \mathrm{H}^{0}(\OO_{L \cap P} \otimes \II_{Z_1}(1))^{\vee}=
    \begin{cases}
     \mathrm{H}^{0}(\OO_{P \cap L})^{\vee} =\C, & L \cap P \neq Z_{1}\\
    \mathrm{H}^{0}(\OO_{L \cap P}[1] \oplus \OO_{L \cap P}^{\oplus 2})^{\vee}=0 \oplus \C^2 =\C^2,  &  L \cap P=Z_{1}
    \\

    \end{cases}
  \end{equation*}

\item [\underline{ (2) $L \not \subset P$, and  $L \cap P$ is not  the  zero section of $s$}]: From Lemmas \ref{pullback} and \ref{tensor}, we have
 $$\mathrm{Ext}^{1}(A,B)= 
\mathrm{H}^{1}(\mathcal{I}_{L \cap P} \otimes \mathcal{I}_{Z_{1}}(1))^{\vee}$$
\begin{equation*}
   =
    \begin{cases}
     \mathrm{H}^{1}(\II_{(P \cap L) \cup Z_1 }(1))^{\vee}=0, & L \cap P \neq Z_{1}\\
    \mathrm{H}^{1}(\OO_{L \cap P})^{\vee} \oplus \mathrm{H}^{1}(\II_{L \cap P}^2(1))^{\vee}=0 \oplus 0 =0,  &  L \cap P=Z_{1}
    \\

    \end{cases}
  \end{equation*}

\item [\underline{(3) $L \subset P$}]:
Again using Lemmas \ref{pullback}, we have
$$\mathrm{Ext}^{1}(A,B)=\mathrm{H}^{1}((\mathcal{O}_{P} \xrightarrow{s} \mathcal{O}_{L }(1))\otimes \mathcal{I}_{Z_{1}}(1))^{\vee} \oplus \mathrm{H}^{1}(\mathcal{O}_{L}\otimes \mathcal{I}_{Z_{1}}(1))^{\vee}$$$$=\mathrm{H}^{1}((\mathcal{O}_{P} \xrightarrow{s} \mathcal{O}_{L }(1))\otimes \mathcal{I}_{Z_{1}}(1))^{\vee} \oplus 0.$$
Exactly the same argument as above (just twisting everything by $-4$) implies
\begin{equation*}
     \mathrm{H}^{0}(\mathcal{I}_{Z_1}(1) \rightarrow \mathcal{O}_{L}\otimes \mathcal{I}_{Z_{1}}(2))=
    \begin{cases}
    0, & Z_1\neq \text{zero locus of $s$},\\
     \C, & Z_1= \text{zero locus of $s \subset L$}.

    \end{cases}
  \end{equation*}
Now, taking the cohomology long exact sequence of $\mathcal{O}_{L} \otimes \mathcal{I}_{Z_{1}}(2)[-1] \rightarrow (\mathcal{O}_{P} \rightarrow \mathcal{O}_{L }(1))\otimes \mathcal{I}_{Z_{1}}(1)  \rightarrow \mathcal{O}_{P} \otimes \mathcal{I}_{Z_{1}}(1)$ and noticing that $\mathrm{H}^{0}(\mathcal{O}_{P} \otimes \mathcal{I}_{Z_{1}}(1))=\mathrm{H}^{0}(\mathcal{I}_{Z_{1}}(1))=\C^2,$
and (by Lemma \ref{tensor})
\begin{equation*}
    \mathrm{H}^{0}(\mathcal{O}_{L} \otimes \mathcal{I}_{Z_{1}}(2)) =
    \begin{cases}
     \mathrm{H}^{0}(\mathcal{O}_{L}(2)) \cong \mathbb{C}^{3}, & Z_{1} \not\subset L,\\
     \mathrm{H}^{0}(\mathcal{O}_{L}(1) \oplus \mathcal{O}_{Z_{1}})=   \mathbb{C}^{2} \oplus \mathbb{C} \cong \mathbb{C}^{3}, & Z_{1} \subset L,

    \end{cases}
  \end{equation*}
we have
\begin{center}
\begin{tikzpicture}[descr/.style={fill=white,inner sep=1.5pt}]
        \matrix (m) [
            matrix of math nodes,
            row sep=1em,
            column sep=2.5em,
            text height=1.5ex, text depth=0.25ex
        ]
        {  & 0 & \mathrm{H}^{0}((\mathcal{O}_{P} \rightarrow \mathcal{O}_{L }(1))\otimes \mathcal{I}_{Z_{1}}(1)) & \C^2\\
            &  \mathbb{C}^{3} & \mathrm{H}^{1}((\mathcal{O}_{P} \rightarrow \mathcal{O}_{L }(1))\otimes \mathcal{I}_{Z_{1}}(1))& 0.\\
        };

        \path[overlay,->, font=\scriptsize,>=latex]

        (m-1-2) edge (m-1-3)
        (m-1-3) edge (m-1-4)
        (m-1-4) edge[out=355,in=175] node[descr,yshift=0.3ex] {} (m-2-2)
        (m-2-2) edge (m-2-3)
        (m-2-3) edge (m-2-4)
     ;
\end{tikzpicture}
\end{center}
 Therefore, in this case we have
 \begin{equation*}
     \Ext^1(A,B)=\mathrm{H}^{1}(\mathcal{I}_{Z_1}(1) \rightarrow \mathcal{O}_{L}\otimes \mathcal{I}_{Z_{1}}(2))^{\vee}=
    \begin{cases}
    \C, & Z_1\neq \text{zero locus of $s$},\\
     \C^2, & Z_1= \text{zero locus of $s \subset L$}.

    \end{cases}
  \end{equation*}

\end{description}

\end{proof}

\begin{lem} ~\label{ext1purple3}
For the wall $\langle\mathcal{O}(-1) \rightarrow \mathcal{O}_{L}(1), \mathcal{O}_{P}(-5)\rangle$, we have:

$$\mathrm{Ext}^{1}(\mathcal{O}(-1) \rightarrow \mathcal{O}_{L}(1),\mathcal{O}(-1) \rightarrow \mathcal{O}_{L}(1))=\mathbb{C}^{6},\quad
\mathrm{Ext}^{1}(\mathcal{O}_{P}(-5),\mathcal{O}_{P}(-5))= \mathbb{C}^{3},$$

\begin{equation*}
  \mathrm{Ext}^{1}(\mathcal{O}_{P}(-5), \mathcal{O}(-1) \xrightarrow{s} \mathcal{O}_{L}(1)) =
    \begin{cases}
     \mathbb{C}^{22}, &\text{ $ L \subset P$}\\
     \mathbb{C}^{21}, &\text{$L\not\subset P$,  and the zero  locus   of  $s \supset L \cap P$},\\
   \mathbb{C}^{20}, &  \text{$L\not\subset P$, and the zero  locus   of  $s \not \supset L \cap P$}

    \end{cases}
  \end{equation*}

\begin{equation*}
    \mathrm{Ext}^{1}(\mathcal{O}(-1) \xrightarrow{s} \mathcal{O}_{L}(1),\mathcal{O}_{P}(-5))=
    \begin{cases}
      \mathbb{C}^{2}, &    L \subset P, 
      
      \\
      \mathbb{C}, & \text{$L \not\subset P$  and the zero  locus   of  $s \supset L \cap P$}
      \\
      0, & \text{$L\not\subset P$, and the zero  locus   of  $s \not \supset L \cap P$}
    \end{cases}
  \end{equation*}

\end{lem}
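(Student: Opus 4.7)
The plan mirrors the proof of Lemma \ref{ext1purple2}. For the self-Ext groups, $\mathrm{Ext}^1(A,A) = \mathbb{C}^6$ is the tangent space to the moduli of PT stable pairs of shape $[\mathcal{O}(-1) \to \mathcal{O}_L(1)]$, which (after twisting by $\mathcal{O}(1)$ to $[\mathcal{O} \to \mathcal{O}_L(2)]$) is parametrized by the line $L \subset \mathbb{P}^3$ (dimension $\dim \mathbb{G}r(2,4) = 4$) together with the zero locus of the defining section, a length-$2$ divisor on $L$ ($2$ further parameters). Similarly, $\mathrm{Ext}^1(B, B) = \mathbb{C}^3$ is the tangent space to $(\mathbb{P}^3)^\vee$.

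For $\mathrm{Ext}^1(B, A)$, I would apply adjunction along $\iota_P \colon P \hookrightarrow \mathbb{P}^3$ (using $\iota_P^!(-) = \iota_P^*(-) \otimes \mathcal{O}_P(1)[-1]$) to obtain
$$\mathrm{Ext}^1(B, A) \;=\; \mathrm{Hom}\bigl(\mathcal{O}_P(-5),\, \iota_P^* A \otimes \mathcal{O}_P(1)\bigr) \;=\; \mathrm{H}^0\bigl(\iota_P^*[\mathcal{O} \to \mathcal{O}_L(2)] \otimes \mathcal{O}_P(5)\bigr)$$
after absorbing a twist by $\mathcal{O}(1)$. Then Lemma \ref{pullback} splits this into the three stated cases: when $L \not\subset P$ and the zero locus of $s$ avoids $L \cap P$, the pullback is $\mathcal{I}_{L\cap P/P}$ giving $\mathrm{H}^0(\mathcal{I}_{L\cap P/P}(5)) = 20$; when $L \cap P$ lies in the zero locus of $s$, the pullback is $\mathcal{O}_P \oplus \mathcal{O}_{L\cap P}[-1]$ giving $21 + 0 = 21$; and when $L \subset P$ the pullback is $[\mathcal{O}_P \to \mathcal{O}_L(2)] \oplus \mathcal{O}_L(1)$, for which I would use the hypercohomology spectral sequence with $\mathcal{H}^0 = \ker = \mathcal{I}_{L/P}(5) = \mathcal{O}_P(4)$ to obtain $\mathrm{H}^0(\mathcal{O}_P(4)) + \mathrm{H}^0(\mathcal{O}_L(6)) = 15+7 = 22$.

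For $\mathrm{Ext}^1(A, B)$ I would apply the dual adjunction $\mathrm{Ext}^1(A, \iota_{P*}\mathcal{O}_P(-5)) = \mathrm{Ext}^1(\iota_P^* A, \mathcal{O}_P(-5))$ and then Serre duality on $P \cong \mathbb{P}^2$ (with $\omega_P = \mathcal{O}_P(-3)$) to rewrite this as $\mathrm{H}^1(\iota_P^*A \otimes \mathcal{O}_P(2))^\vee$. The three cases of Lemma \ref{pullback} reduce respectively to $\mathrm{H}^1(\mathcal{I}_{L\cap P/P}(1))^\vee = 0$ (via the standard SES $0 \to \mathcal{I}_{L\cap P/P}(1) \to \mathcal{O}_P(1) \to \mathcal{O}_{L\cap P} \to 0$), the shifted-skyscraper contribution $\mathrm{H}^1(\mathcal{O}_{L\cap P}[-1])^\vee = \mathrm{H}^0(\mathcal{O}_{L\cap P})^\vee = \mathbb{C}$, and the hypercohomology $\mathrm{H}^1([\mathcal{O}_P(1) \to \mathcal{O}_L(2)])^\vee = \mathrm{H}^0(\mathrm{coker})^\vee = \mathrm{H}^0(\mathcal{O}_{Z_2})^\vee = \mathbb{C}^2$.

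The main technical point is the hypercohomology computation of the $2$-term complex $[\mathcal{O}_P \to \mathcal{O}_L(2)]$ when $L \subset P$: one must correctly identify $\ker = \mathcal{I}_{L/P} = \mathcal{O}_P(-1)$ and $\mathrm{coker} = \mathcal{O}_{Z_2}$, and verify that the Lemma \ref{pullback} decomposition actually splits in $\mathrm{D}^b(P)$ (which follows from $\mathrm{Ext}^1(\mathcal{O}_P, \mathcal{O}_L(1)) = \mathrm{H}^1(\mathcal{O}_L(1)) = 0$).
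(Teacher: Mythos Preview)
Your approach is essentially identical to the paper's: both compute the self-$\Ext$ groups via the moduli interpretation, reduce $\Ext^1(B,A)$ to $\mathrm{H}^0\bigl(\iota_P^*[\mathcal{O}\to\mathcal{O}_L(2)]\otimes\mathcal{O}(5)\bigr)$ by adjunction and then apply Lemma~\ref{pullback} case by case, and handle $\Ext^1(A,B)$ by adjunction followed by Serre duality on $P$. The only slip is a typo in the $L\subset P$ case for $\Ext^1(A,B)$: after tensoring with $\mathcal{O}_P(2)$ the two-term complex is $[\mathcal{O}_P(1)\to\mathcal{O}_L(3)]$ (not $\mathcal{O}_L(2)$), whose cokernel is indeed $\mathcal{O}_{Z_2}$, so your identification $\mathrm{H}^1=\mathrm{H}^0(\mathcal{O}_{Z_2})=\mathbb{C}^2$ is correct once you also note $\mathrm{H}^1(\ker)=\mathrm{H}^1(\mathcal{O}_P)=0$ and that the remaining summand $\mathcal{O}_L(2)$ contributes nothing to $\mathrm{H}^1$.
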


\begin{proof}
 We notice that $\mathrm{Ext}^{1}((\mathcal{O} \rightarrow \mathcal{O}_{L}(2)) \otimes \mathcal{O}(-1), (\mathcal{O} \rightarrow \mathcal{O}_{L}(2)) \otimes \mathcal{O}(-1))$  generically parametrizes a line in $\mathbb{P}^{3}$ (given by $\mathbb{G}r(2,4)$) 
together with a choice of two points on the line. Thus we have $\mathrm{Ext}^{1}(\mathcal{O}(-1) \rightarrow \mathcal{O}_{L}(1), \mathcal{O}(-1) \rightarrow \mathcal{O}_{L}(1))= \mathbb{C}^{6}$. Also, $\mathrm{Ext}^{1}(\iota_{P_{*}}(\mathcal{O}_{P})(-5), \iota_{P_{*}}(\mathcal{O}_{P})(-5))$ is the parameter space of a plane in  $\mathbb{P}^{3}$ which is given by $(\mathbb{P}^{3})^{*}$, and thus   $\mathrm{Ext}^{1}(B,B)=\mathbb{C}^{3}$.

Let $q \cup q'$ be the zero locus of $s$. Now we compute $\mathrm{Ext}^{1}(B,A)$ as follows: $$\mathrm{Ext}^{1}(B,A)=\mathrm{Ext}^{1}(\iota_{P_{*}}(\mathcal{O}_{P})(-5),\mathcal{O}(-1) \rightarrow \mathcal{O}_{L}(1))=\mathrm{Ext}^{1}(\mathcal{O}_{P}(-5),\iota_{P}^{!}(\mathcal{O}(-1) \rightarrow \mathcal{O}_{L}(1)))$$$$
=\mathrm{Hom}(\mathcal{O}_{P}(-5),\iota_{P}^{*}(\mathcal{O} \rightarrow \mathcal{O}_{L}(2)))=\mathrm{H}^{0}(\iota_{P}^{*}(\mathcal{O} \rightarrow \mathcal{O}_{L}(2)) \otimes \mathcal{O}(5)).$$
\begin{description}
\item[First, assume that $L \not \subset P$]  By Lemma \ref{pullback} we have:

\begin{equation*}
  \mathrm{Ext}^{1}(B,A)=
    \begin{cases}
     \mathrm{H}^{0}((\mathcal{O}_{P}(5) \oplus \mathcal{O}_{L \cap P}[-1]))= \C^{21} \oplus 0, &q \cup q'  \supset L \cap P,\\
  \mathrm{H}^{0}(\mathcal{I}_{L \cap P }(5))= \mathbb{C}^{20}, &  q \cup q' \not \supset L \cap P.
    \end{cases}
  \end{equation*}

\item [Second, assume that $L \subset P$]  Using Lemma \ref{pullback},  we have $\mathrm{H}^{0}(\iota_{P}^{*}(\mathcal{O} \rightarrow \mathcal{O}_{L}(2)) \otimes \mathcal{O}(5))= \mathrm{H}^{0}((\mathcal{O}_{P} \rightarrow \mathcal{O}_{L}(2))\otimes \mathcal{O}(5)) \oplus \mathrm{H}^{0}(\mathcal{O}_{L}(1)\otimes \mathcal{O}(5))= \mathrm{H}^{0}((\mathcal{O}_{P}(5) \rightarrow \mathcal{O}_{L}(7)))\oplus \mathrm{H}^{0}(\mathcal{O}_{L}(6))$. 

We have $\mathrm{H}^{0}((\mathcal{O}_{P}(5) \xrightarrow{s} \mathcal{O}_{L}(7)))=\mathrm{H}^{0}(\ker(s))=\mathrm{H}^{0}(\II_{L}(5))=\mathbb{C}^{15}$. Also, we have $\mathrm{H}^{0}(\mathcal{O}_{L}(6))=\mathbb{C}^{7}$. Therefore,  $\mathrm{H}^{0}(\iota_{P}^{*}(\mathcal{O} \rightarrow \mathcal{O}_{L}(1)) \otimes \mathcal{I}_{Z'_{1}}(5))=\mathbb{C}^{15} \oplus  \mathbb{C}^{7}= \mathbb{C}^{22}$. 
\end{description}
Finally, for $\mathrm{Ext}^1(A,B)$ using Serre duality, we have
$$\mathrm{Ext}^{1}((\mathcal{O}(-1) \rightarrow \mathcal{O}_{L}(1)),\mathcal{O}_{P}(-5))=\mathrm{H}^{1}(\iota_{P}^{*}(\mathcal{O} \rightarrow \mathcal{O}_{L}(2)) \otimes \mathcal{O}(1))^{\vee}.$$
\begin{description}

\item[\underline{1)  If $L \subset P$}]:
$$\mathrm{Ext}^{1}(A,B)=\mathrm{H}^{1}(\mathcal{O}_{P}(1) \xrightarrow{s} \mathcal{O}_{L}(3))^{\vee}\oplus \mathrm{H}^{1}(\mathcal{O}_{L}(2))^{\vee}= \mathrm{H}^{1}(\mathcal{O}_{P}(1) \xrightarrow{s} \mathcal{O}_{L}(3))^{\vee} \oplus 0.
$$
We have $\mathrm{H}^0(\OO_P(1) \xrightarrow{s} \OO_L(3))=\mathrm{H}^0(\ker(s))=\mathrm{H}^0(\II_{L}(1))=\C$. Now, taking the long exact sequence of  $\mathcal{O}_{L}(3)[-1] \rightarrow (\mathcal{O}_{P}(1) \xrightarrow{s} \mathcal{O}_{L}(3)) \rightarrow \mathcal{O}_{P}(1) $, gives

\begin{center}

\begin{tikzpicture}[descr/.style={fill=white,inner sep=1.5pt}]
        \matrix (m) [
            matrix of math nodes,
            row sep=1em,
            column sep=2.5em,
            text height=1.5ex, text depth=0.25ex
        ]
        {&  \mathrm{H}^{-1}(\mathcal{O}_{L}(3))=0 & \mathrm{H}^{0}(\mathcal{O}_{P}(1) \xrightarrow{s} \mathcal{O}_{L}(3)) &\mathrm{H}^{0}(\mathcal{O}_{P}(1))= \mathbb{C}^{3} \\
            &  \mathrm{H}^{0}(\mathcal{O}_{L}(3))= \mathbb{C}^{4} & \mathrm{H}^{1}(\mathcal{O}_{P}(1) \xrightarrow{s} \mathcal{O}_{L}(3)) &\mathrm{H}^{1}(\mathcal{O}_{P}(1))=0,\\
        };

        \path[overlay,->, font=\scriptsize,>=latex]
        
        (m-1-2) edge (m-1-3)
        (m-1-3) edge (m-1-4)
        (m-1-4) edge[out=355,in=175] node[descr,yshift=0.3ex] {} (m-2-2)
        (m-2-2) edge (m-2-3)
        (m-2-3) edge (m-2-4)
     ;
\end{tikzpicture}
\end{center}
which implies $\mathrm{H}^{1}(\mathcal{O}_{P}(1) \rightarrow \mathcal{O}_{L}(3))=\mathbb{C}^{2}$.
\item[\underline{2) If $L \not \subset P$}], using Lemma \ref{pullback}, we have
\begin{equation*}
  \mathrm{Ext}^{1}(A,B) =
    \begin{cases}
      \mathrm{H}^{1}(\mathcal{O}_{P}(1))^{\vee} \oplus  \mathrm{H}^{0}(\mathcal{O}_{L \cap P})^{\vee}=0 \oplus \mathbb{C}, &q \cup q'  \supset L \cap P,\\
   \mathrm{H}^{1}(\mathcal{I}_{L \cap P}(1))^{\vee}=0, & q \cup q' \not \supset L \cap P.

    \end{cases}
  \end{equation*}
\end{description}

\end{proof}

\begin{lem} ~\label{ext1pink}
For the wall $\langle\mathcal{O}(-1),\iota_{P_{*}}\mathcal{I}_{Z_{6}}^{\vee}(-6)\rangle$, we have:

$$\mathrm{Ext}^{1}(\mathcal{O}(-1),\mathcal{O}(-1))=0,\quad
\mathrm{Ext}^{1}(\iota_{P_{*}}\mathcal{I}_{Z_{6}}^{\vee}(-6),\iota_{P_{*}}\mathcal{I}_{Z_{6}}^{\vee}(-6))= \mathbb{C}^{15},$$$$
\mathrm{Ext}^{1}(\iota_{P_{*}}\mathcal{I}_{Z_{6}}^{\vee}(-6), \mathcal{O}(-1))=\mathbb{C}^{22},
$$
\begin{equation*}
    \mathrm{Ext}^{1}(\mathcal{O}(-1),\iota_{P_{*}}\mathcal{I}_{Z_{6}}^{\vee}(-6))=
    \begin{cases}
      \mathbb{C}^{3}, &    \text{6 points on a line}\\
     \mathbb{C}^{2}, &    \text{5 points on a line}\\
     \mathbb{C}, &     \text{6 points on a conic}\\
     0, &    \text{generic  points}\\
    \end{cases}
 \end{equation*}

\end{lem}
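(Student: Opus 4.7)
The plan is to reduce each $\Ext^1$ group to a sheaf cohomology computation on the plane $P$, using adjunction along $\iota_P\colon P \hookrightarrow \P^3$ together with a single distinguished triangle that resolves $\mathcal{I}_{Z_6}^{\vee}$. Interpreting $\mathcal{I}_{Z_6}^{\vee}$ as $\RlHom_P(\mathcal{I}_{Z_6}, \OO_P)$, applying $\RlHom(-, \OO_P)$ to $0 \to \mathcal{I}_{Z_6} \to \OO_P \to \OO_{Z_6} \to 0$ and combining with local duality $\RlHom(\OO_{Z_6}, \OO_P) = \OO_{Z_6}[-2]$ (valid because $Z_6$ is a codimension-$2$ l.c.i.\ in the smooth surface $P$ with $\omega_{Z_6/P} \cong \OO_{Z_6}$) yields the distinguished triangle
$$ \OO_{Z_6}[-2] \to \OO_P \to \mathcal{I}_{Z_6}^{\vee} $$
on $P$; twisting it appropriately is the single computational engine.

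The case $\Ext^1(A,A) = H^1(\P^3, \OO) = 0$ is immediate. For $\Ext^1(B,B)$, adjunction gives $\Ext^i(\iota_{P*}B', \iota_{P*}B') = \Ext^i_P(\iota_P^*\iota_{P*}B', B')$ with $B' = \mathcal{I}_{Z_6}^{\vee}(-6)$, and combining Lemma \ref{exttrihuy} with the triangle above reduces this to a sheaf-cohomology computation on $P$. A cleaner alternative, sufficient here because $B$ is stable, is to recognise $\Ext^1(B,B)$ as the tangent space to the moduli of such stable objects, namely the parameter space of pairs (plane $P \subset \P^3$, length-$6$ subscheme $Z_6 \subset P$), of dimension $3 + 12 = 15$.

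For $\Ext^1(B, A)$, Serre duality on $\P^3$ gives $\Ext^1(B, A) = H^2(P, \mathcal{I}_{Z_6}^{\vee}(-9))^{\vee}$. Twisting the triangle by $\OO_P(-9)$ and taking cohomology reduces to the short exact sequence
$$ 0 \to H^0(\OO_{Z_6}) \xrightarrow{\phi} H^2(\OO_P(-9)) \to H^2(\mathcal{I}_{Z_6}^{\vee}(-9)) \to 0, $$
where, via Serre duality on $P$, the map $\phi$ is dual to the evaluation $H^0(\OO_P(6)) \to H^0(\OO_{Z_6})$. Since any six distinct points impose independent conditions on plane sextics, $\phi$ is injective and one reads off $\Ext^1(B, A) = \C^{28-6} = \C^{22}$.

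The delicate computation is $\Ext^1(A, B) = H^1(P, \mathcal{I}_{Z_6}^{\vee}(-5))$. The twisted triangle now yields
$$ 0 \to H^1(\mathcal{I}_{Z_6}^{\vee}(-5)) \to H^0(\OO_{Z_6}) \xrightarrow{\psi} H^2(\OO_P(-5)) \to H^2(\mathcal{I}_{Z_6}^{\vee}(-5)) \to 0, $$
with $\psi$ Serre-dual to the conic-evaluation map $\mathrm{ev}\colon H^0(\OO_P(2)) \to H^0(\OO_{Z_6})$, so $\Ext^1(A, B) \cong \mathrm{coker}(\mathrm{ev})^{\vee}$. The four cases of the lemma then follow from a hierarchical configuration analysis of $Z_6$ with respect to plane conics: for generic $Z_6$, $\mathrm{ev}$ is an isomorphism and the cokernel vanishes; if $Z_6$ lies on a conic but no five of the points are collinear, the defining conic alone contributes $\ker(\mathrm{ev}) = \C$; if exactly five points are collinear on a line $L$, every conic through $Z_6$ must contain $L$, so $\ker(\mathrm{ev})$ is the pencil of conics $L \cup L'$ with $L'$ in the pencil through the sixth point, giving $\C^2$; if all six are collinear on $L$, then $\ker(\mathrm{ev}) = H^0(\II_L(2)) = \C^3$. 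The only real obstacle is verifying that this stratification is exhaustive and matches the statement; the diagonal Ext computations, once the resolution is in hand, are routine.
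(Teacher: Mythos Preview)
Your proof is correct and reaches the same endpoint as the paper: both identify $\Ext^1(A,B)$ with the space of plane conics through $Z_6$ and read off the stratification from there. The mechanics differ slightly. The paper uses $\iota_P^!$-adjunction to get $\Ext^1(B,A)=\mathrm{H}^0(\II_{Z_6}(6))=\C^{22}$ in one line, and for $\Ext^1(A,B)$ applies Serre duality on $P$ to convert $\II_{Z_6}^\vee(-5)$ into $\II_{Z_6}(2)$, then uses the ordinary ideal-sheaf sequence $\II_{Z_6}(2)\to\OO_P(2)\to\OO_{Z_6}$ (together with $\chi(\II_{Z_6}(2))=0$) to obtain $h^1(\II_{Z_6}(2))=h^0(\II_{Z_6}(2))$. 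You instead set up the single triangle $\OO_{Z_6}[-2]\to\OO_P\to\II_{Z_6}^\vee$ and twist; this is a tidy uniform device, though for $\Ext^1(B,A)$ it is a longer path than the paper's direct adjunction. One small caveat: your justification $\RlHom(\OO_{Z_6},\OO_P)\cong\OO_{Z_6}[-2]$ via ``$Z_6$ is l.c.i.'' is not literally true for every length-$6$ subscheme of $P$, but the cohomological dimensions you need (namely that $\lExt^2(\OO_{Z_6},\OO_P)$ is a length-$6$ sheaf supported on $Z_6$) hold regardless, so the computation is unaffected.
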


\begin{proof}

 The first statement is obvious.  We notice that $\mathrm{Ext}^{1}(\iota_{P_{*}}\mathcal{I}_{Z_{6}}^{\vee}(-6),\iota_{P_{*}}\mathcal{I}_{Z_{6}}^{\vee}(-6))$ is a parameter space of six points in a plane which is of dimension 15 as claimed.

Now we compute $\mathrm{Ext}^{1}(B,A)$:
$$\mathrm{Ext}^{1}(\iota_{P_{*}}\mathcal{I}_{Z_{6}}^{\vee}(-6),\mathcal{O}(-1))=\mathrm{Hom}(\mathcal{I}_{Z'_{6}}^{\vee}(-6),\iota_{P}^{*}(\mathcal{O}))=\mathrm{H}^{0}(\mathcal{I}_{Z'_{6}}(6))= \mathbb{C}^{22}.$$

For the last part, using Serre duality we have
$$ \mathrm{Ext}^{1}(\mathcal{O}(-1),\iota_{P_{*}}\mathcal{I}_{Z_{6}}^{\vee}(-6))= \mathrm{Ext}^{1}(\iota_{P}^{*}\mathcal{O}(-1),\mathcal{I}_{Z'_{6}}^{\vee}(-6))=\mathrm{H}^{1}(\mathcal{O}_{P}\otimes \mathcal{I}_{Z'_{6}}(2))^{\vee}=\mathrm{H}^{1}(\mathcal{I}_{Z'_{6}}(2))^{\vee}
$$
\begin{equation*}
    =
    \begin{cases}
      \mathbb{C}^{3}, &    \text{6 points on a line}\\
     \mathbb{C}^{2}, &    \text{5 points on a line}\\
     \mathbb{C}, &     \text{6 points on a conic}\\
     0, &    \text{generic  points}\\
    \end{cases}
  \end{equation*}
as $\dim \mathrm{H}^{1}(\mathcal{I}_{Z'_{6}}(2))^{\vee}=\dim \mathrm{H}^{0}(\mathcal{I}_{Z'_{6}}(2))^{\vee}$ by taking a long exact sequence of $\II_{Z'_{6}}(2) \to \OO_P(2) \to \OO_{Z'_{6}}$.

\end{proof}

\section{Chambers} \label{chambers}

In this section, we describe the corresponding moduli spaces to the chambers close to the hyperbola from the right.  


First, we need the following Lemma which gives a condition under which we can realize what components can survive all the way to the large volume limit:

\begin{lem} \label{survival}
Suppose that $\HH^{0}$ of a general object in an irreducible component created by a wall is an ideal sheaf.  Then this irreducible component  survives all the way up to the moduli space of stable pairs.
\end{lem}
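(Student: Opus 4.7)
The plan is to invoke the characterization of PT stable pairs in Lemma~\ref{charPT} together with the converse direction of Lemma~\ref{BMS8.9}: verifying the three PT conditions for a generic $E$ in the component yields Bridgeland stability in the chamber corresponding to $s\gg 1$, i.e., in the PT chamber, and openness of stability then upgrades this to survival of the whole component.

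Let $E$ be a general object of the irreducible component $\mathcal C$, so by hypothesis $\HH^0(E)=\II_{C'}$ is the ideal sheaf of some $1$-dimensional subscheme $C'\subset\P^3$. Using the cohomology triangle $\HH^0(E)\to E\to\HH^1(E)[-1]$ and comparing with $\mathrm{ch}(E)=(1,0,-6,15)$, one immediately obtains $\mathrm{ch}_{\le 2}(\HH^1(E))=0$, so $\HH^1(E)$ is a coherent sheaf supported in dimension zero. Moreover, the Bridgeland semistability of $E$ at the wall creating $\mathcal C$ forces $C'$ to be Cohen--Macaulay: an embedded point of $C'$ would lift to a torsion subsheaf of $\HH^0(E)$, giving rise to a sub-triangle incompatible with the assumption that $\HH^0(E)$ is a bona fide ideal sheaf.

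The critical step is to verify $\Hom(\OO_p[-1],E)=0$ for every $p\in\P^3$. Since $\HH^0(E)$ is torsion-free, any non-zero map $\OO_p[-1]\to E$ factors through $\HH^1(E)[-1]$ and corresponds to a map $\OO_p\to \HH^1(E)$ of zero-dimensional sheaves. If this Hom were non-zero for \emph{every} $E\in\mathcal C$, then for generic $E$ the sheaf $\HH^1(E)$ would split off an $\OO_p$ summand, and correspondingly $E$ would decompose as $E_0\oplus\OO_p[-1]$; this contradicts the indecomposability of the generic member of $\mathcal C$, which by construction arises as a non-trivial extension $A\to E\to B$ of the two Jordan--H\"older factors at the creating wall (and whose class sits in a non-zero projectivization of $\Ext^1(B,A)$ strictly larger than the locus of split extensions). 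Hence the required Hom vanishing holds for general $E\in\mathcal C$.

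With all three conditions of Lemma~\ref{charPT} verified, the converse direction of Lemma~\ref{BMS8.9}, applied to the shifted object $E[1]\in\Coh^{\alpha,\beta}(\P^3)$, gives that $E$ is $\lambda_{\alpha,\beta,s}$-stable for all $s\gg 1$. Openness of stability then implies that the entire component $\mathcal C$ lands inside the moduli of PT stable pairs, and since its general point is stable and Chern character is preserved, it cuts out an irreducible component there. The main obstacle is the Hom-vanishing step: one must use the wall-creation structure of $\mathcal C$, rather than just the hypothesis on $\HH^0$, to exclude the degenerate situation in which every member of the component carries a common floating point that would otherwise destabilize it on the way to the large-volume limit.
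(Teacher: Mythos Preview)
Your approach differs substantially from the paper's, and it contains a genuine gap.

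The paper's argument is much more direct and exploits the specific situation: by Theorem~\ref{main}, every wall between the creating wall and the PT chamber has destabilising subobject $E'$ with $\HH^{0}(E')$ a torsion sheaf supported on a plane (of the form $\OO_{P}(-i)$, or more precisely $\HH^{0}$ of $\iota_{P*}\II^{\vee}_{Z_{l}}(-k)$). An injection $E'\hookrightarrow E$ in the heart induces an injection $\HH^{0}(E')\hookrightarrow\HH^{0}(E)$; but $\HH^{0}(E)$ is an ideal sheaf, hence torsion-free, so $\Hom(\OO_{P}(-i),\HH^{0}(E))=0$. No such $E'$ can destabilise $E$, and by openness of stability the generic object survives every subsequent wall. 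The explicit list of walls is what makes this work, and it sidesteps entirely the question of whether $E$ is literally a PT pair.

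Your attempt to verify the PT conditions directly breaks down at the $\Hom(\OO_{p}[-1],E)=0$ step. The inference ``$\Hom(\OO_{p},\HH^{1}(E))\neq 0$ for generic $E$ $\Rightarrow$ $\HH^{1}(E)$ splits off an $\OO_{p}$ summand $\Rightarrow$ $E\cong E_{0}\oplus\OO_{p}[-1]$'' is false in general. Take $\HH^{1}(E)=\OO_{2p}$ a length-two fat point with $p\in C'$: then $\Hom(\OO_{p},\OO_{2p})\neq 0$ but $\OO_{p}$ is not a direct summand of $\OO_{2p}$. One can choose the extension class in $\Ext^{2}(\OO_{2p},\II_{C'})$ to be nonzero yet vanish on the sub $\OO_{p}\hookrightarrow\OO_{2p}$; the resulting $E$ is indecomposable, arises as a nontrivial extension of its Jordan--H\"older factors, and still has $\Hom(\OO_{p}[-1],E)\neq 0$. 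So indecomposability at the creating wall does not give you the required vanishing. This is not hypothetical: the components you need to handle have $\HH^{1}$ of length up to $6$ (e.g.\ $\NN'_{6}$).

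A smaller issue: your Cohen--Macaulay argument is also off. An ideal sheaf $\II_{C'}$ is \emph{always} torsion-free, regardless of whether $C'$ has embedded points; an embedded point of $C'$ produces torsion in $\OO_{C'}$, not in $\II_{C'}$. So ``$\HH^{0}(E)$ is an ideal sheaf'' alone does not give Cohen--Macaulay, and your justification does not repair this. The paper's route avoids the issue because it never needs $C'$ to be Cohen--Macaulay---only torsion-freeness of $\II_{C'}$, which is automatic.
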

\begin{proof}
As stability is an open condition, we only need to show this for one object which is created after each wall. If an object $E$ is destabilized by a subobject $E'$, there is an injection $E' \hookrightarrow E$ in $\mathrm{Coh}^{ \alpha, \beta}(\mathbb{P}^{3})$ which induces an injection $\HH^0(E') \hookrightarrow \HH^0(E)$.  By assumption, $\HH^{0}$ of a general object created by the wall is an ideal sheaf $\II$.  
On the other hand, $\HH^0$ of the destabilizing subobjects are all of the form  $\OO_P(-i)$, by Theorem \ref{main}. But we have  $\Hom(\OO_P(-i), \II)=0$, therefore the induced map on $\HH^0$ is not injective, which is a contradiction. This implies the result.
\end{proof}
We also need the following Lemma:
\begin{lem} \label{lift}
Suppose that $E$ is a sheaf with $c_0(E)=1$ and $c_1(E)=0$, such that it fits into $\II_D(-1) \hookrightarrow E \twoheadrightarrow \OO_P(-k)$, for $\II_D$ an ideal sheaf of a subscheme $D$  transverse to the plane $P$, and $k$ a positive integer. Then $E$ is an ideal sheaf of a curve.
\end{lem}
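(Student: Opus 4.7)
The strategy is to identify $E$ with the ideal sheaf $\II_W$ of a one-dimensional subscheme $W \subset \P^3$ by exhibiting an embedding of $E$ into its double dual and then computing Chern characters. The three steps are: (i) show $E$ is torsion-free; (ii) use $c_1(E) = 0$ together with reflexivity on the smooth threefold $\P^3$ to identify $E^{\vee\vee} \cong \OO_{\P^3}$, so that $E \hookrightarrow \OO$ realizes $E$ as an ideal sheaf $\II_W$; and (iii) compute $\mathrm{ch}_2(E)$ to check that $W$ actually contains a curve.

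For step (i), I would let $T \subset E$ be the maximal torsion subsheaf. Since $\II_D(-1)$ is torsion-free, $T$ injects into $\OO_P(-k)$ and is therefore of the form $\II_{Z/P}(-k)$ for a closed subscheme $Z \subset P$, hence pure of dimension two with $c_1(T) = [P]$. The quotient $E/T$ is then torsion-free of rank one with $c_1 = -1$, so $(E/T)^{\vee\vee} \cong \OO(-1)$. Since $\Hom(\II_D(-1), \OO(-1)) = \C$, the composition $\II_D(-1) \hookrightarrow E/T \hookrightarrow \OO(-1)$ agrees up to scalar with the canonical inclusion, and passing to cokernels produces an embedding $\OO_Z \hookrightarrow \OO_D(-1)$. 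The transversality of $D$ and $P$ restricts $Z$ to lie in the zero-dimensional scheme $D \cap P$, while purity of $\OO_D(-1)$ forces any zero-dimensional subsheaf to vanish. Hence $Z = \emptyset$, which would mean $T \cong \OO_P(-k)$ and the defining extension splits. Excluding this split case (implicit in treating $E$ as a bona fide torsion-free sheaf in the wall-crossing setting, or by the stability hypothesis where the lemma is applied), one concludes $T = 0$.

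For steps (ii) and (iii), once $E$ is torsion-free of rank one, the natural map $E \hookrightarrow E^{\vee\vee}$ lands in a line bundle whose first Chern class equals $c_1(E) = 0$; hence $E^{\vee\vee} \cong \OO_{\P^3}$ and $E = \II_W$ for a subscheme $W \subset \P^3$ of codimension at least two. Additivity of Chern characters applied to the defining extension gives $\mathrm{ch}_2(E) = \mathrm{ch}_2(\II_D(-1)) + \mathrm{ch}_2(\OO_P(-k)) = -\deg(D) - k$, which is strictly negative under the natural reading that $D$ is a curve and $k \geq 1$. Since $\mathrm{ch}_2(\II_W)$ equals the negative of the degree of the one-dimensional part of $W$, this rules out $W$ being zero-dimensional and forces $W$ to contain a curve.

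The main obstacle lies in the torsion-freeness step: the argument crucially exploits both the transversality of $D$ with $P$ (to confine $Z$ to a finite scheme) and the purity of $\OO_D(-1)$ (to kill any finite subsheaf), and the residual split extension $\II_D(-1) \oplus \OO_P(-k)$ must be ruled out by the hypothesis that $E$ is an honest sheaf arising in the relevant moduli context rather than a formal direct sum.
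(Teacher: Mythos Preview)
Your proof is correct and takes a genuinely different route from the paper's. The paper argues torsion-freeness by a lifting obstruction: any nonzero torsion subsheaf of $E$ would contain some $\OO_P(-i)$ with $i>k$, and one checks that no such $\OO_P(-i)$ lifts to $E$ by identifying $\Ext^1(\OO_P(-j),\II_D(-1)) \cong \mathrm{H}^0(\II_{D\cap P/P}(j))$ via adjunction and transversality, so that the pullback along any nonzero $\OO_P(-i)\to\OO_P(-k)$ is multiplication by a nonzero form on $P$, hence injective, and a nonzero extension class stays nonzero. You instead pass to the reflexive hull $(E/T)^{\vee\vee}\cong\OO(-1)$ and obtain from $\II_D(-1)\subset E/T\subset \OO(-1)$ an embedding of the cokernel $\OO_Z(-k)$ into $\OO_D(-1)$, which purity of $\OO_D$ kills. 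Both arguments rest on the same transversality input (the paper needs $\iota_P^*\II_D=\II_{D\cap P/P}$; you need $D\cap P$ zero-dimensional and $\OO_D$ pure) and both tacitly exclude the split extension---a caveat you correctly flag and which the paper leaves implicit. The paper's route is slightly more direct and reuses the Ext machinery already developed in the surrounding lemmas; your route is more structural and has the bonus that the identification $E\cong\II_W$ via $E\hookrightarrow E^{\vee\vee}\cong\OO$ falls out of the same argument rather than being invoked as a separate standard fact.
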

\begin{proof}
First, we observe that 
$\iota_P^{*}(\II_D)=\mathcal{I}_{D\cap P/P}$. In order to show that $E$ is an ideal sheaf of a curve, we need to show that it is torsion-free. We know that any subsheaf of $\mathcal{O}_{P}(-k)$ contains a subsheaf of the form $\mathcal{O}_{P}(-i)$ for some $k < i$. It is enough to show that such $\mathcal{O}_{P}(-i)$ does not lift to a subsheaf of $E$. Using the identification $\mathrm{Ext}^1(\mathcal{O}_{P}(-i), \mathcal{I}_{D}(-1))=\mathrm{Ext}^1(\mathcal{O}_{P}(-i), \iota^{*}_{P}\mathcal{I}_{D}(-1)(1)[-1])=\mathrm{H}^0(\mathcal{I}_{D\cap P/P}(i))$, one can see that $ \mathrm{Ext}^{1}(\mathcal{O}_{P}(-k), \mathcal{I}_{D}(-1))  \hookrightarrow \mathrm{Ext}^{1}(\mathcal{O}_{P}(-i), \mathcal{I}_{D}(-1)) $ is injective. Hence $\mathcal{O}_{P}(-i)$ does not lift to a subsheaf of $E$.
\end{proof}

\begin{prop}  [{\cite[Proposition 3.14]{R1}}]
The moduli space for the first chamber below the first wall, which is given by all the extensions of two objects $\mathcal{O}(-2), \mathcal{O}_{ Q}(-3)$, is empty.
\end{prop}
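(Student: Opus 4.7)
The plan is to identify the only possible $\sigma$-stable objects with Chern character $v=(1,0,-6,15)$ in the chamber just inside the first wall as extensions of that wall's Jordan--H\"older factors in one particular direction, and then to verify that the relevant $\Ext^{1}$ vanishes.

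By Theorem~\ref{main}, the first wall $(\beta+4)^{2}+\alpha^{2}=4$ is realised by the pair $(\OO(-2),\OO_{Q}(-3))$ for some (possibly singular) quadric $Q\subset\P^{3}$, and every strictly semistable object of class $v$ on the wall has JH factors $\OO(-2)$ and $\OO_{Q}(-3)$. On the outer (large-volume) side of this wall, the general stable object is $\II_{C}$ for a $(2,3)$-complete intersection $C=Q\cap S$, given by the defining extension $0\to\OO(-2)\to\II_{C}\to\OO_{Q}(-3)\to 0$; this is the main component recovered in \cite{R1}, and on that side $\lambda(\OO(-2))<\lambda(\OO_{Q}(-3))$. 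Crossing inwards reverses this inequality, so now the extensions $\OO(-2)\to E\to\OO_{Q}(-3)$ are destabilised by the subobject $\OO(-2)$. The only possible new stable objects in the inner chamber are therefore extensions in the opposite direction $\OO_{Q}(-3)\to E\to\OO(-2)$, and they are parametrised by $\P\Ext^{1}(\OO(-2),\OO_{Q}(-3))$.

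It thus suffices to compute $\Ext^{1}(\OO(-2),\OO_{Q}(-3))\cong\mathrm{H}^{1}(\OO_{Q}(-1))$. Twisting the ideal sequence of $Q$ gives
$$0\to\OO_{\P^{3}}(-3)\to\OO_{\P^{3}}(-1)\to\OO_{Q}(-1)\to 0,$$
and since $\mathrm{H}^{i}(\OO_{\P^{3}}(k))=0$ for $i=1,2$ and every $k\in\Z$, the long exact sequence immediately yields $\mathrm{H}^{1}(\OO_{Q}(-1))=0$. Hence $\P\Ext^{1}(\OO(-2),\OO_{Q}(-3))=\emptyset$ and the moduli space in this chamber is empty. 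The only delicate point is identifying which of the two possible directions of extension survives as stable on each side of the wall; once this is pinned down by comparing with the known outer chamber, the Ext-vanishing is a routine consequence of line-bundle cohomology on $\P^{3}$.
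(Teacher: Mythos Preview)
Your proof is correct and follows the expected approach: identify the direction of extension that would survive in the inner chamber by comparing with the known description of $\mathcal{N}_1$ from \cite{R1}, and then verify $\Ext^{1}(\OO(-2),\OO_{Q}(-3))\cong\mathrm{H}^{1}(\OO_{Q}(-1))=0$ via the ideal sequence of $Q$. Since the paper cites \cite{R1} rather than reproducing a proof, this is essentially the argument one expects there; the one implicit step worth making explicit is that no object can be \emph{stable} on the wall itself (hence every inner-chamber object really is a strict extension), which follows because every object of $\mathcal{N}_1$ already admits $\OO(-2)$ as a subobject of equal slope.
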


Let $\mathcal{N}_{1}$ be the first non-empty moduli space for the next chamber, which appears after crossing the smallest wall $\langle \mathcal{O}(-2), \mathcal{O}_{ Q}(-3)\rangle$. In the following Proposition from \cite{R1}, we see that this moduli space gives a compactification of objects corresponding to $(2,3)$-complete intersection curves in $\mathbb{P}^{3}$:

\begin{prop}  [{\cite[Proposition 3.15]{R1}}] \label{N1}
The moduli space $\mathcal{N}_{1}$ is a $\mathbb{P}^{15}$-bundle over  $\mathbb{P}^{9}$. More precisely, the complement of (2,3)-complete intersections in $\mathcal{N}_{1}$ are parametrized by the pairs $(Q, C)$ where $Q=P \cup P'$ is a union of two planes and $C$ is a conic in one of the two planes. The associated objects are non-torsion free sheaves $E$  given by 
$\mathcal{O}_{P}(-4) \hookrightarrow E \twoheadrightarrow \mathcal{I}_{C_{2}}(-1),$ for a conic $C_{2}$. The generic element is given by an ideal sheaf of (2,3)-complete intersection curves.
\end{prop}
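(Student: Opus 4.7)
The strategy is to describe $\mathcal{N}_1$ explicitly by wall-crossing and then to build the associated sheaves via a Koszul-type complex. By Theorem \ref{main} and the wall-crossing framework, the newly stable objects in $\mathcal{N}_1$ are precisely the nontrivial extensions
\[
0 \to \mathcal{O}(-2) \to E \to \mathcal{O}_Q(-3) \to 0,
\]
with $Q$ forced by Lemma \ref{(0, >)} to be a (possibly singular) quadric. Thus $\mathcal{N}_1$ is set-theoretically $\{(Q,[\xi])\}$ with $Q\in |\mathcal{O}(2)|=\mathbb{P}^9$ and $[\xi]\in \mathbb{P}(\mathrm{Ext}^1(\mathcal{O}_Q(-3),\mathcal{O}(-2)))$.

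Applying $\mathrm{Hom}(-,\mathcal{O}(-2))$ to the resolution $0 \to \mathcal{O}(-5)\xrightarrow{f_Q}\mathcal{O}(-3)\to\mathcal{O}_Q(-3)\to 0$ gives
\[
\mathrm{Ext}^1(\mathcal{O}_Q(-3),\mathcal{O}(-2)) \;\cong\; \mathrm{H}^0(\mathcal{O}(3))\big/ f_Q\cdot \mathrm{H}^0(\mathcal{O}(1)) \;\cong\; \mathbb{C}^{16}.
\]
As this dimension is constant in $Q$, these extension groups form a rank-$16$ vector bundle over $\mathbb{P}^9$, whose projectivization is the claimed $\mathbb{P}^{15}$-bundle. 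A class $\xi=[f_3]$ then corresponds to the sheaf
\[
E \;=\; \mathrm{coker}\Bigl(\mathcal{O}(-5)\xrightarrow{(f_Q,\,f_3)}\mathcal{O}(-2)\oplus \mathcal{O}(-3)\Bigr),
\]
which is the Koszul realization of the extension. Whenever $(f_Q,f_3)$ is a regular sequence, this is the Koszul resolution of the ideal sheaf $\mathcal{I}_C$ of the $(2,3)$-complete intersection $C=V(f_Q,f_3)$. For $Q$ integral, $f_Q$ is irreducible so any $\xi\neq 0$ yields a regular sequence; the same holds for reducible $Q$ whenever $f_3$ is not divisible by a linear factor of $f_Q$. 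Hence $(2,3)$-complete intersections fill a dense open locus in $\mathcal{N}_1$.

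The main content, and the essential obstacle, is the description of the complement. Write $Q=P\cup P'$ with $f_Q=f_P f_{P'}$ and suppose $f_3 = f_P\cdot g$ for some quadric $g$; the classes of such $f_3$ modulo $f_Q\cdot \mathrm{H}^0(\mathcal{O}(1))$ form a $\mathbb{C}^6$, giving a $\mathbb{P}^5$ in the fiber. Since $(f_Q,f_3) = (f_Pf_{P'},f_P g)$ factors as
\[
\mathcal{O}(-5)\xrightarrow{f_P}\mathcal{O}(-4)\xrightarrow{(f_{P'},\,g)}\mathcal{O}(-2)\oplus \mathcal{O}(-3),
\]
and since for generic $g$ the map $(f_{P'},g)$ is the Koszul map (twisted by $-1$) with cokernel $\mathcal{I}_{C_2}(-1)$, where $C_2=V(f_{P'},g)$ is a conic in $P'$, one obtains directly the exact sequence
\[
0\to \mathcal{O}_P(-4)\to E\to \mathcal{I}_{C_2}(-1)\to 0.
\]
The symmetric case $f_{P'}\mid f_3$ gives a second $\mathbb{P}^5$ in the fiber, disjoint from the first since $f_P\cdot \mathrm{H}^0(\mathcal{O}(2)) \cap f_{P'}\cdot \mathrm{H}^0(\mathcal{O}(2)) = f_Q\cdot \mathrm{H}^0(\mathcal{O}(1))$. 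Assembling these two contributions over the $6$-dimensional locus of reducible quadrics yields an $11$-dimensional bad locus, matching the dimension of the parametrizing space $\{(Q,C):Q=P\cup P',\ C\text{ a conic in one of the two planes}\}$ and completing the identification of the complement.
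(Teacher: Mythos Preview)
Your argument is correct and gives a self-contained, explicit proof. The paper takes a different route: it cites \cite[Proposition~3.15]{R1} for the $\mathbb{P}^{15}$-bundle structure and for the description of the complement as non-torsion-free sheaves, and the only thing proved here is the final sentence (the generic element is an ideal sheaf). For that, the paper simply notes that $\HH^1(\mathcal{O}(-2))=0$, so the long exact sequence of $\mathcal{O}(-2)\hookrightarrow E\twoheadrightarrow \mathcal{O}_Q(-3)$ shows $E$ is a sheaf, and then invokes Lemma~\ref{lift} to conclude it is an ideal sheaf. Your Koszul realisation $E=\mathrm{coker}\bigl(\mathcal{O}(-5)\to\mathcal{O}(-2)\oplus\mathcal{O}(-3)\bigr)$ bypasses both the citation and the lemma, and in addition recovers the complement description directly by factoring the map when $f_Q$ is reducible. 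What the paper's route buys is uniformity: Lemma~\ref{lift} is stated once and reused verbatim in Propositions~\ref{N2}--\ref{N6}, whereas your Koszul picture is specific to this wall.

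Two cosmetic points. First, the map you write as $(f_Q,f_3)$ into $\mathcal{O}(-2)\oplus\mathcal{O}(-3)$ should be $(f_3,f_Q)$, since $f_Q$ has degree $2$ and lands in $\mathcal{O}(-3)$. Second, your qualifier ``for generic $g$'' in the factored case is unnecessarily weak: a nonzero class $[f_3]$ with $f_P\mid f_3$ forces $f_{P'}\nmid g$ (else $f_Q\mid f_3$ and the class vanishes), so $(f_{P'},g)$ is automatically a regular sequence for \emph{every} point of that $\mathbb{P}^5$. This also handles the double-plane case $P=P'$, which you do not mention explicitly but which fits into the same framework with $C_2\subset P$.
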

\begin{proof}
The only remaining part which was not covered in \cite{R1}, is the description of the generic element as an ideal sheaf: this is easily obtained by noticing that $\HH^1(\OO(-2))=0$, and then taking the long exact sequence of the defining sequence $ \mathcal{O}(-2) \into E \onto \mathcal{O}_{ Q}(-3)$ and applying Lemma \ref{lift} to $D=P'$.
\end{proof}
Before describing the next chambers, we need the following Lemmas:

\begin{lem} [{\cite[Lemma 4.4]{GHS}}] \label{3.13}
Let $F \hookrightarrow E \twoheadrightarrow G$ be an exact sequence at a wall in Bridgeland stability with $E$ semistable to one side of the wall and $F, G$ distinct stable objects of the same (Bridgeland) slope. Then we have:
$$\mathrm{ext}^{1}(E,E) \leq \mathrm{ext}^{1}(F,F)+\mathrm{ext}^{1}(G,G)+ \mathrm{ext}^{1}(F,G)+ \mathrm{ext}^{1}(G,F)-1.$$
\end{lem}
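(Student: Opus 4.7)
The plan is to apply the functors $\Hom(-, F)$, $\Hom(-, G)$, and $\Hom(E, -)$ in turn to the defining triangle $F \hookrightarrow E \twoheadrightarrow G$, extract an $\Ext^{1}$-bound from each resulting long exact sequence, and then chain them. The key inputs are the stability hypotheses and the non-triviality of the extension class.

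Since $F$ and $G$ are non-isomorphic stable objects of the same Bridgeland phase, $\hom(F,G) = \hom(G,F) = 0$ and $\hom(F,F) = \hom(G,G) = 1$. Moreover, because $E$ is semistable on one side of the wall, it cannot be isomorphic to the direct sum $F \oplus G$, so the class $\epsilon \in \Ext^{1}(G,F)$ defining the triangle is non-zero. This non-vanishing is precisely what will supply the $-1$ in the final estimate.

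Applying $\Hom(-, F)$, the connecting map $\Hom(F,F) \to \Ext^{1}(G,F)$ sends $\id_F \mapsto \epsilon$ and is therefore injective, so a one-dimensional subspace of $\Ext^{1}(G,F)$ is consumed when passing to $\Ext^{1}(E,F)$, giving
$$\ext^{1}(E,F) \leq \ext^{1}(F,F) + \ext^{1}(G,F) - 1.$$
Applying $\Hom(-, G)$ and using $\hom(F,G) = 0$ to make the relevant connecting map vanish gives
$$\ext^{1}(E,G) \leq \ext^{1}(G,G) + \ext^{1}(F,G).$$
Finally, applying $\Hom(E,-)$ produces an exact piece $\Ext^{1}(E,F) \to \Ext^{1}(E,E) \to \Ext^{1}(E,G)$; because the surjection $p\colon E \to G$ already realises a one-dimensional $\Hom(E,G)$ as the image of $\id_E$, the preceding connecting map $\Hom(E,G) \to \Ext^{1}(E,F)$ is zero, and one obtains
$$\ext^{1}(E,E) \leq \ext^{1}(E,F) + \ext^{1}(E,G).$$
Chaining the three displayed bounds yields the claimed inequality.

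There is no serious obstacle here. The only non-mechanical step is using $\epsilon \ne 0$ to gain the $-1$; everything else is a routine dimension count in long exact sequences of $\Ext$ functors applied to the defining triangle, and no Serre duality or other external input is needed.
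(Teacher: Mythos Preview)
Your argument is correct. The paper does not supply its own proof of this lemma; it is quoted directly from \cite[Lemma 4.4]{GHS} and left without argument, so there is nothing in the present paper to compare against. Your chain of three long-exact-sequence estimates is exactly the standard proof (and is the argument given in the cited reference). One minor remark: in your third step, the vanishing of the connecting map $\Hom(E,G) \to \Ext^{1}(E,F)$ is true but not needed for the bound $\ext^{1}(E,E) \leq \ext^{1}(E,F) + \ext^{1}(E,G)$, which follows from bare exactness of $\Ext^{1}(E,F) \to \Ext^{1}(E,E) \to \Ext^{1}(E,G)$.
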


\begin{lem}[{\cite{Moi}, \cite[Theorem 4.7]{GHS}}] \label{3.15}
Any birational morphism $f\colon X \rightarrow Y$ between smooth proper algebraic spaces of finite type over complex numbers s.t. the contracted locus $E$ is irreducible, and $f(E)$ is smooth, is the blow up of $Y$ in $f(E)$. 
\end{lem}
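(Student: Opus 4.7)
The plan is to use the universal property of blow-ups together with an identification of exceptional loci. Write $Z := f(E) \subset Y$, denote by $\pi \colon \widetilde{Y} := \mathrm{Bl}_Z Y \to Y$ the blow-up of $Y$ along $Z$, and let $D \subset \widetilde{Y}$ denote its exceptional divisor. Since $Z$ is smooth in $Y$ and both are smooth algebraic spaces, $\widetilde{Y}$ is smooth, and $D \to Z$ is the projective bundle $\mathbb{P}(\mathcal{N}_{Z/Y})$ of relative dimension $c - 1$, where $c = \mathrm{codim}(Z,Y)$.

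First I would show that $E$ is a Cartier divisor on $X$. Because $f$ is a birational morphism between smooth algebraic spaces, the purity theorem for the exceptional locus forces every irreducible component of the exceptional locus to have codimension one in $X$; since $E$ is assumed irreducible, $E$ is a prime divisor, and smoothness of $X$ makes it automatically Cartier. Next I would verify that $f^{-1}\mathcal{I}_Z \cdot \mathcal{O}_X$ is an invertible sheaf: away from $E$, the map $f$ is an isomorphism onto $Y \setminus Z$, so the pulled-back ideal is trivial; near $E$, smoothness of $Z$ in $Y$ lets one choose local generators of $\mathcal{I}_Z$ and check on local charts that the pullback ideal is generated by a single equation for $E$. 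The universal property of the blow-up then produces a unique morphism $g \colon X \to \widetilde{Y}$ lifting $f$, which is again proper and birational with $g(E) \subseteq D$.

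Finally I would prove that $g$ is an isomorphism. Since $g$ is birational, proper, and an isomorphism off $E$, by Zariski's main theorem (in its algebraic-space version) it suffices to show $g$ is quasi-finite. For this I would compare fibers of $f$ and of $\pi$ over a point $z \in Z$: fiber dimensions of $f|_E$ are constrained by the fact that $E$ is an irreducible divisor dominating $f(E)=Z$, while the fiber $\pi^{-1}(z)$ is a $\mathbb{P}^{c-1}$, and the induced map on fibers is a morphism of varieties of the same dimension, so must be finite once one verifies it is not constant. Combining this with properness and birationality forces $g$ to be an isomorphism.

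The main obstacle is the last step: ruling out the possibility that $g$ contracts some positive-dimensional subvariety inside $E$, i.e.\ verifying quasi-finiteness. This is where the hypothesis that $f(E)$ is smooth does the real work, as it allows one to identify $D$ with an explicit projective bundle and match dimensions of fibers of $E \to Z$ and $D \to Z$; without that smoothness one would have to contend with a genuinely singular target and a much less structured exceptional divisor.
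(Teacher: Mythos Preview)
The paper does not prove this lemma itself; it is quoted from the cited references. So there is no in-paper argument to compare against.

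Your outline has a genuine gap, and it lies earlier than the step you flagged. In step~2 you claim that $f^{-1}\mathcal{I}_Z\cdot\OO_X$ is invertible because locally ``the pullback ideal is generated by a single equation for $E$,'' but this is essentially the content of the theorem, not a routine check. Smoothness of $Z$ gives local generators $y_1,\dots,y_c$ of $\mathcal{I}_Z$; each $f^*y_i$ vanishes to some order along $E$, but after stripping off the maximal power of a local equation for $E$ the resulting functions can still share a common zero on $E$, so the scheme $f^{-1}(Z)$ can have embedded components. Knowing that the \emph{set-theoretic} zero locus equals the prime divisor $E$ does not force the ideal to be principal on a smooth variety.

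Step~3 is also not correct as written: the assertion ``a morphism of varieties of the same dimension must be finite once one verifies it is not constant'' is false (any nontrivial blow-up is already a counterexample), and you have not shown that every fibre of $E\to Z$ has dimension exactly $c-1$---upper semicontinuity only gives $\ge c-1$, and a jumping fibre would be contracted by $g$. The proofs in the literature do not start from the universal property of the blow-up. Instead, using vanishing theorems, duality, and cohomology-and-base-change (this is where smoothness of both $X$ and $Y$ and working over $\mathbb{C}$ are genuinely used), one shows directly that each fibre $f^{-1}(z)$ is isomorphic to $\mathbb{P}^{c-1}$ and that $E\to Z$ is the projectivised normal bundle; the identification with $\mathrm{Bl}_{f(E)}Y$ is then a consequence rather than the mechanism.
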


Now, we want to describe the moduli space $\mathcal{N}_{2}$ for the next chamber,  which comes after the wall $\langle\mathcal{I}_{C_{2}}(-1), \mathcal{O}_{P}(-4)\rangle$. Since $\mathrm{Ext}^{1}(\mathcal{I}_{C_{2}}(-1),\mathcal{O}_{P}(-4))=\mathbb{C}$ for all $\mathcal{I}_{C_{2}} \in {\mathcal{H}ilb^{2t+1}(\mathbb{P}^{3})}$ and all hyperplanes $P \in (\mathbb{P}^{3})^{\vee}$, for each choice of $C_2$ and $P$ there is a unique object in $\mathcal{N}_{1}$ destabilized at the wall $\langle\mathcal{I}_{C_{2}}(-1), \mathcal{O}_{P}(-4)\rangle$, identifying the destabilized locus with ${\mathcal{H}ilb^{2t+1}(\mathbb{P}^{3})} \times (\mathbb{P}^{3})^{\vee}$. 
\begin{prop}[{\cite[Proposition 3.18]{R1}}]  \label{N2}
The moduli space $\mathcal{N}_{2}$ for the next chamber is a blow up of $\mathcal{N}_{1}$ in the locus ${\mathcal{H}ilb^{2t+1}(\mathbb{P}^{3})} \times (\mathbb{P}^{3})^{\vee}$. The generic element in the exceptional locus is given by an ideal sheaf $\II_{C_2 \cup C_4}$, for $C_4$ a plane quartic, and $C_2$ a conic not in the plane.
\end{prop}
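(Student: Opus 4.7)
The plan is to split the proof into establishing the blow-up structure and identifying the generic element of the exceptional divisor. The first follows by applying the criterion of Lemma \ref{3.15}, while the second uses Lemma \ref{lift}.

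For the blow-up structure, I would first identify the contracted locus in $\mathcal{N}_1$ along the wall $\langle \mathcal{I}_{C_2}(-1), \mathcal{O}_P(-4)\rangle$. By Lemma \ref{ext1green}, $\Ext^1(\mathcal{I}_{C_2}(-1), \mathcal{O}_P(-4)) = \mathbb{C}$, so each pair $(\mathcal{I}_{C_2}, P) \in \mathcal{H}ilb^{2t+1}(\mathbb{P}^3) \times (\mathbb{P}^3)^\vee$ determines a unique strictly semistable object in $\mathcal{N}_1$; this identifies the contracted locus with $\mathcal{H}ilb^{2t+1}(\mathbb{P}^3) \times (\mathbb{P}^3)^\vee$, of dimension $8+3=11$. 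Crossing into $\mathcal{N}_2$, the newly stable objects form a $\mathbb{P}(\Ext^1(\mathcal{O}_P(-4), \mathcal{I}_{C_2}(-1))) = \mathbb{P}^{12}$-bundle (again by Lemma \ref{ext1green}) over this locus, of total dimension $23 = \dim \mathcal{N}_2 - 1$, the correct dimension for an exceptional divisor. Next I would check smoothness: Lemma \ref{3.13} bounds $\ext^1(E,E) \leq 8 + 3 + 1 + 13 - 1 = 24$, matching $\dim \mathcal{N}_2$, so $\mathcal{N}_2$ is smooth along the exceptional locus; combining this with the smoothness of $\mathcal{N}_1$ (which follows from Proposition \ref{N1}), the hypotheses of Lemma \ref{3.15} are satisfied and $\mathcal{N}_2 \to \mathcal{N}_1$ is the blow-up.

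For the generic element, I would apply Lemma \ref{lift} with $D = C_2$ (for generic $(\mathcal{I}_{C_2}, P)$, $C_2$ is a smooth conic transverse to $P$) and $k = 4$; its hypotheses are satisfied, so $E = \mathcal{I}_X$ is the ideal sheaf of a curve $X$. From $\mathrm{ch}(E) = (1,0,-6,15)$, $X$ has degree $6$; the inclusion $\mathcal{I}_{C_2}(-1) \hookrightarrow E$ forces $C_2 \subset X$; and the torsion quotient $\mathcal{O}_P(-4)$ identifies the residual component as a plane quartic $C_4 \subset P$. Matching Hilbert polynomials, $p_{C_2 \cup C_4}(t) = 6t - 1 - |C_2 \cap C_4|$ against the target $6t - 3$ gives $|C_2 \cap C_4| = 2$; since $C_2 \cap P$ is generically two points, this forces $C_4 \supset C_2 \cap P$. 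A dimension count $8 + 3 + (14 - 2) = 23$ for such configurations confirms that $\mathcal{I}_{C_2 \cup C_4}$ fills the exceptional divisor.

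The main subtlety is controlling torsion in $E$: an extension of the torsion sheaf $\mathcal{O}_P(-4)$ by the ideal sheaf $\mathcal{I}_{C_2}(-1)$ need not be torsion-free, and the lift of a torsion subsheaf of $\mathcal{O}_P(-4)$ to $E$ is precisely what Lemma \ref{lift} rules out, via the injectivity of the relevant $\Ext^1$ map, which requires $C_2$ to be transverse to $P$ (the generic condition).
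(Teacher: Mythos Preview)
Your proposal is correct and matches the paper's approach: the paper defers the blow-up structure entirely to \cite[Proposition 3.18]{R1} (where the argument is indeed via Lemmas \ref{3.13} and \ref{3.15} as you sketch), and for the generic element it likewise applies Lemma \ref{lift} with $D=C_2$ after observing that $E$ is a sheaf. Your extra verification that $|C_2\cap C_4|=2$ via Hilbert polynomials and the dimension count for such configurations is a nice elaboration the paper omits.
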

\begin{proof}
Again the only remaining part which was not needed in \cite{R1}, is the description of the generic element as an ideal sheaf: this is easily obtained by noticing that $\HH^1(\II_{C_2}(-1))=0$, and then taking the long exact sequence of the defining sequence $\mathcal{I}_{C_{2}}(-1) \into E \onto \mathcal{O}_{P}(-4)$ and applying Lemma \ref{lift} to $D=C_2$ in the general case.
\end{proof}

Let $\mathfrak{Fl}_2$ is the space  parametrizing flags $Z_{2} \subset P \subset \mathbb{P}^{3}$ where $P$ is a plane and $Z_{2}$ a zero-dimensional subscheme of length $2$. The next wall crossing was considered in {\cite[]{R1}}:

\begin{thm}[{\cite[Corollary 4.4]{R1}}] \label{N3}The moduli space $\mathcal{N}_{3}$ for the next chamber consists of two irreducible components: one is  $\widetilde {\mathcal{N}_{2}}$ which is birational to  $\mathcal{N}_{2}$; the other is a new component,  $\mathcal{N}'_{3}$ which is  a $\mathbb{P}^{17}$-bundle over $\mathbb{G}r(2,4) \times \mathfrak{Fl}_2 $. 
The latter generically parametrizes the union of a line and a plane quintic together with a choice of two points 
on the quintic. For any general element $E$ in the new component, $\HH^0(E)$ is 
an ideal sheaf, and $\HH^1(E)\neq 0$.

\end{thm}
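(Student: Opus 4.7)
The plan is to analyze the Bridgeland wall induced by the tilt wall $(\beta+5.5)^{2}+\alpha^{2}=18.25$ on the right side of $\H$, using the Ext computations of Lemmas \ref{ext1purple1}--\ref{ext1purple3}. By Theorem \ref{main} there are three destabilizing Jordan--Hölder decompositions along this wall; I would single out the decomposition $A = \mathcal{I}_{L}(-1)$, $B = \iota_{P_{*}}\mathcal{I}_{Z_{2}}^{\vee}(-5)$ as the generic one that creates a genuinely new component $\mathcal{N}'_{3}$, and argue that the other two decompositions produce only degeneration strata lying either in the closure of $\mathcal{N}'_{3}$ or inside a birational modification of $\mathcal{N}_{2}$.

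For $\mathcal{N}'_{3}$, on the open locus where $\langle Z_{2}\rangle \cap L = \emptyset$, Lemma \ref{ext1purple1} gives $\mathrm{ext}^{1}(B,A) = 18$ and $\mathrm{ext}^{1}(A,B) = 0$. Hence on the large-volume side of the wall we obtain new stable objects $E$ fitting in extensions $A \to E \to B$, parametrized by $\mathbb{P}^{17}$ over the irreducible base $\mathbb{G}r(2,4) \times \mathfrak{Fl}_{2}$. Constancy of $\mathrm{ext}^{1}$ over this stratum promotes the family to a genuine $\mathbb{P}^{17}$-bundle of total dimension $28$; the upper bound of Lemma \ref{3.13}, applied with $\mathrm{ext}^{1}(A,A) = 4$, $\mathrm{ext}^{1}(B,B) = 7$, $\mathrm{ext}^{1}(B,A) = 18$, $\mathrm{ext}^{1}(A,B) = 0$ from Lemma \ref{ext1purple1}, yields $\mathrm{ext}^{1}(E,E) \leq 4 + 7 + 18 + 0 - 1 = 28$, matching the total-space dimension and forcing smoothness and irreducibility on the open stratum. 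Reading the data $(L, P, Z_{2})$ off the defining triangle gives the advertised geometric description.

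To establish the cohomological properties of a general $E \in \mathcal{N}'_{3}$, note that $B = \iota_{P_{*}}\mathcal{I}_{Z_{2}}^{\vee}(-5)$ has cohomology sheaves $\HH^{0}(B) \cong \iota_{P_{*}}\mathcal{O}_{P}(-5)$ and $\HH^{1}(B)$ a length-two torsion sheaf supported at $Z_{2}$. The sheaf-cohomology long exact sequence of the triangle $A \to E \to B$ then yields a short exact sequence $\mathcal{I}_{L}(-1) \into \HH^{0}(E) \onto \iota_{P_{*}}\mathcal{O}_{P}(-5)$ together with $\HH^{1}(E) \cong \HH^{1}(B) \neq 0$. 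Applying Lemma \ref{lift} to the first sequence, with $D = L$ and $k = 5$ and using that for generic $(L,P)$ the line $L$ is transverse to $P$, identifies $\HH^{0}(E)$ with the ideal sheaf of the reducible curve $L \cup C_{5}$ where $C_{5} \subset P$ is a plane quintic, proving both cohomological claims. For the remaining two Jordan--Hölder types, Lemmas \ref{ext1purple2} and \ref{ext1purple3} show that the corresponding extension loci have dimension at most $28$ and appear in specializations where $Z_{2}$ collides with $L$ (with a PT-section summand of the form $\mathcal{O}(-1) \to \mathcal{O}_{L}$ or $\mathcal{O}(-1) \to \mathcal{O}_{L}(1)$ splitting off); these sit inside the closure of $\mathcal{N}'_{3}$. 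The $\mathcal{N}_{2}$-objects destabilized along this wall form the proper closed locus where $\mathrm{ext}^{1}(A,B) \neq 0$ in Lemma \ref{ext1purple1}, so $\widetilde{\mathcal{N}_{2}}$ is obtained from $\mathcal{N}_{2}$ by a birational modification (of blow-up or flip type, through Lemma \ref{3.15}) supported there, and is therefore birational to $\mathcal{N}_{2}$ as claimed.

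The main obstacle I anticipate is global irreducibility of $\mathcal{N}'_{3}$: extending the $\mathbb{P}^{17}$-bundle structure from the open stratum to the whole base $\mathbb{G}r(2,4) \times \mathfrak{Fl}_{2}$ requires showing that the Ext-projectivization does not break into several irreducible components over the jumping locus where $\mathrm{ext}^{1}$ exceeds $18$ (Lemma \ref{ext1purple1} records jumps of size $1$ and $2$), and verifying that the auxiliary Jordan--Hölder decompositions really contribute only to already-identified components rather than to a sporadic third one. This is the projectivization-locus analysis for which the author credits Jiang in the acknowledgements, and it is driven by the precise $\mathrm{ext}^{1}$ jump-pattern in Lemma \ref{ext1purple1} together with the dimension counts of Lemmas \ref{ext1purple2}--\ref{ext1purple3}.
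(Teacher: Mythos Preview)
Your argument for the cohomological claims about a general $E\in\mathcal{N}'_{3}$ is essentially the paper's: take the long exact sequence of $\mathcal{I}_{L}(-1)\to E\to \iota_{P_{*}}\mathcal{I}_{Z_{2}}^{\vee}(-5)$, read off $\HH^{1}(E)\cong\HH^{1}(B)\neq 0$, and apply Lemma~\ref{lift} with $D=L$, $k=5$ to identify $\HH^{0}(E)$ as an ideal sheaf. That is in fact all the paper proves here; the two-component statement, the $\mathbb{P}^{17}$-bundle structure, and the birational description of $\widetilde{\mathcal{N}_{2}}$ are imported wholesale from \cite{R1} (the theorem is labelled as \cite[Corollary 4.4]{R1}).

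There is, however, a structural misconception in your proposal. The three pairs listed in Theorem~\ref{main} for the tilt semicircle $(\beta+5.5)^{2}+\alpha^{2}=18.25$ are \emph{not} three Jordan--H\"older types occurring on a single Bridgeland wall: they define three \emph{distinct} Bridgeland walls on the right of $\H$, crossed sequentially to produce $\mathcal{N}_{3}$, $\mathcal{N}_{4}$, $\mathcal{N}_{5}$ (see Propositions~\ref{N4} and~\ref{N5}). In particular Lemmas~\ref{ext1purple2} and~\ref{ext1purple3} are irrelevant to $\mathcal{N}_{3}$; they govern the later wall-crossings and generate the genuinely new components $\mathcal{N}'_{4},\mathcal{N}''_{4}$ and $\mathcal{N}'_{5},\mathcal{N}''_{5},\mathcal{N}'''_{5}$, rather than strata in the closure of $\mathcal{N}'_{3}$. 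So your proposed step ``argue that the other two decompositions produce only degeneration strata'' is both unnecessary for Theorem~\ref{N3} and false as a claim about the later moduli spaces. Once this is removed, what remains of your argument coincides with the paper's (short) proof together with the citation to \cite{R1}.
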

\begin{proof}
 Again the only remaining part which was not needed in \cite{R1}, is the description of the generic element as an ideal sheaf: this is easily obtained by noticing that $\HH^1(\II_L(-1))=0$, and then taking the long exact sequence of the defining sequence $\II_L(-1) \into E \onto \iota_{P_{*}}(\mathcal{I}_{Z_{2}})^{\vee}(-5)$ of any class $E$ in $\Ext^1(\iota_{P_{*}}(\mathcal{I}_{Z_{2}})^{\vee}(-5),\II_L(-1))$, and applying Lemma \ref{lift} to $D=L$ in the general case. The claim $\HH^1$ being non-zero is obtained in a similar way and noticing that $\HH^1(\iota_{P_{*}}(\mathcal{I}_{Z_{2}})^{\vee}(-5))\neq 0$. 
\end{proof}

Let $\mathcal{N}_{4}$ be the moduli space for the next chamber. Let $\mathcal{U}$ be the universal line over $\mathbb{G}r(2,4)$, and $\mathfrak{Fl}_l$ is the space  parametrizing flags $Z_{l} \subset P \subset \mathbb{P}^{3}$ where $P$ is a plane and $Z_{l}$ a zero-dimensional subscheme of length $l$.

\begin{prop}  \label{N4}
 The moduli space  $\mathcal{N}_{4}$ has four irreducible components: $\widetilde{\widetilde{\mathcal{N}_{2}}}$, $\widetilde{\mathcal{N}'_{3}}$, $\mathcal{N}'_{4}$ and $\mathcal{N}''_{4}$.  
 The first two are birational to their counterparts  in  $\mathcal{N}_{3}$. The component $\mathcal{N}'_{4}$ is 
 a $\mathbb{P}^{18}$-bundle over $\mathcal{U} \times \mathfrak{Fl}_1 $, and it generically parametrizes the union of  a line in $\P^3$ together with a choice of a point on it,  and  a plane quintic intersecting the line, together with a choice of a point  on it. The component $\mathcal{N}''_{4}$ is 
 a  $\mathbb{P}^{19}$-bundle over $\mathbb{G}r(2,4) \times \mathfrak{Fl}_1 $, and it  generically parametrizes disjoint unions of a  line in $\P^3$ and a plane quintic together with a choice of a point  on it. For both new components, $\HH^0$ of the generic element is given by an ideal sheaf,  and $\HH^1\neq 0$.


\end{prop}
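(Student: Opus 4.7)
The proof proceeds in direct analogy with Theorem \ref{N3}. By Theorem \ref{main}, on the semicircle $(\beta+5.5)^{2}+\alpha^{2}=18.25$ lying on the right of $\H$ there are three walls; the one with quotient $\iota_{P*}\mathcal{I}_{Z_2}^\vee(-5)$ was crossed to produce $\mathcal{N}_3$. The two remaining walls are
\[
W_1 := \langle\mathcal{O}(-1)\to\mathcal{O}_L,\; \iota_{P*}\mathcal{I}_{Z_1}^\vee(-5)\rangle, \qquad W_2 := \langle\mathcal{O}(-1)\to\mathcal{O}_L(1),\; \mathcal{O}_P(-5)\rangle,
\]
and I would cross them one at a time.

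For each wall $\langle A,B\rangle$, the newly stable objects are parametrised by a locus fibred over the moduli of pairs $(A,B)$, with fibres $\mathbb{P}(\Ext^1(B,A))$. Using Lemma \ref{ext1purple2} for $W_1$ and Lemma \ref{ext1purple3} for $W_2$, I would stratify this parameter space by the dimension of $\Ext^1(B,A)$ and isolate the top (open) stratum of each. For $W_1$, the top stratum is where the zero locus of the section defining $A$ lies off $P$, so $\Ext^1(B,A)=\mathbb{C}^{19}$; this gives $\mathcal{N}'_4$ as a $\mathbb{P}^{18}$-bundle over $\mathcal{U}\times\mathfrak{Fl}_1$, with $\mathcal{U}$ parametrising $A$ (a line together with a chosen point) and $\mathfrak{Fl}_1$ parametrising $B$ (a plane with a chosen point). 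For $W_2$, the top stratum is $L\not\subset P$ with zero locus of $s$ avoiding $L\cap P$, yielding $\Ext^1(B,A)=\mathbb{C}^{20}$ and a $\mathbb{P}^{19}$-bundle; I would then reorganise the total space as a $\mathbb{P}^{19}$-bundle over $\mathbb{G}r(2,4)\times\mathfrak{Fl}_1$, reading the $\mathbb{P}^{19}$ fibre as the linear system of plane quintics through the marked point of $P$. A dimension count shows the deeper strata of larger $\Ext^{1}(B,A)$ lie in the closure of the generic stratum, so they yield no further components.

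To verify the geometric description of a generic $E$ as an ideal sheaf of the stated reducible curve, I would take the long exact sequence of cohomology sheaves of the defining triangle $A\to E\to B$. In both cases $B$ is plane-supported with $\HH^1(B)\ne 0$ by Lemma \ref{Gfactor}, and $A$ has $\HH^{-1}(A)=0$ with $\HH^0(A)$ a twist of an ideal sheaf of a line. The sequence then reduces (generically) to $\HH^0(A)\hookrightarrow \HH^0(E)\twoheadrightarrow \mathcal{O}_P(-k)$ modulo a $0$-dimensional summand supported at the marked point of $A$, so Lemma \ref{lift} applied with $D=L$ identifies $\HH^0(E)$ with the ideal sheaf of the claimed line-plus-plane-quintic configuration. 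The connecting map of the same sequence forces $\HH^1(E)\ne 0$, supported at the marked points.

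Finally, to see that $\widetilde{\mathcal{N}_2}$ and $\mathcal{N}'_3$ undergo only birational modifications into $\widetilde{\widetilde{\mathcal{N}_2}}$ and $\widetilde{\mathcal{N}'_3}$, I would bound $\mathrm{ext}^1(E,E)$ along the destabilised loci via Lemma \ref{3.13}; the bound matches the expected $28$-dimensional count, so the contracted loci have strictly smaller dimension, and Lemma \ref{3.15} identifies each modification as a blowup along a smooth centre. The main obstacle I anticipate is the reorganisation of the $\mathcal{N}''_4$ base, namely reconciling the natural parameter space of $(A,B)$ pairs (which records \emph{two} marked points on $L$ as the zero locus of $s$) with the claimed $\mathbb{G}r(2,4)\times\mathfrak{Fl}_1$ (which records only one marked point, on $P$). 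This requires showing that, up to isomorphism of the extension $E$, one of the two points on $L$ migrates onto the plane quintic and becomes the $\mathfrak{Fl}_1$-datum, while the other is absorbed into the $\mathbb{P}^{19}$-fibre via the equivalence between $\mathrm{Sym}^2 L$-twisting of $A$ and moving within the linear system of plane quintics through the marked point.
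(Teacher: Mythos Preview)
There is a genuine gap in your setup: you cross the wrong walls. Between $\mathcal{N}_3$ and $\mathcal{N}_4$ only the single wall $W_1=\langle(\mathcal{O}(-1)\to\mathcal{O}_L),\,\iota_{P*}\mathcal{I}_{Z_1}^\vee(-5)\rangle$ is crossed; your $W_2=\langle(\mathcal{O}(-1)\to\mathcal{O}_L(1)),\,\mathcal{O}_P(-5)\rangle$ is the \emph{fifth} wall and is crossed only when passing from $\mathcal{N}_4$ to $\mathcal{N}_5$ (see Proposition~\ref{N5}). Although all three walls share the same tilt semicircle, in $\Stab(\mathbb{P}^3)$ they are distinct and are traversed one at a time along the path.

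Both new components $\mathcal{N}'_4$ and $\mathcal{N}''_4$ arise from \emph{different strata of the same wall $W_1$}. By Lemma~\ref{ext1purple2} the stratification of $\mathcal{U}\times\mathfrak{Fl}_1$ by $\dim\Ext^1(B,A)$ has three pieces: the open stratum (zero locus $q$ of $s$ off $P$) with $\Ext^1=\mathbb{C}^{19}$ gives $\mathcal{N}'_4$; the stratum where $q=L\cap P\neq Z_1$ has $\Ext^1=\mathbb{C}^{20}$, and since $q$ is then determined by $(L,P)$ the base collapses to $\mathbb{G}r(2,4)\times\mathfrak{Fl}_1$, giving $\mathcal{N}''_4$; the deepest stratum $q=Z_1$ has $\Ext^1=\mathbb{C}^{21}$. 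The first two strata are each $28$-dimensional, so your claim that ``a dimension count shows the deeper strata lie in the closure of the generic stratum'' is false here: the $\mathbb{C}^{20}$-stratum is a separate component, and the paper checks this by exhibiting the generic $\HH^0(E)$ there as the ideal sheaf of a \emph{disjoint} line and plane quintic (the connecting map $\HH^0(B)\to\HH^1(A)$ is nonzero, so $\HH^1(E)$ has length~$1$), whereas on the open stratum the curve is connected and $\HH^1(E)$ has length~$2$. What \emph{does} require work is showing the $\mathbb{C}^{21}$-stratum lies in the closure of $\mathcal{N}'_4$; the paper does this via the stable-pair interpretation and a $\mathrm{Hilb}^{[2]}$ argument on the nodal curve $L\cup C_5$.

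The ``obstacle'' you anticipate---reconciling two marked points on $L$ with a base $\mathbb{G}r(2,4)\times\mathfrak{Fl}_1$---is not a technical difficulty to be overcome but a symptom of having assigned $\mathcal{N}''_4$ to the wrong wall: the generic stratum of $W_2$ is the paper's $\mathcal{N}'_5$, a $\mathbb{P}^{19}$-bundle over $(\mathcal{U}\times_{\mathbb{G}r(2,4)}\mathcal{U})\times(\mathbb{P}^3)^\vee$, and no reorganisation turns that into $\mathbb{G}r(2,4)\times\mathfrak{Fl}_1$. Finally, survival of $\widetilde{\widetilde{\mathcal{N}_2}}$ and $\widetilde{\mathcal{N}'_3}$ is handled in the paper simply by Lemma~\ref{survival} (the generic $\HH^0$ is an ideal sheaf), not by a blow-up analysis via Lemmas~\ref{3.13} and~\ref{3.15}.
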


\begin{proof}
The first part comes from Lemma \ref{survival} and Propositions  \ref{N2} and \ref{N3}. The fourth wall on the right side of the hyperbola is $\langle(\mathcal{O}(-1) \xrightarrow{s} \mathcal{O}_{L}),\iota_{P_{*}}\mathcal{I}_{Z_{1}}^{\vee}(-5)\rangle$ (Theorem \ref{main}). Using Lemma \ref{ext1purple2} we obtain two  new components, $\mathcal{N}'_{4}$ and $\mathcal{N}''_{4}$ which are a $\mathbb{P}^{18}$-bundle  over $\mathcal{U} \times \mathfrak{Fl}_1 $ and a  $\mathbb{P}^{19}$-bundle  over $\mathbb{G}r(2,4)  \times \mathfrak{Fl}_1 $, respectively. We will show that the $\P^{20}$-bundle is contained in the closure of the $\P^{18}$-bundle.

To understand the objects in the new components more precisely, for any $E\in \mathrm{Ext}^1(B,A)$, we first want to understand its image in $\mathrm{Hom}(\HH^0(B), \HH^1(A))$ to see for which elements we get a non-zero map. Let $q$ be the zero locus of $s$. We notice that  we have the exact triangle $\HH^0(A)=\mathcal{I}_{L}(-1) \rightarrow A \rightarrow \HH^1(A)[-1]=\mathcal{O}_{q }[-1]$. Applying $\mathrm{RHom}(\HH^{0}(B)=\mathcal{O}_{P}(-5),-)$, we have:
$$0 \rightarrow \mathrm{Ext}^1(\mathcal{O}_{P}(-5), \mathcal{I}_L(-1))\rightarrow  \mathrm{Ext}^1(\mathcal{O}_{P}(-5), A) \rightarrow  \mathrm{Hom}(\mathcal{O}_{P}(-5),\HH^1(A)=\mathcal{O}_{q }) \rightarrow 0.$$
We consider the stratification of  $\mathcal{U} \times \mathfrak{Fl}_1 $, given by $\dim (\mathrm{Ext}^1(B,A))$, and consider a general object in each stratum:
\begin{description}
\item [(1) $L \not \subset P$, the zero  locus   of  $s$ is not   $ L \cap P$ and $ Z_1 \neq L \cap P$] 
In this case, using Lemma \ref{pullback}, we have
$$\mathrm{Ext}^1(\HH^0(B), A)=\mathrm{Ext}^{1}(\iota_{P_{*}}\mathcal{O}(-5),\mathcal{O}(-1) \rightarrow \mathcal{O}_{L})$$$$=\mathrm{Hom}(\mathcal{O}(-5),\iota_{P}^{*}(\mathcal{O} \rightarrow \mathcal{O}_{L}(1)))=\mathrm{H}^{0}(\iota_{P}^{*}(\mathcal{O} \rightarrow \mathcal{O}_{L}(1)) \otimes \mathcal{O}(5))= \mathrm{H}^0(\mathcal{I}_{L \cap P} (5)) =\C^{20},$$
 and $\mathrm{Hom}(\HH^0(B),\mathcal{O}_{q})=0$, and therefore any class in $\mathrm{Ext}^1(\HH^{0}(B), A)$ induces zero map in $\mathrm{Hom}(\HH^0(B),\HH^1(A))$. Moreover, we have 
 $$\Ext^1(B,A)=\mathrm{H}^{0}(\II_{(L \cap P)\cup Z_1}(5)) \hookrightarrow \Ext^{1}(\HH^{0}(B),A)=\mathrm{H}^0(\II_{L \cap P}(5)),$$
 and therefore the image is exactly given by quintics containing $L \cap P$ and $Z_1$.  Now, from

\begin{center}\label{connectingdiag}

\begin{tikzpicture}[descr/.style={fill=white,inner sep=1.5pt}]
        \matrix (m) [
            matrix of math nodes,
      row sep=1em,
            column sep=2.5em,
            text height=1.5ex, text depth=0.25ex
        ]
        {  &\HH^0(A) =\mathcal{I}_{L}(-1) &\HH^0(E) & \HH^0(B)=\mathcal{O}_{P}(-5)\\
             & \HH^1(A)=\mathcal{O}_{q } & \HH^1(E) & \HH^1(B)=\mathcal{O}_{Z_{1} },\\
        };

        \path[overlay,->, font=\scriptsize,>=latex]
        
        (m-1-2) edge (m-1-3)
        (m-1-3) edge (m-1-4)
        (m-1-4) edge[out=355,in=175] node[descr,yshift=0.3ex] {} (m-2-2)
        (m-2-2) edge (m-2-3)
        (m-2-3) edge (m-2-4)
      ;
\end{tikzpicture}

\end{center}
as the connecting map is zero, $\HH^1(E)$ have length 2.  Moreover, the top row is a short exact sequence $0 \to \II_L(-1) \hookrightarrow \HH^{0}(E) \twoheadrightarrow \OO_P(-5) \to 0$. The induced extension class 
$$ \mathrm{Ext}^1(\mathcal{O}_{P}(-5), \II_L(-1))=\mathrm{Ext}^1(\mathcal{O}_{P}(-5), i^{*}_{P} \II_L(-1)(1)[-1])=\mathrm{H}^0(\mathcal{I}_{L \cap P}(5))$$
 corresponds to quintics containing the intersection point.  Using Lemma \ref{lift}, 
 this means that for a generic object in $\Ext^1(B,A)$, the sheaf  $\HH^{0}(E)$ is ideal sheaf of  the  union of a line in $\P^3$ and a plane quintic intersecting in $L \cap P$. This together with the description of $\HH^{1}(E)$  implies the result for $\NN'_{4}$. 
\item [(2) $L \not \subset P$ and zero  locus   of  $s$  is   $L \cap P$, but $\neq Z_1$]  Using Lemma \ref{pullback}, we have

\begin{tikzcd}
                                                                                               & {\Ext^1(\HH^0(B),\HH^0(A))=\mathrm{H}^0(\II_q(5))} \arrow[d, "b"]            \\
{\mathrm{Ext}^1(B,A)=\mathrm{H}^{0}(\II_{Z_1}(5))} \arrow[r, "a"] & {\Ext^1(\HH^0(B),A)=\mathrm{H}^{0}(\OO_P(5))} \arrow[d, ""] \\
                                                                                               & {\Hom(\HH^0(B),\HH^1(A))=\mathrm{H}^0(\OO_q)=\C.}                                              
\end{tikzcd}

As $\im(a) \not \subset \im(b)$, the general element in $\mathrm{Ext}^1(B,A)$ induces a non-zero morphism $\HH^0(B) \to \HH^1(A)$, and so the connecting map in the above diagram is non-zero, which means $\HH^1(E)$ has length 1.  From the above diagram we have $\im(\HH^{0}(E) \to \HH^0(B)=\OO_P(-5))=\ker(\HH^0(B)=\OO_P(-5) \to \HH^{1}(A)=\OO_q)=\II_q(-5)$, which induces $0 \to \II_L(-1) \hookrightarrow \HH^{0}(E) \twoheadrightarrow \II_q(-5) \to 0$. This means that the extension class  
$$ \mathrm{Ext}^1(\II_q(-5), \II_L(-1))=\mathrm{Ext}^1(\iota_{P_{*}}\II_{L \cap P}(-5), \II_L(-1))$$$$=\mathrm{Ext}^1(\II_{L \cap P}(-5), i^{*}_{P} \II_L(-1)(1)[-1])=\mathrm{Hom}(\II_{L \cap P}(-5), \mathcal{I}_{L \cap P})=\mathrm{H}^{0}(\OO_P(5))$$
 corresponds to quintics not necessarily containing the intersection point.

 This means for a generic object in $\Ext^1(B,A)$, the sheaf   $\HH^{0}(E)$ is an ideal sheaf of the disjoint union of a line in $\P^3$ and a plane quintic. This together with the description of $\HH^{1}(E)$  completes the result for $\NN''_{4}$. 

\item [(3) $L \not \subset P$ and zero  locus   of  $s$  is   $L \cap P=Z_1$] Consider the diagram (by Lemma \ref{ext1purple2})


\begin{tikzcd}
                                                                                               & {\Ext^1(\HH^0(B),\HH^0(A))=\mathrm{H}^0(\II_q(5))} \arrow[d, "b"]            \\
{\mathrm{Ext}^1(B,A)
=\mathrm{H}^{0}(\II_q(5) \oplus (\II_q \otimes \OO_q[-1]))} \arrow[r, "a"] & {\Ext^1(\HH^0(B),A)=\mathrm{H}^{0}(\OO(5)\oplus \OO_q[-1])} \arrow[d, "c"] \\
                                                                                               & {\Hom(\HH^0(B),\HH^1(A))=\mathrm{H}^0(\OO_q)=\C.}                                              
\end{tikzcd}

As $\im(a)=\im(b)$, the composition $c \circ a$ is zero. Therefore,  general $E \in \Ext^1(B,A)$ induces the zero connecting map in the above diagram \ref{connectingdiag}, i.e., it gives the following short exact sequences
$$\HH^0(A) =\mathcal{I}_{L}(-1) \hookrightarrow \HH^0(E) \twoheadrightarrow \HH^0(B)=\mathcal{O}_{P}(-5)$$
and
$$ \HH^1(A)=\mathcal{O}_{q }\hookrightarrow \HH^1(E) \twoheadrightarrow \HH^1(B)=\mathcal{O}_{q }.$$
Therefore,  the plane quintic is smooth, and using Lemma \ref{lift}, for $E \in \Ext^1(B,A)$ general, we have $\HH^0(E)=\II_{L \cup C_5}$ correspondsto the connected union of the quintic with $L$ intersecting in a node, and so the curve $L \cup C_5$ is Gorenstein. Thus, we can first apply  Lemma \ref{charPT} to realize $E$ as a stable pair, and then apply {\cite[Proposition B.5]{PT2009}} to identify the stable pair $E$ with $\HH^0(E)=\II_{L \cup C_5}$, with a length $2$ subschemes of $L \cup C_5$.   But from the second short exact sequence above, $\HH^1(E)$ is  supported at $q$, and therefore this length $2$ subscheme is supported at $q$.  On the other hand, any such subscheme is in the closure of the locus of $\HH ilb^{[2]}(L \cup C_5)$ corresponding to one point on $L$ and one point on $C_5$.  This implies that every object in the $\P^{20}$-bundle is in the closure of the $\P^{18}$-bundle over $\UU \times \mathfrak{Fl}_1$.
\end{description}
\end{proof}

Let $\mathcal{N}_{5}$, be the moduli space for the next chamber. Let $C_4 \subset P$ be a plane quartic and $\L$ a thickening of a line $L \subset P$. The other notations as defined right before Proposition \ref{N4}:
\begin{prop}  \label{N5}
The moduli space $\mathcal{N}_{5}$ has seven irreducible components: $\widetilde{\widetilde{\widetilde{\mathcal{N}_{2}}}}$, $\widetilde{\widetilde{ \mathcal{N}'_{3}}}$, $\widetilde{ \mathcal{N}'_{4}}$, $\widetilde{ \mathcal{N}''_{4}}$, $\mathcal{N}'_{5}$, $\mathcal{N}''_{5}$ and $\mathcal{N}'''_{5}$.  The first four are birational to their counterparts in  $\mathcal{N}_{4}$.  The component $\mathcal{N}'_{5}$ is a $\mathbb{P}^{19}$-bundle over $(\mathcal{U} \times_{\mathbb{G}r(2,4)} \mathcal{U}) \times (\mathbb{P}^{3})^{*}$, and it generically parametrizes the union of a line in $\P^3$ together with a choice of two points on it and a plane quintic intersecting the line. The component  $\mathcal{N}''_{5}$ is a $\mathbb{P}^{20}$-bundle over $\mathcal{U} \times (\mathbb{P}^{3})^{*}$, and it generically parametrizes the disjoint unions of a plane quintic and  a line in $\P^3$ together with a choice of a point on it. The component $\mathcal{N}'''_{5}$ is a $\mathbb{P}^{21}$-bundle over $\mathfrak{Fl}_2$, and it generically parametrizes the union of a plane quartic with a thickening of a line in the plane.  For $\mathcal{N}'_{5}$  and $\mathcal{N}''_{5}$,   any generic element has $\HH^0$ an ideal sheaf, and $\HH^1$ non-zero. In $\mathcal{N}'''_{5}$, any generic element is of the form $\II_{\L \cup C_4}$.


\end{prop}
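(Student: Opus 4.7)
The plan is to follow the pattern of the proofs of Propositions~\ref{N2}--\ref{N4}. By Theorem~\ref{main}, the wall now being crossed is $\langle \mathcal{O}(-1) \to \mathcal{O}_L(1),\, \mathcal{O}_P(-5)\rangle$, and the relevant Ext groups are computed in Lemma~\ref{ext1purple3}. For each of the four irreducible components of $\mathcal{N}_4$, Propositions~\ref{N2},~\ref{N3},~\ref{N4} already give that $\mathcal{H}^0$ of a generic element is an ideal sheaf, so Lemma~\ref{survival} yields the four strict transforms $\widetilde{\widetilde{\widetilde{\mathcal{N}_2}}}$, $\widetilde{\widetilde{\mathcal{N}'_3}}$, $\widetilde{\mathcal{N}'_4}$, $\widetilde{\mathcal{N}''_4}$ inside $\mathcal{N}_5$.

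For the three new components I would stratify the space of pairs $(A,B) = \bigl((\mathcal{O}(-1) \xrightarrow{s} \mathcal{O}_L(1)),\, \mathcal{O}_P(-5)\bigr)$ by $\dim \Ext^1(B,A)$, following Lemma~\ref{ext1purple3}: the generic case $L \not\subset P$ with $\{q,q'\} \not\ni L \cap P$ gives $\mathbb{C}^{20}$; the intermediate case $L \not\subset P$ with $\{q,q'\} \ni L \cap P$ gives $\mathbb{C}^{21}$; and the degenerate case $L \subset P$ gives $\mathbb{C}^{22}$. The corresponding base parameter spaces are $(\mathcal{U} \times_{\mathbb{G}r(2,4)} \mathcal{U}) \times (\mathbb{P}^3)^{*}$, $\mathcal{U} \times (\mathbb{P}^3)^{*}$ (after identifying one point of $\{q,q'\}$ with $L \cap P$), and $\mathfrak{Fl}_2$, of dimensions $9,8,7$; the resulting projective bundles of ranks $19,20,21$ therefore all have total dimension $28$, matching the claimed $\mathcal{N}'_5$, $\mathcal{N}''_5$, $\mathcal{N}'''_5$.

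To identify the generic object in each new stratum I apply $\RHom(\mathcal{H}^0(B), -)$ to the triangle $\mathcal{H}^0(A) \to A \to \mathcal{H}^1(A)[-1]$, using $\mathcal{H}^0(A) = \mathcal{I}_L(-1)$ and $\mathcal{H}^1(A) = \mathcal{O}_{q \cup q'}$, together with the cohomology long exact sequence of $A \to E \to B$, exactly as in the proof of Proposition~\ref{N4}. In the generic case the connecting map $\mathcal{H}^0(B) \to \mathcal{H}^1(A)$ is forced to vanish because $\Hom(\mathcal{O}_P(-5), \mathcal{O}_{q \cup q'}) = 0$, yielding $\mathcal{H}^1(E) = \mathcal{O}_{q \cup q'}$ and, via Lemma~\ref{lift} applied to $D = L$, $\mathcal{H}^0(E) = \mathcal{I}_{L \cup C_5}$ with $C_5 \subset P$ a quintic through $L \cap P$. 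In the intermediate case the connecting map has rank one, giving $\mathcal{H}^1(E) = \mathcal{O}_{q'}$ with $q' \neq L \cap P$, and $\mathcal{H}^0(E) = \mathcal{I}_{L \sqcup C_5}$ with $C_5$ not passing through $L \cap P$, as read off from the induced extension class in $\Ext^1(\mathcal{I}_{L \cap P / P}(-5), \mathcal{I}_L(-1))$. In the degenerate case the connecting map is surjective, so $\mathcal{H}^1(E) = 0$ and $E$ is a sheaf fitting in $\mathcal{I}_L(-1) \hookrightarrow E \twoheadrightarrow \mathcal{I}_{\{q,q'\}/P}(-5)$ with $L \subset P$.

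The main obstacle is the degenerate case, since Lemma~\ref{lift} requires $L$ to be transverse to $P$ and therefore cannot be invoked directly. I plan to handle it by pulling back via $\iota_P$ and using the decomposition $\iota_P^* \mathcal{I}_L = \mathcal{O}_P(-1) \oplus \mathcal{O}_L(-1)$ of Lemma~\ref{pullback}: the extra $\mathcal{O}_L(-1)$ summand supplies exactly the embedded non-reduced structure along $L$ inside $P$, while tracking the generic extension class explicitly should identify $E$ with $\mathcal{I}_{\L \cup C_4}$, where $\L$ is the order-two thickening of $L$ inside $P$ and $C_4 \subset P$ is a plane quartic. Finally, the three new loci are distinct irreducible components: each is a projective bundle over an irreducible base, hence irreducible, and they are separated by the behaviour of $(\supp \mathcal{H}^0(E), \mathcal{H}^1(E))$ at a generic point, namely a line meeting a plane quintic at one point together with two embedded points, a disjoint union of a line and a plane quintic with one embedded point, and a purely planar sextic with a non-reduced component, respectively.
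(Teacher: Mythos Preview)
Your overall strategy matches the paper's proof: the survival of the four old components via Lemma~\ref{survival}, the stratification by $\dim\Ext^1(B,A)$ from Lemma~\ref{ext1purple3}, and the analysis of the connecting map $\HH^0(B)\to\HH^1(A)$ in cases (1) and (2) are carried out in the paper essentially as you describe.

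The difference is in the degenerate case $L\subset P$. You propose to analyse the extension $\II_L(-1)\hookrightarrow E\twoheadrightarrow\II_{\{q,q'\}/P}(-5)$ by pulling back to $P$ and using $\iota_P^*\II_L=\OO_P(-1)\oplus\OO_L(-1)$. The paper goes in the opposite direction: it first writes down the candidate ideal sheaf $\II_{\L\cup C_4}$ (with $\L$ the thickening of $L$ whose infinitesimal direction lies in $P$ exactly along $(L\cap C_4)\cup q\cup q'$), checks directly from $\L\cup C_4\subset\L\cup P$ that this sheaf sits in the short exact sequence $\II_L(-1)\hookrightarrow\II_{\L\cup C_4}\twoheadrightarrow\II_{q\cup q'/P}(-5)$, and then applies the octahedral axiom to the composition $E\twoheadrightarrow\II_{q\cup q'/P}(-5)\hookrightarrow\OO_P(-5)$ to identify the two-term complex $[E\to\OO_P(-5)]$ with $A=(\OO(-1)\to\OO_L(1))$. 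This shows the explicit ideal sheaf is an extension of $B$ by $A$, so it lies in the $\P^{21}$-bundle, and by irreducibility of that bundle it is the generic element. The advantage of the paper's route is that it never has to prove torsion-freeness of a general extension when $L\subset P$ (where Lemma~\ref{lift} is unavailable); your pullback approach would still owe that step.

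One smaller point: you assert that in the degenerate case the connecting map is surjective, but you do not say why. The paper obtains this from the vanishing $\Ext^2(B,\HH^0(A))=0$, which makes $\Ext^1(B,A)\to\Hom(B,\HH^1(A))$ surjective, so a general class induces a surjection $\OO_P(-5)\twoheadrightarrow\OO_{q\cup q'}$ and hence $\HH^1(E)=0$.
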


\begin{proof}
The first part comes from Lemmas \ref{survival}  and Propositions  \ref{N2}, \ref{N3} and \ref{N4}. The fifth wall on the right side of the hyperbola is $\langle(\mathcal{O}(-1) \xrightarrow{s} \mathcal{O}_{L}(1)), \mathcal{O}_{P}(-5)\rangle$ (Theorem \ref{main}). Using Lemma \ref{ext1purple3}, we get three  new components $\mathcal{N}'_{5}$, $\mathcal{N}''_{5}$ and $\mathcal{N}'''_{5}$  which are  a $\mathbb{P}^{19}$-bundle  over $(\mathcal{U} \times_{\mathbb{G}r(2,4)} \mathcal{U}) \times (\mathbb{P}^{3})^{*}$, a $\mathbb{P}^{20}$-bundle  over $\mathcal{U} \times (\mathbb{P}^{3})^{*}$, and a $\mathbb{P}^{21}$-bundle over $\mathfrak{Fl}_2$, respectively. 

 To understand the objects in the new components more precisely, for any $E\in \mathrm{Ext}^1(B,A)$, we want to understand its image in $\mathrm{Hom}(B, \HH^1(A))$ to see for which elements we get a non-zero map. Let $q \cup q'$ be the zero locus of $s$. We notice  we have the short exact sequence $\HH^0(A)=\mathcal{I}_{L}(-1) \rightarrow A \rightarrow \HH^1(A)[-1]=\mathcal{O}_{q \cup q'}[-1]$.  Taking $\mathrm{RHom}(\HH^0(B)=B,-)$ of this, we have
$$0 \rightarrow \mathrm{Ext}^1(B, \II_L(-1))=\mathbb{C}^{20} \rightarrow  \mathrm{Ext}^1(B, A) \rightarrow  \mathrm{Hom}(B,\HH^1(A)=\mathcal{O}_{q \cup q'}) \rightarrow 0.$$
Now, there are three cases:
\begin{description}

\item[(1) $L \not \subset P$ and  zero  locus   of  $s$  does not contain  $L \cap P$] In this case we have $\mathrm{Ext}^1(B, A)$
$=\mathrm{H}^0(\mathcal{I}_{L \cap P}(5))=\mathbb{C}^{20}$ (Lemma \ref{ext1purple3}) and $\mathrm{Hom}(B,\HH^1(A))=0$, and therefore any class in $\mathrm{Ext}^1(B, A)$ induces zero map in $\mathrm{Hom}(B,\HH^1(A))$. Now from 

\begin{center}

\begin{tikzpicture}[descr/.style={fill=white,inner sep=1.5pt}]
        \matrix (m) [
            matrix of math nodes,
            row sep=1em,
            column sep=2.5em,
            text height=1.5ex, text depth=0.25ex
        ]
        {   &\HH^0(A) =\mathcal{I}_{L}(-1) &\HH^0(E) & \HH^0(B)=\mathcal{O}_{P}(-5)\\
             & \HH^1(A)=\mathcal{O}_{q \cup q'} & \HH^1(E) & \HH^1(B)=0,\\
        };

        \path[overlay,->, font=\scriptsize,>=latex]
       
        (m-1-2) edge (m-1-3)
        (m-1-3) edge (m-1-4)
        (m-1-4) edge[out=355,in=175] node[descr,yshift=0.3ex] {} (m-2-2)
        (m-2-2) edge (m-2-3)
        (m-2-3) edge (m-2-4)
      ;
\end{tikzpicture}
\end{center}
as the connecting map is zero, $\HH^1(E)$ have length 2, and the top row is a short exact sequence $0 \to \II_L(-1) \hookrightarrow \HH^{0}(E) \twoheadrightarrow \OO_P(-5) \to 0$. The extension class
$$ \mathrm{Ext}^1(\mathcal{O}_{P}(-5), \II_L(-1))=\mathrm{H}^0(\mathcal{I}_{L \cap P}(5))$$
 corresponds to quintics containing the intersection point. Using Lemma \ref{lift}, 
 this means for a generic choice in $\Ext^1(B,A)$, the sheaf $\HH^{0}(E)$ is the ideal sheaf of the union of a line in $\P^3$ and a plane quintic intersecting the line. This together with the description of $\HH^{1}(E)$  gives the result about 
 $\NN'_{5}$. 

\item[(2) $L \not \subset P$ and  zero  locus   of  $s$  contains  $L \cap P$] In this case, we have 
$$\Ext^1(B,\HH^0(A))=\mathrm{H}^0(\II_{L \cap P} (5)) \hookrightarrow \Ext^1(B,A)=\mathrm{H}^{0}(\OO_P(5)) \rightarrow \Hom(B,\HH^1(A))=\C,$$
and so any general class in $\mathrm{Ext}^1(B, A)$ induces a non-zero map in $\mathrm{Hom}(B,\HH^1(A))$. This time as the connecting map in the above diagram is non-zero, $\HH^1(E)$ have length 1.  Similarly as in the previous Lemma, we have $\im(\HH^{0}(E) \to B=\OO_P(-5))=\ker(B=\OO_P(-5) \to \HH^{1}(A)=\OO_{q \cup q'})=\iota_{P_{*}}\II_{L \cap P}(-5)$. Therefore the diagram induces a short exact sequence $0 \to \II_L(-1) \hookrightarrow \HH^{0}(E) \twoheadrightarrow \iota_{P_{*}}\II_{L \cap P}(-5) \to 0$. This means that the extension class  
$$ \mathrm{Ext}^1(\iota_{P_{*}}\II_{L \cap P}(-5), \II_L(-1))=\mathrm{Ext}^1(\II_{L \cap P}(-5), i^{*}_{P} \II_L(-1)(1)[-1])$$$$=\mathrm{Hom}(\II_{L \cap P}(-5), \mathcal{I}_{L \cap P})=\mathrm{H}^{0}(\OO_P(5))$$
 corresponds to quintics not necessarily containing the intersection point. 
 This means for a generic class in $\Ext^1(B,A)$, the sheaf $\HH^{0}(E)$ is the ideal sheaf of the disjoint union of a line in $\P^3$ and a plane quintic. This together with the description of $\HH^{1}(E)$  implies the result 
 for $\NN''_{5}$. 

\item[(3) $L \subset P$] Recall that from \ref{ext1purple3}, we have $\Ext^1(B,A)= \mathrm{H}^0(\OO_L(6))\oplus \mathrm{H}^0(\II_L(5))=\C^6 \oplus \C^{15}=\C^{22} $. Therefore in this case, we have a $\P^{21}$-bundle over a $3+4=7$-dimensional (3 for the choice of a plane and 3 for two points in the plane) locus, which gives a stratum of dimension $28$, and therefore it corresponds to a new component, $\NN'''_5$. 
Notice that we have $\Ext^2(B,\HH^0(A))=0$; thus we have

$$\Ext^1(B,A) \xrightarrowdbl{\gamma} \Hom(B,\HH^1(A)).$$
Surjectivity of $\gamma$ implies that a general class in $\Ext^1(B,A)$ induces a surjective connecting map $B \to \HH^1(A)$. This means that $\HH^1(E)=0$, for any general element $E$ in the component. Therefore we have $\ker (\text{connecting map})=\ker(\OO_P(-5) \twoheadrightarrow \OO_{q \cup q'})=\II_{q \cup q'/P}(-5)$, and thus any general element $E$ in the component fits into
$$\II_L(-1) \hookrightarrow E \twoheadrightarrow \II_{q \cup q'/P}(-5).$$


Now, let $\L$ be the double line obtained by thickening   $L$, with tangent direction of infinitesimal thickening contained in the plane at $(L \cap C_4) \cup q \cup q'$. From $\L \cup C_4 \subset \L \cup P$, we get $$\II_{L}(-1)=\II_{\L \cup P} \hookrightarrow \II_{\L \cup C_4} \twoheadrightarrow \II_{(\L \cup C_4)\cap P/P}=\II_{L \cup C_4 \cup q \cup q'/P}=\II_{q \cup q'/P}(-5).$$
On the other hand, considering the composition $E=\II_{\L \cup C_4} \twoheadrightarrow \II_{q \cup q'/P}(-5) \hookrightarrow \OO_P(-5)$ gives the exact triangle
$$\big(E \to \OO_P(-5)\big) \to E \to \OO_P(-5).$$
But $\big(E \to \OO_P(-5)\big)  \cong \big(\OO(-1) \to \OO_L(1)\big)=A$: Let $K[1]$ be the cone of the composition $E \to  \OO_P(-5)$. From the octahedral axiom, we get an exact triangle $\II_L(-1) \to K \to \mathcal{O}_{q \cup q'}[-1]$. But we have $Ext^1(\mathcal{O}_{q \cup q'}[-1], \II_L(-1))=0$, and therefore we get $K=\mathcal{O}_{q \cup q'}[-1] \oplus \II_L(-1) $. Then the original exact triangle we started with implies $[E \to \OO_P(-5)]\cong \mathcal{O}_{q \cup q'}[-1] \oplus \II_L(-1) $. Therefore $\HH^{0}([E \to \OO_P(-5)])
= \II_L(-1)=\HH^0(A)$, and $\HH^{1}([E \to \OO_P(-5)])
=\mathcal{O}_{q \cup q'}=\HH^1(A)$.

    

Therefore our $E$ also arises via a class in $\Ext^1(B,A)$. 
This means for a generic class in $\Ext^1(B,A)$, the sheaf $E$ is the ideal sheaf of  a plane quartic with a thickening of the line $L$. This completes the result about $\NN'''_{5}$. 
\end{description}
\end{proof}

Let $\mathcal{N}_{6}$, be the moduli space for the next chamber. The other notations as defined right before Proposition \ref{N4}:

\begin{prop}  \label{N6}
The moduli space $\mathcal{N}_{6}$ has eight irreducible components: $\widetilde{\widetilde{\widetilde{\widetilde{\mathcal{N}_{2}}}}}$, $\widetilde {\widetilde{\widetilde{\mathcal{N}'_{3}}}}$, $\widetilde{\widetilde{\mathcal{N}'_{4}}}$, $\widetilde{\widetilde{\mathcal{N}''_{4}}}$, $\widetilde{\mathcal{N}'_{5}}$, $\widetilde{\mathcal{N}''_{5}}$, $\widetilde{\mathcal{N}'''_{5}}$, and $\mathcal{N}'_{6}$.  
The first seven are birational to their counterparts  in  $\mathcal{N}_{5}$.  The component $\mathcal{N}'_{6}$ is a $\mathbb{P}^{21}$-bundle over $\mathfrak{Fl}_6$, and it generically parametrizes plane degree 6 curves together with a choice of 6 points on it.  For any general element $E$ in the new component, $\HH^0(E)$ is 
an ideal sheaf $\II_{C_6}$, where $C_6$ is a plane sextic curve, and $\HH^1 \neq 0$. 

\end{prop}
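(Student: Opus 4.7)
The argument follows the same template as Propositions \ref{N3}--\ref{N5}.  I would begin by applying Lemma \ref{survival} together with Propositions \ref{N2}--\ref{N5}: since each of the seven components of $\NN_5$ has a generic element whose $\HH^0$ is an ideal sheaf, each persists (up to birational equivalence) as a component of $\NN_6$, accounting for the first seven items in the list.

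Next I would read off the new wall from Theorem \ref{main}: the sixth wall on the right of $\H$ is $\langle \OO(-1), \iota_{P_*}\II_{Z_6}^\vee(-6)\rangle$. Newly stable objects arise as extensions $A \to E \to B$ with $A = \OO(-1)$ rigid and $B = \iota_{P_*}\II_{Z_6}^\vee(-6)$ varying in $\mathfrak{Fl}_6$, which has dimension $3+12=15$. Lemma \ref{ext1pink} gives $\Ext^1(B,A) \cong \mathrm{H}^0(\II_{Z_6/P}(6)) = \C^{22}$, yielding a $\P^{21}$-bundle of total dimension $36$.  Since every surviving component has dimension at most $28$, this stratum is a genuinely new irreducible component $\NN'_6$.

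To describe the generic element, I would apply the long exact sequence of $\Coh(\P^3)$-cohomology to $A \to E \to B$.  Using $\HH^1(A) = 0$ and the cohomology of $B$ (namely $\HH^0(B) = \OO_P(-6)$ and $\HH^1(B) = \OO_{Z_6}$, read off from the triangle $\OO_P(-6) \to B \to \OO_{Z_6}[-1]$ coming from the derived dualisation of $\II_{Z_6}$ on the plane $P$), this produces $\HH^1(E) \cong \OO_{Z_6} \neq 0$ and a short exact sequence
\[
0 \to \OO(-1) \to \HH^0(E) \to \OO_P(-6) \to 0.
\]
Lemma \ref{lift} with $D = \emptyset$ then shows $\HH^0(E)$ is torsion-free, hence an ideal sheaf; and the identification $\Ext^1(B,A) \cong \mathrm{H}^0(\II_{Z_6/P}(6))$ supplied by Lemma \ref{ext1pink} means that the extension class of $E$ cuts out a plane sextic $C_6 \subset P$ containing $Z_6$, so $\HH^0(E) = \II_{C_6}$ as claimed.

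The most delicate step I expect is the last one: pinning down that the image of the class of $E$ inside $\Ext^1(\OO_P(-6),\OO(-1)) \cong \mathrm{H}^0(\OO_P(6))$ lies in the subspace $\mathrm{H}^0(\II_{Z_6/P}(6))$ of sextics through the chosen points, so that the resulting curve really has $Z_6$ on it and the PT-pair structure of Lemma \ref{charPT} is consistent.  This is exactly what the Grothendieck-duality computation in the proof of Lemma \ref{ext1pink} delivers, so the remainder of the argument is bookkeeping once that identification is in hand.
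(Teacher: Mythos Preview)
Your proposal is correct and follows essentially the same approach as the paper: invoke Lemma~\ref{survival} and Propositions~\ref{N2}--\ref{N5} for the seven surviving components, read off the sixth wall from Theorem~\ref{main}, use Lemma~\ref{ext1pink} to get the $\P^{21}$-bundle over $\mathfrak{Fl}_6$, and then apply the cohomology long exact sequence together with Lemma~\ref{lift} (for $D=\emptyset$) to identify $\HH^0(E)$ as an ideal sheaf of a plane sextic and $\HH^1(E)\neq 0$. Your final paragraph spelling out that the sextic passes through $Z_6$ is more explicit than the paper, which leaves this implicit in the identification $\Ext^1(B,A)\cong \mathrm{H}^0(\II_{Z_6}(6))$ from Lemma~\ref{ext1pink}.
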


\begin{proof}
The first part comes from Lemma \ref{survival},  and Propositions  \ref{N2}, \ref{N3}, \ref{N4} and \ref{N5}. For the new component, we notice that the sixth wall on the right side of the hyperbola is given by $\langle\mathcal{O}(-1),\iota_{P_{*}}\mathcal{I}_{Z_{6}}^{\vee}(-6)\rangle$ (Theorem \ref{main}). Using Lemma \ref{ext1pink}, the new component, $\mathcal{N}'_{6}$, is a $\mathbb{P}^{21}$-bundle over $\mathfrak{Fl}_6$  generically parametrizes plane degree 6 curves together with a choice of 6 points on it. Description of the generic element as an ideal sheaf is easily obtained by noticing that $\HH^1(\OO(-1))=0$, and then taking the long exact sequence of the defining sequence $\OO(-1) \into E \onto \iota_{P_{*}}(\mathcal{I}_{Z_{6}})^{\vee}(-5)$ of any class $E$ in $\Ext^1(B,A)$, and applying Lemma \ref{lift} to $D=\emptyset$. The claim $\HH^1$ being non-zero is obtained in a similar way and noticing that $\HH^1(\iota_{P_{*}}\mathcal{I}_{Z_{6}}^{\vee}(-6))\neq 0$.


\end{proof}

\section{The space of stable pairs} \label{Space of stable pairs}
In this section, we give a full description of the irreducible components of the moduli space of PT stable pairs by summarizing the results in the previous section on the description of the associated moduli spaces to the chambers.

\begin{thm} \label{moduliPT1}
The moduli space of PT stable pairs $\mathcal{O}_{\mathbb{P}^{3}} \rightarrow \mathcal{F}$, where  $Ch(\mathcal{F})=(0,0,6,-15)$  consists of components birational to the following eight irreducible components (the first one is 24-dimensional, the last one is 36-dimensional, and the rest are all 28-dimensional):

(1) A $\mathbb{P}^{15}$-bundle over $|\OO(2)|$, which generically parametrizes (2,3)-complete intersections,

(2) a $\mathbb{P}^{17}$-bundle over $\mathbb{G}r(2,4) \times \mathfrak{Fl}_2 $, which generically parametrizes the union of a line and a plane quintic intersecting the line together with a choice of two points on the quintic,

(3) a $\mathbb{P}^{18}$-bundle over $\mathcal{U}  \times \mathfrak{Fl}_1 $,  which generically parametrizes the union of  a  line in $\P^3$ together with a choice of a point on it,  and  a plane quintic intersecting the line, together with a choice of a point  on it,

(4) a $\mathbb{P}^{19}$-bundle over $\mathbb{G}r(2,4) \times \mathfrak{Fl}_1 $, which  generically parametrizes the disjoint union of a  line in $\P^3$ and a plane quintic together with a choice of a point  on it,

(5) a $\mathbb{P}^{19}$-bundle over $(\mathcal{U} \times_{\mathbb{G}r(2,4)}\mathcal{U}) \times (\mathbb{P}^{3})^{\vee} $, which  generically parametrizes the union of a line in $\P^3$ together with a choice of two points on it,  and a plane quintic intersecting the line,

(6) a $\mathbb{P}^{20}$-bundle over $\mathcal{U}  \times (\mathbb{P}^{3})^{\vee} $, which  generically parametrizes the disjoint union of  a line in $\P^3$ together with a choice of a point on it, and a plane quintic,

(7) a $\mathbb{P}^{21}$-bundle over $\mathfrak{Fl}_2,$ which generically parametrizes the union of a plane quartic with a thickening of a line in the plane, and

(8) a $\mathbb{P}^{21}$-bundle over $\mathfrak{Fl}_6,$ which generically parametrizes a plane degree 6 curve together with a choice of 6 points on it.

\noindent
Here  $|\OO(2)|$ parametrizes quadric surfaces in $\mathbb{P}^{3}$, $\mathcal{U}$ is the universal line over $\mathbb{G}r(2,4)$, and $\mathfrak{Fl}_{l} $ is the space  parametrizing flags $Z_{l} \subset P \subset \mathbb{P}^{3}$
where $P$ is a plane and $Z_{l}$ a zero-dimensional subscheme of length $l$ ($l=1,2$).
\end{thm}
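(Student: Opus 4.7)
The statement essentially repackages the description of $\mathcal{N}_6$ from Proposition \ref{N6}, so the plan is to identify $\mathcal{N}_6$ with the PT moduli space and then read off the eight components.

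First I would verify that $\mathcal{N}_6$ is the moduli space of Bridgeland-stable objects in the chamber adjacent to the large-volume limit on the right side of $\H$. By Theorem \ref{main} combined with Proposition \ref{Lemma 9}, the walls on the right side of $\H$ are exhausted by the six listed semicircles, all intersecting $\beta^2 - \alpha^2 = 12$; together with Theorem \ref{WC}, this rules out any further walls between the outermost semicircle and the limit $s \to \infty$. Hence the moduli space of $\lambda_{\alpha,\beta,s}$-stable objects of class $v = (1,0,-6,15)$ stabilizes to $\mathcal{N}_6$ once one is beyond the sixth wall.

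Next I would identify this limiting moduli space with the space of PT stable pairs. Applying Lemma \ref{BMS8.9} to $E[1]$ (noting that objects on the right of $\H$ live in $\mathrm{Coh}^{\alpha,\beta}(\P^3)[-1]$) and matching the conclusion with the characterization in Lemma \ref{charPT}, one sees that $E$ is $\lambda_{\alpha,\beta,s}$-stable for $s \gg 0$ precisely when $E[1]$ is a PT stable pair $\OO_{\P^3} \to \FF$ with $\ch(\FF) = (0,0,6,-15)$. This gives the desired identification of moduli spaces.

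Finally, I would transcribe the eight components from Proposition \ref{N6} into the numbering of the theorem: component (1) is $\widetilde{\widetilde{\widetilde{\widetilde{\mathcal{N}_{2}}}}}$ from Proposition \ref{N2}, (2) is $\widetilde{\widetilde{\widetilde{\mathcal{N}'_{3}}}}$ from Theorem \ref{N3}, (3)--(4) are $\widetilde{\widetilde{\mathcal{N}'_4}}$ and $\widetilde{\widetilde{\mathcal{N}''_4}}$ from Proposition \ref{N4}, (5)--(7) are $\widetilde{\mathcal{N}'_5}$, $\widetilde{\mathcal{N}''_5}$, $\widetilde{\mathcal{N}'''_5}$ from Proposition \ref{N5}, and (8) is $\mathcal{N}'_6$ from Proposition \ref{N6}. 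Survival across the intermediate walls is guaranteed by Lemma \ref{survival}, whose hypothesis (that the generic $\HH^0$ is an ideal sheaf) was checked in each of Propositions \ref{N2}--\ref{N6} via Lemma \ref{lift}. A routine dimension tally confirms $15+9 = 24$ for (1), $21+15 = 36$ for (8), and $28$ for the middle six.

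The main obstacle is really the numerical wall analysis and the generic-element identification, but both are already carried out in Theorem \ref{main} and Propositions \ref{N2}--\ref{N6}; so the present theorem is a bookkeeping consolidation of those results, with Lemma \ref{survival} as the crucial glue ensuring each component persists into the PT chamber.
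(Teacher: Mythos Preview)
Your proposal is correct and follows essentially the same approach as the paper: cite Propositions \ref{N1}--\ref{N6} together with Lemma \ref{survival} and read off the eight components of $\mathcal{N}_6$. You add an explicit step identifying $\mathcal{N}_6$ with the PT moduli space via Lemmas \ref{BMS8.9} and \ref{charPT}, which the paper leaves implicit; conversely, the paper adds the remark that each newly created locus is a genuinely new irreducible component (rather than lying in the closure of a previously existing one) because its dimension is maximal, a point you rely on only through the statement of Proposition \ref{N6}.
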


\begin{proof}
This is obtained from Propositions \ref{N1}, \ref{N2}, \ref{N4}, \ref{N5}, and \ref{N6} and Lemma \ref{survival}. We notice that all the loci appearing after each wall are new irreducible components as they have the maximal dimensions and cannot be considered as a subset of the previous components.


\end{proof}

Now, we summarize the birational description of the components of the intermediate moduli spaces:
\begin{thm} \label{moduliPTsecondtry}
Consider a path $\gamma\colon [0,1] \rightarrow\mathbb{R}^{+} \times \mathbb{R} \subset \Stab(\mathbb{P}^{3})$. The image of this path (outside of the walls) describes the moduli spaces of semistable objects with the fixed Chern character $(1,0,-6,15)$ for the corresponding chambers which are as follows:

  ($\mathcal{N}_{0}$): the empty space,
  
  ($\mathcal{N}_{1}$):   only contains the main component. 
  
  ($\mathcal{N}_{2}$):  the blow up of $\mathcal{N}_{1}$ along the smooth locus  $(\mathbb{P}^{3})^{*} \times (\mathbb{P}^{3})^{*} \times \mathcal{H}ilb(C_{2})$.

 ($\mathcal{N}_{3}$): has two irreducible components: $\widetilde{\mathcal{N}_{2}}$ and $\mathcal{N}'_{3}$. The component $\widetilde{\mathcal{N}_{2}}$ is obtained from $\NN_2$ by a small contraction followed by a divisorial extraction. The component $\mathcal{N}'_{3}$ is a new component described in (2) in Theorem \ref{moduliPT1}.

  ($\mathcal{N}_{4}$): has four irreducible components: $\widetilde{\widetilde{\mathcal{N}_{2}}}$, $\widetilde{\mathcal{N}'_{3}}$, $\mathcal{N}'_{4}$ and $\mathcal{N}''_{4}$.  
  The first two are birational to their counterparts  in  $\mathcal{N}_{3}$. The components   $\mathcal{N}'_{4}$  and $\mathcal{N}''_{4}$ are two new components described  in (3) and (4) in Theorem  \ref{moduliPT1}, respectively.

   ($\mathcal{N}_{5}$): has seven irreducible components: $\widetilde{\widetilde{\widetilde{\mathcal{N}_{2}}}}$, $\widetilde{\widetilde{ \mathcal{N}'_{3}}}$, $\widetilde{ \mathcal{N}'_{4}}$,   $\widetilde{ \mathcal{N}''_{4}}$,  $ \mathcal{N}'_{5}$, $\mathcal{N}''_{5}$, and $\mathcal{N}'''_5$.  
   The first four are birational to their counterparts  in  $\mathcal{N}_{4}$. The components   $\mathcal{N}'_{5}$,  $\mathcal{N}''_{5}$, and $\mathcal{N}'''_{5}$ are three new components described in (5), (6) and (7) in Theorem  \ref{moduliPT1}, respectively.

 ($\mathcal{N}_{6}$): has eight irreducible components: $\widetilde{\widetilde{\widetilde{\widetilde{\mathcal{N}_{2}}}}}$, $\widetilde {\widetilde{\widetilde{\mathcal{N}'_{3}}}}$, $\widetilde{\widetilde{\mathcal{N}'_{4}}}$, $\widetilde{\widetilde{\mathcal{N}''_{4}}}$, $\widetilde{\mathcal{N}'_{5}}$, $\widetilde{\mathcal{N}''_{5}}$,  $\widetilde{\mathcal{N}'''_{5}}$, and $\mathcal{N}'_{6}$.  
 The first seven are birational to their counterparts  in  $\mathcal{N}_{5}$. The component  $\mathcal{N}'_{6}$ is a new component described in (8)  in Theorem  \ref{moduliPT1}.

\end{thm}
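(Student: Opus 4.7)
The plan is to assemble the theorem by walking along the path $\gamma$ and invoking, chamber by chamber, the structural results already established in Section \ref{chambers}. The empty chamber $\mathcal{N}_0$ is Proposition 3.14 of \cite{R1}. For $\mathcal{N}_1$ we apply Proposition \ref{N1}, and for $\mathcal{N}_2$ we apply Proposition \ref{N2}, which gives the blow-up along the smooth codimension-$4$ locus identified with $\mathcal{H}ilb^{2t+1}(\P^3) \times (\P^3)^{\vee}$. The new components in $\mathcal{N}_3,\ldots,\mathcal{N}_6$ are exactly those furnished by Theorem \ref{N3} and Propositions \ref{N4}, \ref{N5}, \ref{N6}. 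The persistence (up to birational equivalence) of already-constructed components across each subsequent wall is guaranteed by Lemma \ref{survival}, since in every case the constructed general element has $\HH^0$ an ideal sheaf, so no destabilizing sub-object of the form $\mathcal{O}_P(-i)$ can inject into it at later walls.

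The one claim that is genuinely new relative to Section \ref{chambers} is the birational description of the transition $\mathcal{N}_2 \dashrightarrow \widetilde{\mathcal{N}_2}$ as a small contraction followed by a divisorial extraction. The plan is to analyze the wall $\langle\mathcal{I}_L(-1),\iota_{P_*}\mathcal{I}_{Z_2}^{\vee}(-5)\rangle$ using the Ext computations in Lemma \ref{ext1purple1}. On the $\mathcal{N}_2$-side, the locus of destabilized objects is fibered in projective spaces $\P(\Ext^1(\iota_{P_*}\mathcal{I}_{Z_2}^{\vee}(-5),\mathcal{I}_L(-1))) = \P^{17}$ over the $10$-dimensional base $\mathbb{G}r(2,4)\times\mathfrak{Fl}_2$, while on the other side the corresponding projectivization $\P(\Ext^1(\mathcal{I}_L(-1),\iota_{P_*}\mathcal{I}_{Z_2}^{\vee}(-5)))$ is generically empty (jumping to a point or a line in the two special degenerations of Lemma \ref{ext1purple1}). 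The dimension gap forces the wall-crossing to first collapse the $\P^{17}$-fibers along the wall (a small contraction because its codimension in $\mathcal{N}_2$ exceeds one only in the generic stratum) and then to extract a divisor on the other side where the new $\P^{17}$-bundle $\mathcal{N}'_3$ appears; the identification of these birational operations with a blow-down/blow-up follows from Lemma \ref{3.15} once one checks irreducibility and smoothness of the contracted locus, while Lemma \ref{3.13} gives the $\Ext^1(E,E)$ estimates needed to guarantee smoothness of the ambient moduli at semistable points.

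The remaining walls are easier: for each of the walls producing $\mathcal{N}'_4$, $\mathcal{N}''_4$, $\mathcal{N}'_5$, $\mathcal{N}''_5$, $\mathcal{N}'''_5$, $\mathcal{N}'_6$, the relevant $\Ext^1(A,B)$ either vanishes generically or is much smaller than $\Ext^1(B,A)$, so no further modification of the pre-existing components occurs; only new components are glued in. This is exactly the reading of Propositions \ref{N4}--\ref{N6}, and the components' generic geometric descriptions are read off directly from the defining short exact sequences analyzed there.

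The main obstacle will be the small contraction/divisorial extraction analysis at $\mathcal{N}_2\dashrightarrow\mathcal{N}_3$: showing rigorously that the contraction is small (rather than divisorial) requires verifying that the destabilizing locus on $\mathcal{N}_2$ has codimension $\geq 2$ and that the fibration $\P^{17}\to\mathbb{G}r(2,4)\times\mathfrak{Fl}_2$ collapses to its base at the wall, which is the step where both the Ext bookkeeping of Section \ref{Ext computation of the walls on the stable pairs side} and the birational-geometry tool Lemma \ref{3.15} are needed together.
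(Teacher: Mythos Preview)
Your overall framework matches the paper's: the theorem is assembled from Propositions \ref{N1}--\ref{N6} and Lemma \ref{survival}, and the only step requiring separate justification is the birational description of $\mathcal{N}_2 \dashrightarrow \widetilde{\mathcal{N}_2}$. The paper handles that last step simply by citing \cite{R1}, where the analysis was carried out. Your attempt to sketch this step directly, however, contains a substantive error.

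You have the two $\Ext^1$ groups on the wrong sides of the wall. With the paper's conventions (crossing towards the large volume limit, $A=\mathcal{I}_L(-1)$ the subobject, $B=\iota_{P_*}\mathcal{I}_{Z_2}^{\vee}(-5)$ the quotient), the \emph{newly stable} objects in $\mathcal{N}_3$ are extensions $A\to E\to B$, parametrized by $\P(\Ext^1(B,A))=\P^{17}$ over $\mathbb{G}r(2,4)\times\mathfrak{Fl}_2$; this is precisely the new component $\mathcal{N}'_3$. The \emph{destabilized} objects in $\mathcal{N}_2$ are extensions $B\to E\to A$, parametrized by $\P(\Ext^1(A,B))$, which is generically empty and at most a $\P^1$. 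So your claim that the destabilizing locus in $\mathcal{N}_2$ is a $\P^{17}$-bundle is impossible on dimension grounds alone: $\mathcal{N}_2$ has dimension $24$, while a $\P^{17}$-bundle over the $11$-dimensional base (not $10$: $\dim\mathbb{G}r(2,4)=4$ and $\dim\mathfrak{Fl}_2=7$) would have dimension $28$. The actual picture, recorded in Remark \ref{destabilizing}, is that the destabilizing locus in $\mathcal{N}_2$ is small (dimension $\le 11$), and that is what makes the contraction small; the divisor is then extracted inside $\widetilde{\mathcal{N}_2}$ from the other side. Your final paragraph also overstates things slightly: the later walls do destabilize small loci in the existing components (again see Remark \ref{destabilizing}), so ``no further modification'' is too strong; what survives is the birational type, which is all the theorem asserts.
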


\begin{proof}
This is obtained from Propositions \ref{N1}, \ref{N2}, \ref{N4}, \ref{N5}, and \ref{N6}, and Lemma \ref{survival}. The birational description of the wall between $\mathcal{N}_2$ and $\mathcal{N}_3$ is followed by \cite{R1}.

\end{proof}

\begin{rmk} \label{destabilizing}
From  Lemmas \ref{ext1green}, \ref{ext1purple1}, \ref{ext1purple2}, \ref{ext1purple3},  \ref{ext1pink}, the destabilizing loci for each of the moduli spaces corresponding to a chamber along the path on the right hand side in Figure \ref{walls} are described as follows:

  $(\mathcal{N}_{1}$):   There is an $11$ dimensional destabilizing locus $(\mathbb{P}^{3})^{*} \times (\mathbb{P}^{3})^{*} \times \mathcal{H}ilb(C_{2})$   in $\mathcal{N}_{1}$.
  
  ($\mathcal{N}_{2}$):  There is a destabilizing locus with two strata  in $\mathcal{N}_{2}$:   one is $\mathbb{G}r(2,4) \times \mathfrak{Fl}_2 $ of dimension 11, and the other is a $\P^1$-bundle over $\mathfrak{Fl}_2 $ of dimension 8.
  
 ($\mathcal{N}_{3}$):
  There is a destabilizing locus with two strata  in $\mathcal{N}_{3}$: one is $\mathbb{G}r(2,4)  \times \mathfrak{Fl}_1 $ of dimension 9, and the other is a $\P^1$-bundle over $(\P^2)^{*} \times \mathfrak{Fl}_1 $ of dimension 8.

   ($\mathcal{N}_{4}$): There is a destabilizing locus with two strata  in $\mathcal{N}_{4}$: one is  $\mathcal{U} \times (\mathbb{P}^{3})^{*}$ of dimension 8, and the other is a $\P^1$-bundle over $\mathfrak{Fl}_ 2$ of dimension 8.

   ($\mathcal{N}_{5}$): There is a destabilizing locus with three strata  in $\mathcal{N}_{5}$: one is a 14-dimensional locus parametrizing 6 points on a conic, the second locus is a $\P^1$-bundle over a 12-dimensional locus parametrizing 5 points on a line, and the third locus is a $\P^2$-bundle over an 11-dimensional locus parametrizing 6 points on a line.

\end{rmk}

\begin{rmk}
The exceptional locus in $\NN_{2}$ is a $\mathbb{P}^{12}$-bundle over  ${\mathcal{H}ilb^{2t+1}(\mathbb{P}^{3})} \times (\mathbb{P}^{3})^{\vee}$ which generically parametrizes union of a plane quartic and a conic intersecting in two points (see Proposition  \ref{N2} and the result in \cite{R1}).
\end{rmk}


\begin{exmp} We give examples of two objects in the space of PT stable pairs, such that when we move from the large volume limit to the empty set on the right side of the hyperbola, those objects get destabilized at the walls
   $\langle(\mathcal{O}(-1) \rightarrow \mathcal{O}_{L}(1)), \mathcal{O}_{P}(-5)\rangle$
   and $\langle(\mathcal{O}(-1) \rightarrow \mathcal{O}_{L}),\iota_{P_{*}}\mathcal{I^{\vee}}_{Z_{1}}(-5)\rangle$. 
   
   First we consider $E:=\mathcal{O} \rightarrow \mathcal{O}_{L}(1) \oplus \mathcal{O}_{C_5}$ which is defined by a section of the object $\mathcal{O}_{L}(1) \oplus \mathcal{O}_{C_5}$, for $C_5$ a plane quintic.   We claim $\mathcal{O}(-1) \rightarrow \mathcal{O}_{L}(1)$ destabilizes $E$ as follows (the right column comes from the section $\OO \to \OO_L(2)$)
 \begin{center}
\begin{tikzcd}[column sep=
scriptsize
]
 \mathcal{O}(-1) \arrow[rr, hookrightarrow] \arrow[d, ] && \mathcal{O}\arrow[d, ] \arrow[rr, twoheadrightarrow] && \mathcal{O}_{P} \arrow[d, ]

\\
 \mathcal{O}_{L}(1) \arrow[rr, hookrightarrow]  &&  \mathcal{O}_{L}(1) \oplus \mathcal{O}_{C_5}\arrow[rr, twoheadrightarrow] &&   \mathcal{O}_{C_5} 
\end{tikzcd}
\end{center}
or 
$$(\mathcal{O}(-1) \rightarrow \mathcal{O}_{L}(1)) \hookrightarrow E \twoheadrightarrow  \mathcal{O}_{P}(-5).$$
   Now we consider  $E' :=\mathcal{O} \rightarrow \mathcal{O}_{L} \oplus \mathcal{O}_{C_5}(a)$ which is defined by a section of the object $ \mathcal{O}_{L} \oplus \mathcal{O}_{C_5}(a)$, for $C_5$ a plane quintic and $a \in C_5$.   We claim that  $\mathcal{O}(-1) \rightarrow \mathcal{O}_{L}$ destabilizes $E'$ as follows (the right column comes from the section $\OO \to \OO_L(1)$)
 \begin{center}
    
\begin{tikzcd}[column sep=
scriptsize
]
 \mathcal{O}(-1) \arrow[rr, hookrightarrow] \arrow[d, ] && \mathcal{O}\arrow[d, ] \arrow[rr, twoheadrightarrow] && \mathcal{O}_{P} \arrow[d, ]

\\
 \mathcal{O}_{L} \arrow[rr, hookrightarrow]  &&  \mathcal{O}_{L} \oplus \mathcal{O}_{C_5}(1)\arrow[rr, twoheadrightarrow] &&   \mathcal{O}_{C_5}(1)

\end{tikzcd}
\end{center}
or 
$$(\mathcal{O}(-1) \rightarrow \mathcal{O}_{L}) \hookrightarrow E' \twoheadrightarrow  \iota_{P_{*}}\mathcal{I^{\vee}}_{Z_{1}}(-5),$$
as $\HH^0(\mathcal{O}_{P} \rightarrow \mathcal{O}_{C_5}(1))=\HH^0(\iota_{P_{*}}\mathcal{I^{\vee}}_{Z_{1}}(-5))= \mathcal{O}_{P}(-5)$ and on the other hand,  $\HH^1(\mathcal{O}_{P} \rightarrow \mathcal{O}_{C_5}(1))=\HH^1(\iota_{P_{*}}\mathcal{I^{\vee}}_{Z_{1}}(-5))= \mathcal{O}_{Z_1}$. 

\end{exmp}

\section{The Hilbert Scheme} \label{The Hilbert Scheme}
In this section, after describing the DT/PT wall and crossing it, we describe the Hilbert scheme ${\mathcal{H}ilb^{6t-3}(\mathbb{P}^{3})}$. We note that using results in \cite{JM2019},  we can interpret the DT/PT wall-crossing formula as a wall-crossing in the Bridgeland stability space (similar statements for different types of stability conditions can be found in {\cite[Section 6]{Bayer09polynomial}} and {\cite[Section 4.3]{Toda09limit}}).

\begin{prop}

 \label{genusbound}
Let $C \subset \mathbb{P}^{3}$ be a curve of degree $6$. Then we have $g_{arith}(C) \leq 10$.
\end{prop}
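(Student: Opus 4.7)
The plan is to distinguish two cases according to whether $C$ is contained in a plane.

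\textbf{Planar case.} If $C \subset P$ for some plane $P \cong \mathbb{P}^2$, then $C$ is an effective degree-$6$ divisor on $P$, and the adjunction formula $\omega_C \cong \omega_P(C)|_C = \OO_C(3)$ gives $\deg \omega_C = 18 = 2 p_a(C) - 2$, hence $p_a(C) = \binom{5}{2} = 10$, and the bound is achieved with equality.

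\textbf{Non-planar case.} If $h^0(\mathcal{I}_C(1)) = 0$, I aim to show $p_a(C) < 10$. First I reduce to the pure $1$-dimensional case: any embedded $0$-dimensional component contributes positively to $\chi(\OO_C)$ via $0 \to K \to \OO_C \to \OO_{C_{\mathrm{pure}}} \to 0$ with $K$ of positive length, so $p_a(C) \leq p_a(C_{\mathrm{pure}})$, and I may assume $C$ is pure of degree $6$. Then I bound $p_a(C)$ by combining Castelnuovo's bound $p_a(C_i) \leq \binom{\deg C_i - 1}{2}$ on each irreducible component (with equality only when $C_i$ is planar) together with the additivity formula
\[
p_a(C_1 \cup C_2) = p_a(C_1) + p_a(C_2) + \mathrm{length}(C_1 \cap C_2) - 1,
\]
followed by a finite case analysis over the partitions $6 = \sum m_i \deg C_i$ encoding multiplicities and degrees of irreducible components, using that non-degeneracy forces at least one component to escape any fixed plane and hence limits intersection lengths.

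\textbf{Main obstacle.} The delicate part is the case analysis for non-reduced components, where the arithmetic genus of a thickening of a lower-degree curve depends on its embedded structure (conormal direction), not just the underlying reduced curve. A more uniform approach, which I would use as a backup, is via a general hyperplane section $\Gamma = C \cap H$, a length-$6$ subscheme of $H \cong \mathbb{P}^2$: Macaulay's bound constrains the $h$-vector of $\Gamma$, and a standard computation shows that the maximum $p_a(C)$ over admissible $h$-vectors is attained precisely when $\Gamma$ consists of $6$ collinear points ($h$-vector $(1,1,1,1,1,1)$), i.e., when $C$ is a plane sextic. This simultaneously yields the bound and characterizes the extremal configurations, at the cost of invoking ACM/Rao module machinery in the non-Cohen--Macaulay range.
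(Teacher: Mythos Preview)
The paper's own proof is a single sentence: ``This is obtained from Castelnuovo inequality.'' In other words, the authors are simply invoking the classical fact that a degree-$d$ curve in projective space satisfies $p_a \le \binom{d-1}{2}$, with equality exactly for plane curves; for $d=6$ this gives $10$. You are therefore doing far more work than the paper intends, essentially attempting to re-prove the cited inequality from scratch.

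Your planar case is clean and correct. Your primary non-planar argument---reducing to the pure case and then running a component-by-component analysis using the additivity formula for $p_a$ together with Castelnuovo on each piece---has a genuine gap, which you yourself flag: for non-reduced components (multiple structures on lower-degree curves) the arithmetic genus depends on the infinitesimal embedding data, and neither the per-component Castelnuovo bound nor the intersection-length bookkeeping is straightforward to make uniform. This approach can be pushed through, but the case analysis is unpleasant and you have not carried it out.

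Your backup via a general hyperplane section and the $h$-vector of $\Gamma = C \cap H$ is exactly the standard route to Castelnuovo-type bounds (this is the Gruson--Peskine/Hartshorne technique), and it does work once $C$ is assumed locally Cohen--Macaulay---which your reduction to the pure case provides. If you want a self-contained proof, flesh out this backup and drop the component analysis entirely. If you are happy to cite, then one line (as in the paper) suffices; note that later in the same section the paper separately invokes Hartshorne's sharper bound $g \le (d-2)(d-3)/2$ for non-planar locally CM curves, which is the refined statement your non-planar discussion is circling.
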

\begin{proof}
 This is obtained from Castelnuovo inequality.
\end{proof}
This means that a curve of Hilbert polynomial $6t-3$ has an associated Cohen-Macaulay curve of degree 6 and arithmetic genus $g_{arith}$ satisfying $4 \leq g_{arith}\leq 10$, and has $g_{arith}-4$ floating or embedded points.
\\\\
\noindent
\textbf{Description of the hyperbola as an actual wall.} Recall that the hyperbola is given by $\Imm (Z_{\alpha, \beta, s})=0$, and so  to describe the hyperbola as an actual wall, we need to find some objects $E$ such that $\Imm (Z_{\alpha, \beta, s}\mathrm{ch}(E))=0$. 

\begin{lem} \label{DT/PT}
As an actual wall, the hyperbola $\Imm (Z_{\alpha, \beta, s})=0$ is given by  $\langle\mathcal{I}_{C_{i}}, \mathcal{T}_{i}[-1]\rangle$ for $1 \leq i\leq 6$, \textcolor{red}{} where $C_{i}$ is a Cohen-Macaulay curve of degree 6 and  genus $4+i$,  and $\mathcal{T}_{i}$ is a torsion sheaf of length $i$.
\end{lem}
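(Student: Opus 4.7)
The plan is to classify all possible Jordan--H\"older decompositions on $\H$ of a strictly semistable object of class $v = (1,0,-6,15)$ by first extracting numerical constraints from $\Imm Z_{\alpha,\beta,s} = 0$ and then identifying the resulting factors geometrically. I would begin with the observation that, by Remark~\ref{remJH}, each JH factor $F$ is stable along the entire wall and so must satisfy $\Imm Z_{\alpha,\beta,s}(F) = 0$ identically on $\H$. Writing $\mathrm{ch}(F) = (r, c_1, c_2, c_3)$, the condition $\mathrm{ch}_2^{\beta}(F) - (\alpha^2/2)\mathrm{ch}_0^{\beta}(F) = 0$ expands to $c_2 - \beta c_1 + 6r = 0$ on $\H\colon \beta^2 - \alpha^2 = 12$, which must hold for a range of $\beta$ and therefore forces $c_1 = 0$ and $c_2 = -6r$.

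Next, using $\sum_j r_j = 1$, I would argue that exactly one factor has rank $1$ and the rest have rank $0$. Rank-$0$ factors are automatically zero-dimensional and, on the PT side of $\H$, appear as $\TT[-1]$ for a zero-dimensional sheaf $\TT$ (the shift being dictated by the tilted heart $\mathrm{Coh}^{\alpha,\beta}(\P^3)[-1]$ on that side of the hyperbola). To rule out factors of rank $r \geq 2$, I would use the BMT inequality (Theorem~\ref{BMT}): a direct computation on $\H$ for a candidate of class $(r, 0, -6r, c_3)$ reduces BMT to $r\beta(4r\beta + c_3) \geq 0$. Combined with the fact that such a factor would force another JH factor of negative rank (since $\sum r_j = 1$), which itself must satisfy an incompatible BMT constraint, this excludes all decompositions containing a factor of rank $\geq 2$.

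For the rank-$1$ factor $F$ of class $(1, 0, -6, c_3)$, Lemma~\ref{BMS8.9}(b) together with the vanishing $\Hom(\OO_p[-1], F) = 0$ coming from $\lambda$-stability forces $F$ to be an ideal sheaf $\II_C$ of a pure one-dimensional subscheme of degree $6$ without embedded or isolated points, i.e.\ of a Cohen--Macaulay curve $C$. Using $\mathrm{ch}(\II_C) = (1, 0, -6, 11 + g)$ for arithmetic genus $g$ (as in the computation preceding Proposition~\ref{prop:prop_1}), together with $\mathrm{ch}(\TT[-1]) = (0, 0, 0, -i)$ for $\TT$ of length $i$, matching $\mathrm{ch}(\II_C) + \mathrm{ch}(\TT[-1]) = v$ yields $g = 4 + i$. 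The Castelnuovo bound (Proposition~\ref{genusbound}) then forces $g \leq 10$, whence $i \leq 6$, while $i \geq 1$ is precisely the condition for a nontrivial wall. Existence of a genuine semistable object for each pair $(\II_{C_i}, \TT_i)$ is guaranteed by the nonvanishing of $\Hom(\II_{C_i}, \TT_i)$ (take $\TT_i$ supported on $C_i$), producing the DT-side extension $\TT_i[-1] \to \II_Z \to \II_{C_i}$ whose middle term is semistable on the wall.

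The main obstacle is the exclusion of higher-rank JH factors in the second step: while the numerical constraint $\mathrm{ch}(F) = (r, 0, -6r, c_3)$ is immediate, ruling out stable factors with $r \geq 2$ via the BMT inequality together with a careful argument about how they could combine with the necessary compensating factors of negative rank (in the tilted heart $\mathrm{Coh}^{\alpha,\beta}(\P^3)[-1]$) is the most delicate part of the proof.
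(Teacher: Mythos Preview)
Your outline is workable, but it takes a considerably longer route than the paper and the step you flag as the ``main obstacle'' is one the paper avoids entirely.

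The paper's argument is a one-line application of Lemma~\ref{BMS8.9} to $E[1]$ itself, rather than to the individual Jordan--H\"older factors. The point is that on $\H$ the quantity $\Imm Z_{\alpha,\beta,s}$ does not depend on $s$, so an object semistable on $\H$ for one value of $s$ is semistable for all $s$; in particular one may let $s\to\infty$ and invoke Lemma~\ref{BMS8.9}(b). This immediately yields that $\HH^0_\beta(E)$ is $\nu_{\alpha,\beta}$-semistable of class $(1,0,-6,15+i)$ and $\HH^1_\beta(E)$ is a length-$i$ zero-dimensional sheaf, so the destabilising filtration is $\HH^0_\beta(E)\to E\to \HH^1_\beta(E)[-1]$ with no further case analysis. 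The Cohen--Macaulay property is then a short stability argument: a zero-dimensional subsheaf $T\subset\OO_C$ would give $T[-1]\hookrightarrow\II_C$, contradicting stability of $\II_C$ on $\H$. The bound $i\le 6$ comes from Proposition~\ref{genusbound} exactly as you say.

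By contrast, you classify JH factors numerically and then try to eliminate those of rank $\ge 2$ via the BMT inequality. Two remarks on this. First, Theorem~\ref{BMT} is stated for $\nu_{\alpha,\beta}$-semistable objects, whereas your JH factors are a priori only $\lambda_{\alpha,\beta,s}$-stable; to apply BMT you would first need the same $s\to\infty$ argument through Lemma~\ref{BMS8.9} that you already use for the rank-one factor, and then apply BMT to $\HH^0_\beta(F)$ rather than to $F$. Second, even once that is in place, the inequality $r\beta(4r\beta+c_3)\ge 0$ on $\H$ only gives $c_3\le 8\sqrt{3}\,r$, which does not by itself exclude $r\ge 2$; you then need the compensating negative-rank factor and a matching inequality in the other direction, which is exactly the delicate step you anticipate. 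The paper's approach simply never meets this difficulty: once Lemma~\ref{BMS8.9} is applied to $E[1]$, the two-step filtration is forced and no higher-rank candidates arise.
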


\begin{proof}
For objects with $\Imm (Z_{\alpha, \beta, s}\mathrm{ch}(E))=0$, semistability does not change as $s$ varies; in particular, we can let $s \to +\infty$ and apply Lemma \ref{BMS8.9} to $E[1]$. Therefore, $\HH^0_\beta(E)$ is $\nu_{\alpha, \beta}$-semistable, and $\HH^1_\beta(E)$ is a torsion sheaf $\TT_{i}$ of length $i$. Notice that we have $\ch(\HH^0_\beta(E))=(1,0,-6,15+i)$, which is the Chern class of the ideal sheaf of a genus 4+i sextic curve $C_{i}$. By Proposition \ref{genusbound}, the length cannot be more than 6. Notice that $C_i$ is a Cohen-Macaulay curve: Otherwise, there would be a 0-dimensional subsheaf $T$ of $\OO_{C_i}$, which induces an injection $T[-1] \into \II_{C_i}$. But $\II_{C_i}$ is stable; so this is a contradiction.

\end{proof}

\begin{thm} [{\cite[Theorem 3.3]{Harts}}] \label{Hartshormethm}
Let $C$ be a locally Cohen-Macaulay curve in $\mathbb{P}^{3}$, of degree
$d \geq 3$, which is not contained in a plane. Then
$$g(C)\leq \frac{(d-2)(d-3)}{2}.$$
\end{thm}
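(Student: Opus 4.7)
The plan is to follow Castelnuovo's classical hyperplane-section argument, adapted to the locally Cohen--Macaulay setting. First I would cut $C$ by a general plane $H \subset \P^3$ and set $\Gamma = C \cap H$, a length-$d$ zero-dimensional subscheme of $H \cong \P^2$. The local Cohen--Macaulay hypothesis on $C$ ensures that multiplication by the linear form defining $H$ is injective on the homogeneous coordinate ring $S_C = \bigoplus_{n \geq 0} H^0(\OO_C(n))$, so the short exact sequence of graded modules $0 \to S_C(-1) \to S_C \to S_\Gamma \to 0$ yields
\[ h_C(n) - h_C(n-1) = h_\Gamma(n), \qquad n \geq 1, \]
where $h_C,h_\Gamma$ are the respective Hilbert functions. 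Combining this with $h_C(0)=1$ (assuming $C$ is connected; the disconnected case only gives smaller genus) and with the identity $h_C(n) = dn + 1 - g$ for $n \gg 0$, summation produces the key formula
\[ g \;=\; \sum_{k \geq 1} \bigl(d - h_\Gamma(k)\bigr). \]

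Next I would use the non-planarity of $C$ to show that for generic $H$, $\Gamma$ is \emph{not} contained in a line of $H$. If it were, then as $H$ varies the ``section-lines'' $\ell_H$ would sweep out a surface $V$ containing $C$, and an elementary dimension/degree analysis forces $V$ to be a plane, contradicting the hypothesis. With $\Gamma$ non-degenerate in hand, I would invoke the classical Castelnuovo-type lower bound
\[ h_\Gamma(k) \;\geq\; \min(d,\,k+2) \qquad \text{for all } k \geq 1, \]
valid for any length-$d$ zero-dimensional subscheme of $\P^2$ not lying on a line. Substituting this into the genus formula gives
\[ g \;\leq\; \sum_{k=1}^{d-3} \bigl(d-k-2\bigr) \;=\; \sum_{j=1}^{d-3} j \;=\; \frac{(d-2)(d-3)}{2}, \]
which is the desired inequality. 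The bound is in fact sharp: equality is realised by the union of a plane curve of degree $d-1$ with a line meeting it transversally in a single point.

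The principal technical ingredient is the Castelnuovo bound $h_\Gamma(k) \geq \min(d,k+2)$; the standard proof proceeds by induction on $d$, residuating $\Gamma$ against a generic line meeting it in at least two points and applying the inductive hypothesis to the residual scheme. A secondary subtlety is the non-degeneracy step: while it follows immediately from the uniform position principle when $C$ is reduced, for non-reduced locally Cohen--Macaulay curves a separate argument is required to exclude the pathological configuration in which every general plane section lies on a line.
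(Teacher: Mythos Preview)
The paper does not prove this theorem at all; it is quoted verbatim from Hartshorne's paper as \cite[Theorem 3.3]{Harts} and used as a black box to deduce Corollary~\ref{newgenera}. So there is no ``paper's own proof'' to compare against.

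Your sketch is the right overall strategy and is essentially Hartshorne's argument, but a couple of the steps as written do not quite work and would need repair. First, the identity $h_C(n)-h_C(n-1)=h_\Gamma(n)$ is not a consequence of the exactness of $0\to S_C(-1)\to S_C\to S_\Gamma\to 0$ for the section ring $S_C=\bigoplus_n H^0(\OO_C(n))$: the restriction map $H^0(\OO_C(n))\to H^0(\OO_\Gamma(n))$ need not be surjective, so you only get an inequality, and the $H^1$ terms must be tracked. The clean way is to work instead with the homogeneous coordinate ring (the image of $H^0(\OO_{\P^3}(n))$ in $H^0(\OO_C(n))$) or, equivalently, to bound $g$ via $g\le\sum_{k\ge 1}(d-h_\Gamma(k))$ using the long exact sequence rather than asserting equality. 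Second, your parenthetical ``the disconnected case only gives smaller genus'' is correct in spirit (disconnectedness lowers $p_a$), but your formula uses $h_C(0)=1$, so you should either reduce to the connected case explicitly or rewrite the summation without that normalisation. Finally, you correctly flag that the non-degeneracy of the general plane section is delicate for non-reduced locally Cohen--Macaulay curves; Hartshorne handles this by a direct argument on the Hilbert function rather than invoking uniform position, and you would need to supply something similar.
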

As a result, if we only consider the underlying curves in the corresponding components in the Hilbert scheme, we have:

\begin{cor} \label{newgenera} The generic element parametrized by the new components of the Hilbert scheme $\mathcal{H}ilb^{6t-3}(\mathbb{P}^{3})$ after crossing the hyperbola can only have the genera 5,6, and 10.
\end{cor}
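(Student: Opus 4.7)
The plan is to combine Lemma \ref{DT/PT} (which enumerates the Cohen-Macaulay curves produced by the DT/PT wall crossing) with the classical space-curve genus bound in Theorem \ref{Hartshormethm}. First, I would invoke Lemma \ref{DT/PT} to note that any new component of the Hilbert scheme created by crossing the hyperbola is associated to a destabilizing sequence $\mathcal{I}_{C_i} \to E \to \mathcal{T}_i[-1]$, where $C_i$ is a CM curve of degree $6$ and arithmetic genus $4+i$ with $1 \leq i \leq 6$. So a priori the underlying CM genus of the generic element of any such new component lies in $\{5,6,7,8,9,10\}$.

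Next I would run a dichotomy depending on whether $C_i$ lies in a plane. If $C_i \subset P$ for some plane $P \subset \mathbb{P}^{3}$, then $C_i$ is a plane sextic, which forces $g_{\mathrm{arith}}(C_i) = \binom{5}{2} = 10$; this accounts for the case $i=6$. If $C_i$ is not contained in any plane, then Theorem \ref{Hartshormethm} applies with $d=6$ to give $g_{\mathrm{arith}}(C_i) \leq \tfrac{(6-2)(6-3)}{2} = 6$, leaving only $g_{\mathrm{arith}}(C_i) \in \{5,6\}$, i.e. $i \in \{1,2\}$. Putting the two cases together, the intermediate values $g \in \{7,8,9\}$ are excluded: they are too large for any non-planar CM degree-$6$ curve by Hartshorne's bound, and they cannot be attained by a planar sextic either (whose arithmetic genus is rigidly $10$). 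Hence the only admissible genera are $5$, $6$, and $10$, as claimed.

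There is no deep obstacle here; the argument is essentially a one-line synthesis of the two quoted inputs. The only point where care is required — and where I would be explicit — is in correctly identifying the object to which Hartshorne's bound is applied: the statement concerns the arithmetic genus of the \emph{underlying Cohen-Macaulay curve} obtained after stripping floating and embedded points (as set up in the discussion following Proposition \ref{genusbound}), not the arithmetic genus of the full subscheme, which is fixed at $4$ by the Hilbert polynomial $6t-3$. Once this is spelled out, the case analysis above closes the proof immediately.
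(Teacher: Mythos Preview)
Your proposal is correct and follows essentially the same approach as the paper: a planar/non-planar dichotomy, with the plane sextic giving $g=10$ and Theorem \ref{Hartshormethm} bounding the non-planar case by $g\leq 6$, leaving only $g\in\{5,6\}$. The paper's proof is terser (it does not re-invoke Lemma \ref{DT/PT} or spell out the floating-point subtlety), but the mathematical content is identical.
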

\begin{proof}
If $C$ is planar, then $g(C)=10$. Otherwise, by Theorem \ref{Hartshormethm} we have $g(C)\leq \frac{4.3}{2}=6$. Hence we just have the genera 5,6 for non-planar curves, and 10 for planar curves.
\end{proof}

Finally, we give a description of the Hilbert scheme and prove Theorem \ref{HScomp}:
\begin{thm} \label{HScomp1}
The Hilbert scheme ${\mathcal{H}ilb^{6t-3}(\mathbb{P}^{3})}$ has  components 
birational to:

1)  The main component, $\mathscr{H}_{CM}$, which is a $\mathbb{P}^{15}$-bundle over ${|\OO(2)|}$ (24-dimensional),

2) $\mathscr{H}'_{CM}$ which generically parametrizes the union of a plane quartic with a thickening of a line in the plane (28-dimensional),

3) $\mathscr{H}_1$ which generically parametrizes the disjoint union of a line in $\P^3$ and a plane quintic together with 1 floating point (30-dimensional),

4) $\mathscr{H}_2$ which generically parametrizes the union of a line in $\P^3$ and a plane quintic together with 2 floating points, and

5) $\mathscr{H}_6$ which generically parametrizes a plane sextic together with 6 floating points.
 \\\\
The first three components are irreducible.  

\end{thm}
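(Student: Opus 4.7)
The plan is to cross the DT/PT wall on the hyperbola $\mathcal{H}$ starting from the moduli of stable pairs described in Theorem \ref{moduliPT1}. By Lemma \ref{DT/PT}, this wall decomposes as $\bigcup_{i=1}^{6} \langle \mathcal{I}_{C_i}, \mathcal{T}_i[-1] \rangle$, and crossing it exchanges the roles of the JH factors: a PT stable pair $E$ with $\HH^0(E) = \mathcal{I}_C$ and $\HH^1(E) = \mathcal{T}$ becomes an ideal sheaf $\mathcal{I}_Y \in \Ext^1(\mathcal{I}_C, \mathcal{T}[-1])$, where $Y$ has Cohen--Macaulay part $C$ together with either embedded or floating points, depending on whether the extension class is supported on $C$ or not. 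Concretely, floating extensions correspond to geometric points of $\P^3$ off $C$, while extensions supported on $C$ leave the points embedded into the curve.

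Next I would classify the eight PT components of Theorem \ref{moduliPT1} by their behavior under this wall-crossing. Components (1) and (7) of the PT list already consist of ideal sheaves of Cohen--Macaulay sextics of arithmetic genus $4$ (via the $\HH^1 = 0$ argument in Propositions \ref{N1} and \ref{N5}), so they pass unchanged to the Hilbert scheme as $\mathscr{H}_{CM}$ and $\mathscr{H}'_{CM}$. For the remaining components, I would compute the relevant $\Ext^1(\mathcal{I}_C, \mathcal{T}[-1])$ and identify the floating direction; a general class in this direction has the points off the curve in $\P^3$, yielding the following correspondence. PT components (4) and (6) (disjoint line and plane quintic, with one point chosen on either piece) both sit as sub-loci of a single new $30$-dimensional component $\mathscr{H}_1$ with the point floating freely in $\P^3$, matching dimension $4 + 23 + 3 = 30$. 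PT components (2), (3), (5) (line meeting a plane quintic, with two points distributed variously between them) collapse into a single new component $\mathscr{H}_2$ with two freely floating points, of expected dimension $4 + 23 - 1 + 6 = 32$. Finally PT component (8) produces $\mathscr{H}_6$ (plane sextic with six freely floating points) of expected dimension $3 + 27 + 18 = 48$. The arithmetic genera $5, 6, 10$ of the underlying Cohen--Macaulay parts in $\mathscr{H}_1, \mathscr{H}_2, \mathscr{H}_6$ respectively agree with Corollary \ref{newgenera}.

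For irreducibility of the first three components: $\mathscr{H}_{CM}$ and $\mathscr{H}'_{CM}$ are projective bundles over the irreducible bases $|\mathcal{O}(2)|$ and $\mathfrak{Fl}_2$, hence irreducible. For $\mathscr{H}_1$, the base parametrizing (line in $\P^3$, plane quintic, floating point) is a product of irreducible varieties, and $\Ext^1$ is of generically constant dimension, so a standard projectivization argument yields irreducibility of the total space. The main obstacle will be the collapse argument for case (ii): verifying that the three PT components (2), (3), (5) really all lie in the closure of a single irreducible Hilbert component $\mathscr{H}_2$, rather than giving rise to separate components. This requires analyzing the adjacency of the PT strata (as partially carried out in Proposition \ref{N4}) and checking via the wall-crossing triangle that the corresponding DT-side extensions deform into one another while preserving the Hilbert polynomial. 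The same subtlety, together with potential higher-dimensional embedded configurations in the plane sextic case, explains the absence of proved irreducibility for $\mathscr{H}_2$ and $\mathscr{H}_6$ in the theorem statement.
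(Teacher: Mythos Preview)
Your overall strategy---crossing the DT/PT wall from the PT side using Lemma \ref{DT/PT}, identifying components (1) and (7) of Theorem \ref{moduliPT1} as surviving unchanged because their generic objects are already ideal sheaves, and grouping the remaining six PT components according to the arithmetic genus of the underlying Cohen--Macaulay curve---is the same as the paper's. Your grouping of PT components into $\mathscr{H}_1$, $\mathscr{H}_2$, $\mathscr{H}_6$ is in fact more explicit than what the paper writes down, and is not needed for the theorem as stated: since irreducibility of $\mathscr{H}_2$ and $\mathscr{H}_6$ is not asserted, one does not have to verify that PT components (2), (3), (5) collapse into a single Hilbert component.

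There is, however, a genuine gap in your irreducibility argument for $\mathscr{H}_1$. The relevant projectivization is that of $\Hom(\mathcal{I}_C,\mathcal{O}_p)$ over pairs $(C,p)$ with $C$ a disjoint union of a line and a plane quintic; this has rank $1$ for $p\notin C$ but rank $2$ for $p$ a smooth point of $C$, so a ``standard projectivization argument'' does not apply---a projectivization of a sheaf whose rank jumps need not be irreducible. What is actually at stake is whether every configuration with an \emph{embedded} point lies in the closure of the locus where the point is floating. The paper settles this by invoking \cite[Theorem 1.3]{CN12}: since the underlying curve is a locally complete intersection of codimension $2$, any single embedded point can be detached. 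Without this (or an equivalent) input, the irreducibility of $\mathscr{H}_1$ is not established.
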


\begin{proof}
Theorem  \ref{moduliPT1} describes the eight components of the space of stable pairs. We notice that crossing the DT/PT wall (described in Lemma \ref{DT/PT}) from the right side to the left side changes the heart from $Coh^{ \alpha, \beta}(\mathbb{P}^{3})$ to $Coh^{ \alpha, \beta}(\mathbb{P}^{3})[-1]$. The component $\mathscr{H}_{CM}$ (which is birational to the main component) and the component $\mathscr{H}'_{CM}$  (which is birational to $\mathcal{N}'''_{5}$) survive after crossing this wall: we just need to check this for one object in each; let $\II$ be either an ideal sheaf of a (2,3)-complete intersections or an ideal sheaf of the union of a plane quartic with a thickening of a line in the plane (see Propositions \ref{N1} and \ref{N5}). We have $\Hom(\II_{C_i}, \II)=0$, for $C_i$ a Cohen-Macaulay curve of degree 6 and genus $4+i$, where $1\leq i \leq 6$. Therefore $\II$ cannot get destabilized at this wall. 

We notice that apart from these two components of genus 4 Cohen-Macaulay curves, other six components in the space of stable pairs are destabilized once we cross the DT/PT wall: each object $E$ in those components has $\HH^1(E) \neq 0$ (see Propositions \ref{N3}, \ref{N4}, \ref{N5} and \ref{N6}), and the short exact sequence $\HH^0(E) \rightarrow E \rightarrow \HH^1(E)[-1] $  destabilizes any object in those components. 

Note that the underlying Cohen-Macaulay curves which are parametrized by the generic elements of the components remain the same in both the Hilbert scheme and the space of stable pairs (as the support of $\FF$, for a given stable pair $(\FF,s)$). Therefore, the new components in the Hilbert scheme generically parametrize ideal sheaves of curves with underlying Cohen-Macaulay curves either the disjoint union  of a line in $\P^3$ and a plane quintic, or the union of a line in $\P^3$ and a plane quintic intersecting the line, or a plane sextic. They have 1,2, or 6 floating/embedded points, respectively (see Corollary  \ref{newgenera}). The dimension of $\mathscr{H}_1$ is easily obtained by counting the underlying curves with a choice of one floating points.

The dimensions and irreducibility of  $\mathscr{H}_{CM}$ and $\mathscr{H}'_{CM}$ are clear from the stable pairs side. For $\mathscr{H}_{1}$, there is only one point involved and the underlying curve is a disjoint union of a line and a plane quintic which is a locally complete intersection curve of codimension 2; hence {\cite[Theorem 1.3]{CN12}} implies that we can always detach the point from the underlying curve in this case. Therefore $\mathscr{H}_{1}$ is irreducible. 

\end{proof}

$$$$

\typeout{}
\bibliographystyle{alphaspecial}
\bibliography{sample}

\end{document}